\DeclareMathOperator\coker{coker}
\newcommand{\bbC}{{\mathbb C}}
\newcommand{\bbZ}{{\mathbb Z}}
\newcommand{\cO}{{\mathcal O}}
\newcommand{\an}{\mathrm{an}}
\newcommand{\tor}{\mathrm{tor}}
\newcommand{\Spec}{\mathrm{Spec}}
\newcommand{\Supp}{\mathrm{Supp}}
\newcommand{\Hom}{\mathrm{Hom}}
\newcommand{\Ext}{\mathrm{Ext}}
\newcommand{\coev}{\mathrm{coev}}
\newcommand{\ev}{\mathrm{ev}}
\newcommand{\id}{\mathrm{id}}
\newcommand{\free}{\mathrm{free}}
\newcommand{\tf}{\mathrm{tf}}
\newcommand{\fd}{\mathrm{fd}}
\newcommand{\rk}{\mathrm{rk}}
\newcommand{\Pic}{\mathrm{Pic}}
\newcommand{\coh}{\mathrm{Coh}}
\newcommand{\Auteq}{\mathrm{Auteq}}
\newcommand{\Aut}{\mathrm{Aut}}
\newcommand{\GL}{\mathrm{GL}}
\newcommand{\tmod}{\mathrm{mod}}
\newtheorem{thm}{Theorem}[section]
\newtheorem{example}[thm]{Example}
\newtheorem{theorem}[thm]{Theorem}
\newtheorem{cor}[thm]{Corollary}
\newtheorem{prop}[thm]{Proposition}
\newtheorem{lemma}[thm]{Lemma}
\newtheorem{conj}[thm]{Conjecture}
\newtheorem{remark}[thm]{Remark}
\newtheorem{defn}[thm]{Definition}
\newtheorem*{claim*}{Claim}
\numberwithin{equation}{section}
\begin{document}

\title{Quantum Elliptic Curves I: Algebraic Case}

\author{Michael Larsen}
\address{Department of Mathematics, Indiana University, Bloomington, IN 47405, USA}
\email{mjlarsen@iu.edu}
\thanks{Michael Larsen was partially supported by NSF grant DMS-2401098 and the Simons Foundation.}

\author{Valery Lunts}
\address{Department of Mathematics, Indiana University, Bloomington, IN 47405, USA\\
National Research University Higher School of Economics, Moscow, Russia}
\email{vlunts@iu.edu}

\begin{abstract} A complex elliptic curve $E$ can be defined as the quotient of the analytic space $\bbC^*$ by a discrete action of the cyclic group $q^{\bbZ}$ for $\vert q\vert \neq 1$. We study the boundary case when $\vert q\vert =1$, which leads to the notion of a {\rm quantum} elliptic curve and a conjectural equivalence of categories that one might call a {\rm noncommutative} GAGA.
\end{abstract}

\maketitle

\section{Introduction}

\subsection{Motivation} Fix $q\in \bbC ^*$ and let $q^{\bbZ}\subset \bbC ^*$ be the cyclic subgroup
generated by $q$. Assume that $\vert q\vert \neq 1$. Then the action of $q^{\bbZ}$ on $\bbC ^*$ is discrete and the quotient $\bbC ^*/q^{\bbZ}$ is an elliptic curve $E_q$. The quotient map
$$\pi :\bbC ^*\to E_q$$
induces an equivalence of categories of analytic coherent sheaves
\begin{equation}\label{first equiv} \pi ^* :\coh ^{\an}_{E_q}\to \coh _{\bbC ^*}^{\an,q^\bbZ},
\end{equation}
where $\coh _{\bbC ^*}^{an, q^\bbZ}$ is the corresponding category of $q^{\bbZ}$-{\it equivariant} analytic coherent sheaves on $\bbC ^*$.
By GAGA, the analytic category $\coh ^{\an}_{E_q}$ is equivalent to the category $\coh _{E_q}$ of algebraic coherent sheaves on $E_q$. Thus the equivalence \eqref{first equiv} combined with the classical GAGA may be considered as the {\it algebraization} of the analytic category $\coh _{\bbC ^*}^{an,q^\bbZ}$:
\begin{equation}\label{second equiv} \coh _{E_q}\to \coh _{\bbC ^*}^{an,q^\bbZ}.
\end{equation}

Let $\cO$ be the algebra of analytic functions on $\bbC^*$. The group $q^{\bbZ}$ acts on $\cO$ and we let  $A_q^{\an}$ be the corresponding crossed product algebra. This algebra consists of non-commutative Laurent polynomials in a variable $\sigma$ over the ring $\cO$  where $\sigma f(z) = f(qz) \sigma$.

In \cite{SV}, Soibelman and  Vologodsky showed that the sheaves in the category $\coh _{\bbC ^*}^{an, q^\bbZ}$ are actually globally coherent and hence this category is equivalent to the category of $A_q^{\an}$-modules which are finitely presented as $\cO$-modules.

When $|q|=1$ and $q$ is not a root of unity, the orbits of $q^{\bbZ}$ on $\bbC^\ast$ are no longer discrete, so the geometric quotient $E_q$ no longer makes sense. But the algebra $A_q^{\an}$ is defined and one can still consider the category of $A_q^{\an}$-modules which are finitely presented as $\cO$-modules. Let us denote this category $\mathcal{M}_q^{\an}$.
Soibelman and  Vologodsky called it the category of (analytic) coherent sheaves on the quantum elliptic curve.

One may ask for an algebraization of the analytic category $\mathcal{M}_q^{\an}$ in case $\vert q\vert =1$ which would be the analogue of the equivalence \eqref{second equiv}. We propose a conjectural answer.
Namely let $A=\bbC [z,z^{-1}]$ be the algebra of Laurent polynomials. The group $q^{\bbZ}$ acts on $A$ and we let $A_q$ be the corresponding crossed product algebra, i.e. $A_q=\langle z,z^{-1},\sigma , \sigma ^{-1}\rangle /(\sigma z=qz\sigma)$. Let $\mathcal{M}_q$ be the abelian category of $A_q$-modules which are finitely generated as $A$-modules. The natural embedding of algebras $A\hookrightarrow \cO$ extends to the embedding
$A_q\hookrightarrow A_q^{\an}$ and we obtain the functor of extension of scalars
\begin{equation}\label{conj equiv}F:\mathcal{M}_q\to \mathcal{M}_q^{\an},\quad M\mapsto \cO \otimes _AM.
\end{equation}

\begin{conj}\label{conj} (i) Suppose that $\vert q\vert =1$ and $q$ is not a root of $1$. Then the functor $F$ is full and faithful.

(ii) Suppose in addition that there exists $L>0$ such that for every $0\neq n\in \bbZ$ we have $\vert q^n-1\vert >L^n$.
Then $F$ is an equivalence of categories.
\end{conj}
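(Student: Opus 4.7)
The plan is to reduce both parts to a statement about $\sigma$-invariants. For $M, N \in \mathcal{M}_q$, set $H := \Hom_A(M,N)$ with its natural semilinear $\sigma$-action; then $\Hom_{\mathcal{M}_q}(M,N) = H^{\sigma}$, and since $M$ is finitely presented and $\cO$ is flat over $A$, we also have $\Hom_{\mathcal{M}_q^{\an}}(FM, FN) = (\cO \otimes_A H)^{\sigma}$. Thus the fully faithful assertion of (i) reduces to showing that the natural map
\[ \iota_H : H^{\sigma} \to (\cO \otimes_A H)^{\sigma} \]
is bijective for every such $H$; part (ii) additionally requires essential surjectivity. Note that $H$ is torsion-free as an $A$-module, since any $\sigma$-stable torsion submodule would be supported on a $q$-invariant finite subset of $\bbG_m$, which is empty because $q$ has infinite order. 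So $H \cong A^r$ with $\sigma$ given by a matrix $U \in \GL_r(A)$ via $\sigma(v)(z) = U(z) v(qz)$, and injectivity of $\iota_H$ is immediate from $A \hookrightarrow \cO$.

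For surjectivity of $\iota_H$, I must show that every $v \in \cO^r$ satisfying $v(z) = U(z) v(qz)$ already lies in $A^r$. Expanding $v = \sum_n v_n z^n$ with $v_n \in \bbC^r$ and $U = \sum_m U_m z^m$ as a finite Laurent sum, the equation becomes the linear recursion
\[ v_n = \sum_m q^{n-m} U_m v_{n-m}, \qquad n \in \bbZ. \]
In the rank one case $U = c z^k \in A^*$, this simplifies to $v_n = c q^{n-k} v_{n-k}$, forcing $|v_{n+mk}| = |c|^m |v_n|$ when $|q|=1$; such a sequence cannot be the Laurent coefficients of a function on $\bbC^*$ unless finitely supported, and then a direct check identifies $H^{\sigma}$ exactly. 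For higher rank I would first attempt to induct on $r$ by finding a $\sigma$-stable $A$-submodule of smaller rank and applying the five-lemma to the long exact sequences of $\sigma$-(co)invariants.

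The main obstacle is that $\mathcal{M}_q$ contains indecomposable objects of arbitrary rank, the analogs of Atiyah's stable vector bundles on an elliptic curve, for which no proper $\sigma$-stable subobject exists and the devissage fails. On these one must analyze the recursion directly: since $q$ is not a root of unity, $\det(I - q^n U_0)$ vanishes for only finitely many $n$, so outside a bounded window one can solve for $v_n$ in terms of the remaining $v_{n-m}$ with $m \neq 0$; the super-geometric decay required by $v \in \cO^r$ should then propagate to force vanishing outside a compact range. The delicate point, and the real heart of part (i), is that for $|q| = 1$ one lacks the automatic exponential contraction available when $|q| \neq 1$, so the argument must work under the bare hypothesis that $q$ is not a root of unity, without any Diophantine input.

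For essential surjectivity in part (ii), given $\mathcal{N} \in \mathcal{M}_q^{\an}$ the same orbit argument shows $\mathcal{N} \cong \cO^r$ with analytic $\sigma$-action given by some $U \in \GL_r(\cO)$, and one must find $P \in \GL_r(\cO)$ with $P(z)^{-1} U(z) P(qz) \in \GL_r(A)$. Constructing $P$ reduces to a $q$-difference cohomological equation whose formal solution is a power series involving denominators of the form $q^n - \lambda$; the Diophantine hypothesis $|q^n - 1| > L^n$ is precisely what is needed to bound these denominators and ensure convergence of $P$ on $\bbC^*$. Combined with full faithfulness from (i), this would give the equivalence of categories in (ii).
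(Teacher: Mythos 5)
This statement is Conjecture~\ref{conj}, which the paper explicitly does not prove: immediately after stating it the authors write ``We will address Conjecture \ref{conj} in the next paper \cite{LaLu}.'' There is therefore no proof in this paper against which your argument can be checked, and your proposal should not be read as reconstructing an argument that is present here.

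As an independent attempt, your reduction is reasonable and is consistent with the machinery that \emph{is} developed in the paper. The identification $\Hom_{\mathcal{M}_q}(M,N) = H^\sigma$ with $H = \Hom_A(M,N)$ carrying the twisted $\sigma$-action is exactly Lemma~\ref{lemma useful later 1}(1) combined with Lemma~\ref{lemma on coh}: $\Hom_{A_q}(M,N) = H^0(\mathcal{H}om(M,N)) = \ker(\sigma-1)$. The observation that $H$ is $A$-free (so $H\cong A^r$ and $\sigma$ is encoded by $U\in \GL_r(A)$) follows from Lemma~\ref{any obj is a-free}. Your rank-one computation is correct: if $U = cz^k$ with $k\neq 0$ and $|q|=1$ then $|v_{n+mk}| = |c|^m |v_n|$, which violates the super-geometric decay a function on $\bbC^*$ must have unless $v_n\equiv 0$ in that residue class; if $k=0$ the equation is $(1 - cq^n)v_n = 0$ and one sees $H^\sigma = (\cO\otimes_A H)^\sigma$ directly. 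All of this is fine.

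The genuine gap is in the general-rank case, and you flag it yourself, which is honest but means the argument is incomplete precisely where the content is. Two specific issues. First, the relation $(I - q^n U_0) v_n = \sum_{m\neq 0} q^{n-m} U_m v_{n-m}$ involves coefficients $v_{n-m}$ for $m$ on \emph{both} sides of zero (since $U\in\GL_r(A)$ is Laurent, not a one-sided power series), so inverting $(I-q^nU_0)$ for $|n|\gg 0$ does not produce a forward or backward recursion, and the claim that ``super-geometric decay propagates to force vanishing outside a compact range'' is not something one gets for free; it is exactly the quantitative estimate that has to be supplied, and the absence of a contraction when $|q|=1$ means this is the entire difficulty. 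Second, in the d\'evissage step you propose to induct on rank by finding $\sigma$-stable $A$-submodules and using five-lemma arguments, but (as you note) there are simple objects of rank $>1$ in $\mathcal{M}_q$ (e.g.\ Lemma~\ref{cor with counter ex}), and on those there is no $\sigma$-stable proper submodule to reduce to; so the d\'evissage has to be done in the abelian category $\mathcal{M}_q$ using the finite-length structure, not by an $A$-module induction, and even then the base cases are the nontrivial simples. For essential surjectivity in (ii), the reduction to solving $P(z)^{-1} U(z) P(qz)\in\GL_r(A)$ with $P\in\GL_r(\cO)$ is the right kind of statement, but again the convergence estimate using the Diophantine hypothesis $|q^n-1|>L^n$ is asserted rather than carried out, and making this precise (including the correct small-divisor estimate for $|q^n-\lambda|$ with $\lambda$ ranging over the relevant eigenvalues, not just $\lambda=1$) is where the actual work lies.

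In short: the framing and reduction are sound, but the analytic core of both (i) and (ii) is sketched rather than proved, and the paper itself offers no proof here to compare against, deferring the whole matter to \cite{LaLu}.
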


One may consider the equivalence \eqref{second equiv} and the conjectural equivalence \eqref{conj equiv} as two instances of {\it noncommutative} GAGA. If $\vert q\vert \neq 1$, then the algebraic structure on $E_q$ plays the role. But in the limit case $\vert q\vert =1$ the algebraic structure on $\bbC ^*$ appears.

We will address Conjecture \ref{conj} in the next paper \cite{LaLu}.

\subsection{The present paper}
In this paper, we study the {\it algebraic} quantum elliptic curve. That is we assume that $q\in \bbC ^*$ is not a root of unity and study the algebra $A_q$. More precisely, we study the corresponding category $\mathcal{M}_q$ which we consider to be the category of (algebraic) {\it coherent sheaves}
on the quantum elliptic curve.

We establish a number of structural properties of the algebra $A_q$ and of the category $\mathcal{M}_q$. In particular we study the cohomology of objects in $\mathcal{M}$. For the {\it structure sheaf $\cO \in \mathcal{M}_q$} we have $h^0(\cO)=h^1(\cO )=1$, as in the case of a usual elliptic curve. Also the Picard group $Pic(\mathcal{M}_q)$ is isomorphic to the quotient $\bbC ^*/q^{\bbZ}$, as in the classical case.

We prove a ``Riemann-Roch'' theorem and a ``Serre duality'' theorem. Both look rather different from their classical counterparts. For instance, a non-trivial line bundle cannot have a non-zero section in the algebraic quantum setting. This is in agreement with the analytical picture.  Indeed, in the classical analytic setting, a non-zero section of
such a line bundle has a Laurent series which is unbounded on both sides because it is formally that of a theta function. Whether such a series can actually converge depends on whether the degree of the line bundle is positive or negative and on whether $|q|<1$ or $|q|>1$. When $|q|=1$, there are no non-zero sections.

For non-trivial line bundles $L\in \mathcal{M}_q$, $h^1(L)$ is the absolute value of the degree of $L$. Thus, ``Serre duality'' (which is no longer a duality theorem but just a comparison of dimensions) compares $h^i(,L)$ to $h^j(L^{-1})$ for $i=j$ rather than $i+j=1$.

We also establish the symmetry property of the Euler form of the category $\mathcal{M}_q$: for $F,G\in \mathcal{M}_q$ we prove that $\chi (F,G)=\chi (G,F)$, where
$$\chi (F,G)=\Ext^0(F,G)-\Ext^1(F,G)$$
We notice that the same symmetry property holds in the category of holonomic modules over the Weyl algebra \cite{Sa}. This underlines the analogy between the Weyl algebra and its ``multiplicative analogue" $A_q$.

Also in contrast with the classical case, every object in $\mathcal{M}_q$ has finite length and there exist simple objects whose rank is greater than $1$.

In the last Section \ref{addit results} we discuss some applications to the category $\mathcal{M}_q$ of some results of A. Elagin \cite{El}

We gratefully acknowledge useful conversations with A.\,Elagin, M.\,Kontsevich, T.\,Stafford and Yu.\,Berest.

\section{The algebra $A_q$}

\subsection{The algebra $A_q$}\label{intro of algebra} Fix $q\in \bbC ^*$ such that  $q^n\neq 1$ for any $0\neq n\in \bbZ$. Let $R = \bbC[F_2]$ denote the complex group algebra of
the free abelian group on generators $\sigma$ and $z$.  Consider the noncommutative algebra
$$A_q:= R / R(\sigma z-qz\sigma)R.$$
It is clear that every group element $g\in F_2$  is congruent modulo $R(\sigma z-qz\sigma)R$ to a monomial of the form $z^m \sigma^n$, and that these monomials are linearly independent over $\bbC$ modulo
$R(\sigma z-qz\sigma)R$.
We therefore think of $A_q$ as the ring of Laurent polynomials in variables $\sigma$ and $z$ whose commutation rule is $\sigma z = qz\sigma$.
It contains the following two  commutative subalgebras:
$$A=\bbC [z,z^{-1}],\quad S=\bbC [\sigma ,\sigma ^{-1}].$$
We denote by $K(A)$ the fraction field of $A$.

Every element $x\in A_q$ can be expressed uniquely either as
$$x=x_m(z)\sigma ^m+x_{m+1}(z)\sigma ^{m+1}+\cdots+x_{n}(z)\sigma ^n,\quad \text{where}\quad x_i(z)\in A$$
or as
$$x=x_k(\sigma )z ^k+x_{k+1}(\sigma )z^{k+1}+\cdots+x_{l}(\sigma )z ^l,\quad \text{where}\quad x_i(\sigma )\in S.$$
Assuming that $x_m(z),x_n(z),x_k(\sigma ),x_l(\sigma)\neq 0$ we call $n-m$ (resp. $l-k$) the $\sigma $-degree of $x$, denoted $\deg _\sigma x$ (resp. the $z$-degree of $x$, denoted $\deg _zx$). These degrees satisfy
$$\deg _\sigma (xy)=\deg _\sigma x+\deg _\sigma y\quad \text{and}
\quad \deg _z (xy)=\deg _z x+\deg _z y.$$

Note that in the above notation we also have
$$x=\sigma ^mx_m(q^{-m}z)+\sigma ^{m+1}x_{m+1}(q^{-m-1}z)+\cdots+\sigma ^nx_n(q^{-n}z).$$

The algebra $A_q$ has a distinguished automorphism of order $4$ (the ``Fourier transform'') given by
$$\phi (z)=\sigma,\quad \phi (\sigma )=z^{-1}.$$

It also has a distinguished anti-automorphism $\epsilon :A_q\to A_q$, defined by $\epsilon (z)=z$ and $\epsilon (\sigma )=\sigma ^{-1}$. So if
$w=\sum _iw_i(z)\sigma ^i$, then
$${}^\epsilon w=\sum _i\sigma ^{-i}w_i(z).$$

The units in the algebra $A_q$ are the monomials $cz^m\sigma ^n$, $m,n\in \bbZ$, $c\in \bbC ^*$.

Given $x,y\in A_q$ we will denote their product either by $xy$ or by $x\cdot y$.

\begin{lemma}\label{lemma list of basic prop} The algebra $A_q$ has the following properties

(1) $A_q$ is a domain (has no zero divisors).

(2) $A_q$ is simple (has no nontrivial two-sided ideals).

(3) $A_q$ is left and right Noetherian.

(4) $A_q$ has global homological dimension 1.
\end{lemma}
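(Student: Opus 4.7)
The four properties all flow from the skew Laurent polynomial structure $A_q = A[\sigma^{\pm 1};\alpha]$, where $\alpha: A \to A$ is the automorphism $z \mapsto qz$, together with the fact that $q$ is not a root of unity (so no power $q^n-1$ vanishes).

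For (1), the plan is to track the lowest $\sigma$-degree term under multiplication. Writing $x = \sum_{i=m}^n x_i(z)\sigma^i$ and $y = \sum_{j=m'}^{n'} y_j(z)\sigma^j$ with the extreme coefficients nonzero, the lowest-degree term of $xy$ is $x_m(z)\alpha^m(y_{m'}(z))\sigma^{m+m'}$, which is nonzero because $A$ is a domain and $\alpha$ is a ring automorphism. For (2), given a nonzero two-sided ideal $I$, the trick is two iterative conjugations. Take $u = \sum_{i=m}^n u_i(z)\sigma^i \in I$ with minimal $\sigma$-degree; then
\[
zuz^{-1} - q^{-m}u = \sum_{i>m}(q^{-i}-q^{-m})\,u_i(z)\,\sigma^i
\]
kills the bottom term while keeping the top term (since $q^{-n} \ne q^{-m}$), hence has strictly smaller $\sigma$-degree. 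By minimality it must be $0$, forcing $u = u_m(z)\sigma^m$; then $u\sigma^{-m} = u_m(z) \in I\cap A$. A symmetric conjugation $\sigma f\sigma^{-1} - q^k f = f(qz) - q^k f(z)$ applied to $f = u_m$ reduces it in the same way to a single monomial $cz^j$, which is a unit, so $I = A_q$. For (3), $A = \bbC[z,z^{-1}]$ is Noetherian, so by the skew version of the Hilbert basis theorem $A[\sigma;\alpha]$ is Noetherian, and $A_q$ is then its Ore localization at the powers of $\sigma$, preserving Noetherianity on both sides.

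The hard part is (4). The plan is to exhibit, for every left $A_q$-module $M$, the canonical crossed-product resolution
\[
0 \longrightarrow A_q\otimes_A M \xrightarrow{\,\partial\,} A_q\otimes_A M \xrightarrow{\,\mu\,} M \longrightarrow 0,
\]
where $\mu(a\otimes m) = am$ and $\partial(a\otimes m) = a\sigma \otimes m - a\otimes \sigma m$. A direct check confirms $A_q$-linearity, $\mu\partial = 0$, surjectivity of $\mu$, exactness at the middle (any $\sum \sigma^n \otimes m_n$ in $\ker\mu$ is equivalent mod $\mathrm{Im}\,\partial$ to $1\otimes\sum\sigma^n m_n = 0$), and injectivity of $\partial$ (by comparing coefficients in the decomposition $A_q\otimes_A M = \bigoplus_n \sigma^n\otimes M$). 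Applying $\Hom_{A_q}(-,N)$ and the Frobenius adjunction $\Ext^i_{A_q}(A_q\otimes_A X, N) = \Ext^i_A(X,N)$ produces the long exact sequence
\[
\cdots \to \Ext^i_A(M,N) \xrightarrow{\sigma^*-1} \Ext^i_A(M,N) \to \Ext^{i+1}_{A_q}(M,N) \to \Ext^{i+1}_A(M,N) \to \cdots,
\]
with the connecting map induced by the $\sigma$-action on $\Ext^*_A$ coming from the $A_q$-structures on $M$ and $N$. Since $\gldim A = 1$, the terms $\Ext^{\geq 2}_A$ vanish, and so $\Ext^2_{A_q}(M,N)$ is the cokernel of $\sigma^*-1$ on $\Ext^1_A(M,N)$.

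The main obstacle is thus to show this cokernel vanishes, i.e.\ that $\sigma^* - 1$ is surjective on $\Ext^1_A(M,N)$. The plan here is to exploit that $\Ext^1_A(M,N)$ is $A$-torsion (as $A$ is a PID), hence decomposes along $\sigma$-invariant supports in $\Spec A = \bbC^*$; because $q$ is not a root of unity, every such support is a disjoint union of infinite $q^{\bbZ}$-orbits, and the resulting $\sigma$-action is a shift on product-like pieces indexed by $\bbZ$, on which $\sigma^*-1$ is surjective by a direct forward/backward recursion (the relevant sums are unconstrained because $\Ext^1_A(\bigoplus M_n, \bigoplus N_m)$ is a product over the first index). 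This gives $\Ext^2_{A_q}(M,N) = 0$ for all $M,N$, so $\gldim A_q \leq 1$; the dimension is exactly $1$ because the cyclic module $A_q/A_q(z-1)$ is not projective (the canonical surjection does not split, since $A_q$ is a domain).
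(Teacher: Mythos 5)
Your arguments for (1)--(3) are correct and self-contained, which is genuine added value: the paper itself only cites the literature here (simplicity is \cite[Thm.~1.17]{GW}, the Noetherian property is \cite[Cor.~1.15]{GW}, and your direct conjugation argument for (2) and the skew Hilbert basis theorem plus Ore localization for (3) are exactly the arguments those references encapsulate). The domain property in (1) is the standard leading-term argument.

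Part (4) is where the real content lies, and your plan has a gap. The reduction is sound: from the resolution $0\to A_q\otimes_A M\to A_q\otimes_A M\to M\to 0$ and the adjunction $\Ext^i_{A_q}(A_q\otimes_A X,N)\cong\Ext^i_A(X,N)$, together with $\gldim A=1$, you correctly obtain $\Ext^2_{A_q}(M,N)\cong\coker\bigl(\tau\colon\Ext^1_A(M,N)\to\Ext^1_A(M,N)\bigr)$ for a twisted difference operator $\tau$, and the problem becomes surjectivity of $\tau$. But the step you give for this is not right as stated. First, the assertion that $\Ext^1_A(M,N)$ is $A$-torsion is false: taking $M=A_q/A_q(z-1)\cong\bigoplus_{n\in\bbZ}A/(z-q^{-n})$ and $N=A$, one gets $\Ext^1_A(M,N)\cong\prod_{n\in\bbZ}\bbC$ with $z$ acting on the $n$-th factor by $q^{-n}$, and the constant sequence $(1,1,\dots)$ is annihilated by no nonzero element of $A$. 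Second, the reduction to ``product-like pieces indexed by $\bbZ$'' is only sketched: for a general cyclic $A_q$-module $A_q/I$ with $I$ containing no $\sigma$-good element, the underlying $A$-module need not split into pieces supported on single $q^{\bbZ}$-orbits (it need not even be a direct sum of torsion and free parts), and $N$ is also arbitrary, so the asserted decomposition of $\Ext^1_A(M,N)$ and the ``forward/backward recursion'' showing surjectivity are not established. The conclusion $\gldim A_q=1$ is correct --- the paper invokes \cite[Ch.~7, Thm.~5.5]{MR}, which packages precisely this kind of analysis of a skew Laurent extension when $q$ is not a root of unity --- but to make your self-contained argument work you would need either to restrict to cyclic $M$ (legitimate, as $A_q$ is Noetherian) and carefully classify the $A$-module structure of $A_q/I$, or to invoke the theorem as the paper does.
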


\begin{proof} (1) is obvious;

(2) is \cite[Thm 1.17]{GW};

(3) is \cite[Cor  1.15]{GW};

(4) is \cite[Ch 7, Thm 5.5]{MR}.
\end{proof}

\begin{remark}\label{remark that aq is not a pid} There exist non-principal
left (and right) ideals in $A_q$. An example of such an ideal is given in Example \ref{ex of two gener}.
\end{remark}

\begin{defn}\label{defn of o} We denote by $\mathcal{O}$ the left $A_q$-module $A$ with its $A_q$-module structure determined by $\sigma \cdot 1=1$. So $\mathcal{O}\simeq A_q/A_q\cdot (\sigma -1)$.
\end{defn}

\subsubsection{Division with remainder in $A_q$}

\begin{lemma} \label{lemma on div with remainder} ($\sigma$-division) Let $w=w(z,\sigma ),\ r=r(z,\sigma )$ be nonzero elements in $A_q$. Assume that $\deg _\sigma w\leq \deg _\sigma r$. Then there exist nonzero $g=g(z)\in A$ and $h=h(z,\sigma )\in A_q$ such that
\begin{equation}\label{eq div with rem}
\deg_\sigma (gr-hw)<\deg _\sigma w.
\end{equation}
In case the coefficient of the highest (or the lowest) power  of $\sigma$ in $w$ is a unit, one may choose $g (z)$  to be a unit as well.
\end{lemma}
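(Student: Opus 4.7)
The plan is to carry out a noncommutative analogue of polynomial long division with respect to the $\sigma$-degree. Over the fraction field $K(A)$, the algebra $A_q$ localizes to the skew Laurent polynomial ring $K(A)[\sigma,\sigma^{-1};\phi]$ with $\phi f(z)=f(qz)$, in which one has an ordinary Euclidean algorithm with respect to $\sigma$-degree. The content of the lemma is that the denominators produced by that algorithm can all be bundled into a single element $g(z)\in A$.

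Writing
$$w=\sum_{i=m}^{M}w_i(z)\sigma^i,\qquad r=\sum_{j=k}^{K}r_j(z)\sigma^j$$
with the extreme coefficients nonzero, I set $d=\deg_\sigma w=M-m$ and $n=\deg_\sigma r=K-k$, and induct on $n-d\geq 0$. The one-step reduction uses the identity $\sigma^s f(z)=f(q^s z)\sigma^s$: with
$$g(z):=w_M(q^{K-M}z)\in A,\qquad h:=r_K(z)\,\sigma^{K-M}\in A_q,$$
the coefficients of $\sigma^K$ in $gr$ and in $hw$ both come out equal to $w_M(q^{K-M}z)\,r_K(z)$, so the top $\sigma$-power cancels in $gr-hw$. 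Iterating this elementary step and multiplying the successive $g$'s together will produce the required pair.

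The step that I expect to require the most care is the bookkeeping on the lowest $\sigma$-power: since $\deg_\sigma$ is a span rather than a degree, cancellation at the top is useful only if no new terms appear below $\sigma^k$. A short calculation shows that $hw$ has lowest $\sigma$-power $\sigma^{K-d}$, which sits strictly above $\sigma^k$ precisely when $n>d$, and coincides with $\sigma^k$ when $n=d$. Consequently $\deg_\sigma(gr-hw)\leq n-1$ in the first case (with the lowest term still $g(z)r_k(z)\sigma^k\neq 0$, so the element is nonzero and induction applies), and $\leq d-1$ in the second, which already gives the required bound in a single step.

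For the final clause, if $w_M(z)$ is a unit in $A$, necessarily $w_M(z)=cz^\ell$, so the multiplier $w_M(q^\alpha z)=cq^{\alpha\ell}z^\ell$ produced at each iteration is again a unit; the total $g$ is therefore a product of units, hence a unit. The parallel statement with $w_m$ in place of $w_M$ follows by running the symmetric division from the bottom: take $g(z):=w_m(q^{k-m}z)$ and $h:=r_k(z)\sigma^{k-m}$, cancelling the $\sigma^k$-terms of $gr$ and $hw$, with an entirely analogous inductive argument in which the highest $\sigma$-power of $hw$ stays below $\sigma^K$ when $n>d$.
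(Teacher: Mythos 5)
Your proof is correct and is essentially the same as the paper's: the one-step reduction $w_M(q^{K-M}z)\,r - r_K(z)\sigma^{K-M}w$ is exactly the element $s$ the paper uses, followed by the same induction on $\deg_\sigma r - \deg_\sigma w$. You are slightly more explicit than the paper in tracking the lowest $\sigma$-power (since $\deg_\sigma$ is a span), which is a welcome bit of care, but the approach is identical.
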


\begin{proof} Write $w$ and $r$ as Laurent polynomials in $\sigma$
$$w=w_k(z)\sigma ^k + \mathrm{lower},\quad
r=r_l(z)\sigma ^l+ \mathrm{lower},$$
where $w_k(z),r_l(z)\neq 0$. To find $g$ and $h$ as required we first  take $$s=s(z,\sigma ):=w_k(q^{l-k}z) r-r_l(z)\sigma ^{l-k}w.$$
Then $\deg _\sigma s<\deg _\sigma r$.
If $\deg _{\sigma}s<\deg _\sigma w$, then we are done. Otherwise,
by induction on the difference $\deg _\sigma r-\deg _\sigma w$ we may assume that the assertion holds for the pair $(s,w)$, hence it also
holds for the pair $(r,w)$.

One could alternatively write
$$w=w_k(z)\sigma ^k + \mathrm{higher},\quad
r=r_l(z)\sigma ^l+ \mathrm{higher},$$
where $w_k(z),r_l(z)\neq 0$. Then the same algorithm applies again.

The last assertion of the lemma is now clear.
\end{proof}

The proof of the next lemma is completely analogous to the proof of
Lemma \ref{lemma on div with remainder}.

\begin{lemma} \label{lemma on div with remainder part 2} ($z$-division). Let $w=w(z,\sigma ),\ r=r(z,\sigma )$ be nonzero elements in $A_q$. Assume that $\deg _z w\leq \deg _z r$. Then there exist nonzero $g=g(\sigma )\in S$ and $h=h(z,\sigma )\in A_q$ such that
\begin{equation}\label{eq div with rem}
\deg_z (gr-hw)<\deg _z w.
\end{equation}
In case the coefficient of the highest (or the lowest) power of $z$ in $w$ is a unit, one may choose $g(\sigma)$ to be a unit as well.
\end{lemma}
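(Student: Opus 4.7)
The plan is to run the proof of Lemma \ref{lemma on div with remainder} verbatim with the roles of $z$ and $\sigma$ swapped, taking account of the commutation rule $z\sigma = q^{-1}\sigma z$ (equivalently $z\sigma^n = q^{-n}\sigma^n z$) in place of $\sigma f(z)=f(qz)\sigma$. Write
$$w = w_k(\sigma)\, z^k + \mathrm{higher},\qquad r = r_l(\sigma)\, z^l + \mathrm{higher}$$
with $w_k, r_l \in S$ nonzero and $l \geq k$ (this matches the top $z$-degrees up to the assumption $\deg_z w \le \deg_z r$, which controls the induction). To eliminate the leading $z^l$-term I would set
$$s := w_k(q^{k-l}\sigma)\, r - r_l(\sigma)\, z^{l-k}\, w \in A_q.$$

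Verifying that the $z^l$-coefficients of the two terms coincide is the only place where the commutation law enters. Expanding $w_k(\sigma) = \sum a_n\sigma^n$, iterated application of $z\sigma^n = q^{-n}\sigma^n z$ gives $z^{l-k} w_k(\sigma) = w_k(q^{k-l}\sigma)\, z^{l-k}$, so the leading term of $r_l(\sigma) z^{l-k} w$ is $r_l(\sigma)\, w_k(q^{k-l}\sigma)\, z^l$; meanwhile the leading term of $w_k(q^{k-l}\sigma)\, r$ is $w_k(q^{k-l}\sigma)\, r_l(\sigma)\, z^l$. These agree because $S$ is commutative, so $\deg_z s < \deg_z r$.

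Now an induction on $\deg_z r - \deg_z w$, exactly as in the previous lemma, produces the desired $g \in S$ and $h \in A_q$ with $\deg_z(gr-hw) < \deg_z w$. The parallel expansion in which ``higher'' is replaced by ``lower'' works identically. For the last assertion, if the coefficient of the extremal power of $z$ in $w$ is a unit of $A_q$, then by the description of units as monomials $c\sigma^m$ that coefficient has the form $w_k(\sigma) = c\sigma^m$, whence $w_k(q^{k-l}\sigma) = c\,q^{m(k-l)}\sigma^m$ is still a unit in $S$; inductively the accumulated factor $g(\sigma)$ remains a unit. There is no substantive obstacle; the only care required is bookkeeping of the $q$-twists that appear when powers of $z$ are commuted past polynomials in $\sigma$.
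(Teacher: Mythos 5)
The paper itself gives no details here: it simply states that ``the proof of the next lemma is completely analogous to the proof of Lemma \ref{lemma on div with remainder},'' so you are being asked to fill in exactly the argument you wrote. Your proof is correct and is the intended one: you run the $\sigma$-division algorithm with the roles of $z$ and $\sigma$ exchanged, using the reduction step
$$s := w_k(q^{k-l}\sigma)\,r - r_l(\sigma)\,z^{l-k}\,w,$$
and the only genuine content is the sign of the $q$-twist, which you get right ($z^{m}\sigma^{n}=q^{-mn}\sigma^{n}z^{m}$ gives $z^{l-k}w_k(\sigma)=w_k(q^{k-l}\sigma)z^{l-k}$, whereas in the $\sigma$-case one gets $q^{l-k}$), together with the observation that the cancellation of the extremal $z^l$-terms uses commutativity of $S$. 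The induction on $\deg_z r-\deg_z w$, the ``higher'' versus ``lower'' variants, and the unit bookkeeping for the last assertion all transfer verbatim.

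One minor inaccuracy in your parenthetical: the hypothesis $\deg_z w\le\deg_z r$ is a statement about the spread $l-k$ in each element, not about the extremal exponents themselves, so it does not give you $l\ge k$. This is harmless, since $A_q$ is a Laurent ring and $z^{l-k}$ makes sense for either sign of $l-k$; the induction is controlled by the decrease of the spread $\deg_z(gr-hw)$, not by comparing $l$ and $k$. You might simply drop that clause.
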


\section{The category $\mathcal{M}$}

\subsection{The definition and first properties of objects in the category $\mathcal{M}$}

By an $A_q$-module we will always mean a {\bf left} $A_q$-module. 

\begin{defn} Let $A_q\text{-Mod}$ be the abelian category of left $A_q$-modules, and $\mathcal{M}=\mathcal{M}_q\subset  A_q\text{-Mod}$ its full abelian subcategory consisting of modules which are {\bf finitely generated as $A$-modules}. We regard $\mathcal{M}$ as the category of ``coherent sheaves" on the algebraic quantum elliptic curve with the parameter $q\in \bbC ^*$.
\end{defn}

\medskip

\noindent{\bf Question 1.}
 Let $q=e^{2\pi i\alpha}$, $q'=e^{2\pi i\alpha'}$. Suppose that $\alpha '=\alpha ^{-1}$. Is then $\mathcal{M}_q\simeq \mathcal{M}_{q'}$?

\medskip

\begin{lemma} \label{any obj is a-free} Any object in $\mathcal{M}$ is a free (finitely generated) $A$-module.
\end{lemma}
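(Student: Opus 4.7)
The plan is to exploit two facts: that $A=\bbC[z,z^{-1}]$ is a principal ideal domain (so finitely generated $A$-modules are classified by the structure theorem), and that the ambient algebra $A_q$ is simple (Lemma \ref{lemma list of basic prop}(2)). Together these will force the torsion part of any $M\in\mathcal{M}$ to vanish.

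Let $M\in\mathcal{M}$ and let $T\subseteq M$ be the $A$-torsion submodule. The first step is to show that $T$ is in fact an $A_q$-submodule of $M$, not merely an $A$-submodule. For this I would use the commutation relation $\sigma f(z)=f(qz)\sigma$ (which follows from $\sigma z=qz\sigma$): if $m\in T$ satisfies $f(z)m=0$ for some nonzero $f\in A$, then
$$f(qz)\cdot(\sigma m)=\sigma f(z)m=0,$$
and since $f(qz)\in A$ is still nonzero, $\sigma m\in T$. The analogous computation with $\sigma^{-1}f(z)=f(q^{-1}z)\sigma^{-1}$ handles $\sigma^{-1}m$, and for $z^{\pm 1}$ commutativity of $A$ makes the statement trivial. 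Since $\sigma^{\pm 1}$ and $z^{\pm 1}$ generate $A_q$, this shows $T$ is $A_q$-stable.

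Next, I would consider the two annihilators. Because $A$ is Noetherian, $T$ is finitely generated as an $A$-module; pick generators $m_1,\dots,m_k$, choose nonzero $f_i\in A$ with $f_im_i=0$, and observe that $f:=f_1\cdots f_k\in\mathrm{Ann}_A(T)$ is nonzero (since $A$ is a domain). Now form
$$I:=\mathrm{Ann}_{A_q}(T)=\{x\in A_q\mid xT=0\}.$$
This is automatically a left ideal, and because $T$ is an $A_q$-submodule we have $yT\subseteq T$ for every $y\in A_q$, which forces $(xy)T\subseteq xT=0$, so $I$ is also a right ideal. Thus $I$ is a two-sided ideal of $A_q$ containing the nonzero element $f$, so $I\neq 0$.

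By simplicity of $A_q$, we must have $I=A_q$; in particular $1\in I$, which says $T=0$. Hence $M$ is a torsion-free finitely generated module over the PID $A$, and the classification of such modules gives that $M$ is free of finite rank. The main (small) obstacle is verifying the $\sigma$-stability of $T$ in the correct direction; once one notices that $f(z)\mapsto f(qz)$ preserves nonvanishing, the argument closes cleanly via the simplicity of $A_q$, which is the essential use of the hypothesis that $q$ is not a root of unity.
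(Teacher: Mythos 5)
Your proof is correct, and it takes a genuinely different route from the paper's. You first check (with an explicit computation $f(qz)\cdot(\sigma m)=\sigma f(z)m=0$, using that $f\mapsto f(qz)$ is an automorphism of $A$) that the $A$-torsion submodule $T\subseteq M$ is $A_q$-stable, then produce a nonzero $f\in A$ annihilating $T$, observe that $\mathrm{Ann}_{A_q}(T)$ is a nonzero two-sided ideal, and invoke the simplicity of $A_q$ to conclude $T=0$. The paper instead argues geometrically on $\Spec A=\bbC^*$: once $T$ is known to be $\sigma$-stable, $\Supp T$ is a finite subset of $\bbC^*$ invariant under multiplication by $q$, which must be empty since $q$ has infinite order. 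Both arguments ultimately rest on $q$ not being a root of unity, but the paper uses it directly on the support, while you route it through simplicity of $A_q$ (which is cited from \cite{GW} under exactly that hypothesis). The paper's argument is the more elementary and self-contained of the two; yours has the merit of being purely module-theoretic, avoiding any mention of supports, and it makes explicit the $\sigma$-stability step that the paper states without justification.
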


\begin{proof} Let $M\in \mathcal{M}$ and denote by $M_{\tor}\subset M$ its $A$-torsion submodule. Then $\sigma (M_{\tor})=M_{\tor}$.
Hence the support $\Supp\,M_{\tor}\subset \Spec\,A=\bbC ^*$ is invariant under multiplication by $q$. However $M_{\tor}$ is a finitely generated torsion module, so its support consists of a finite number of points.
As  $q$ is not a root of unity,  $\Supp\,M_{\tor}=\emptyset$, and $M_{\tor}=0$. It remains to note that any finitely generated torsion-free $A$-module is free.
\end{proof}

We immediately obtain the following corollary.

\begin{cor}\label{cor on Artinian} The abelian category $\mathcal{M}$ is Artinian. In other words, every object in $\mathcal{M}$ has finite length.
\end{cor}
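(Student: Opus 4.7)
The plan is to use the $A$-rank as a finite invariant that strictly drops along any strict chain of subobjects in $\mathcal{M}$. By Lemma \ref{any obj is a-free}, every object $M \in \mathcal{M}$ is a free $A$-module of some finite rank $r = \rk_A(M)$, so attaching this integer to each object gives a non-negative function on $\mathcal{M}$ that I would like to control.

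First I would verify that $\mathcal{M}$ is closed under subobjects and quotients. If $N \subset M$ is an $A_q$-submodule, then $N$ is a submodule of the finitely generated $A$-module $M$ over the Noetherian ring $A = \bbC[z,z^{-1}]$, hence finitely generated as an $A$-module, so $N \in \mathcal{M}$; and $M/N$, being a quotient of a finitely generated $A$-module, is also in $\mathcal{M}$. Hence by Lemma \ref{any obj is a-free} both $N$ and $M/N$ are free $A$-modules.

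Next I would use additivity of rank. Tensoring the short exact sequence $0 \to N \to M \to M/N \to 0$ of free $A$-modules with the fraction field $K(A)$ yields
$$\rk_A(M) = \rk_A(N) + \rk_A(M/N).$$
In particular, if $N \subsetneq M$, then $M/N$ is a nonzero free $A$-module (hence of positive rank), so $\rk_A(N) < \rk_A(M)$.

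Applying this to any descending chain $M \supsetneq M_1 \supsetneq M_2 \supsetneq \cdots$ in $\mathcal{M}$, the ranks $\rk_A(M_i)$ strictly decrease, so the chain terminates after at most $\rk_A(M)$ steps. This simultaneously establishes the descending chain condition (Artinian) and the finite-length bound $\ell(M) \le \rk_A(M)$. I do not anticipate any genuine obstacle here: the content lives entirely in Lemma \ref{any obj is a-free} together with the PID property of $A$, and the only thing to check carefully is that subobjects and quotients in $\mathcal{M}$ remain in $\mathcal{M}$ so that Lemma \ref{any obj is a-free} can be reapplied.
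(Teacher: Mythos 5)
Your argument is correct and is precisely the argument the paper leaves implicit — the paper simply says ``we immediately obtain the following corollary'' after Lemma \ref{any obj is a-free}. You have spelled out the intended details: $\mathcal{M}$ is closed under subobjects and quotients (so Lemma \ref{any obj is a-free} applies to them), and $A$-rank strictly drops along proper inclusions because a nonzero subquotient is $A$-free of positive rank, giving a uniform bound $\ell(M)\le\rk_A(M)$ on all chain lengths.
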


\begin{cor}\label{cor on cyclic} Every object in $\mathcal{M}$ is a cyclic
$A_q$-module.
\end{cor}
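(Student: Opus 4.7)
The plan is to reduce cyclicity over $A_q$ to the existence of a cyclic vector for the associated $q$-difference module over $K := K(A) = \bbC(z)$, and then use Lemma~\ref{any obj is a-free} to descend back to $M$.

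First I would set $V := M \otimes_A K$. By Lemma~\ref{any obj is a-free}, $M$ is $A$-free of some rank $r$, so $V$ is an $r$-dimensional $K$-vector space equipped with a $\tau$-semilinear automorphism $\sigma$, where $\tau \in \Aut(K)$ is the automorphism $z \mapsto qz$. I claim it suffices to exhibit $m \in M$ such that $m, \sigma m, \ldots, \sigma^{r-1} m$ are linearly independent over $K$. For then the $A_q$-submodule $N := A_q m \subset M$ satisfies $N \otimes_A K = V$, so that $M/N$ has $A$-rank zero. Since $M/N$ is still an object of $\mathcal{M}$, Lemma~\ref{any obj is a-free} forces it to be $A$-free, hence $M/N = 0$, and so $M = A_q m$.

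To produce such an $m$, I would invoke the cyclic vector theorem for $q$-difference modules: since $q$ is not a root of unity, $\tau$ has infinite order on $K$, so $(V, \sigma)$ admits a cyclic vector $v \in V$, meaning $v, \sigma v, \ldots, \sigma^{r-1} v$ is a $K$-basis of $V$. Clearing denominators, write $v = m/f$ with $m \in M$ and nonzero $f \in A$; a short calculation using $\sigma\cdot a = \tau(a)\sigma$ gives $\sigma^k m = f(q^k z)\cdot \sigma^k v$, which is a nonzero $K$-multiple of $\sigma^k v$, so the $\sigma^k m$ inherit $K$-linear independence from the $\sigma^k v$. This $m$ serves as the required generator.

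The main obstacle is the cyclic vector theorem itself. Its standard proof is a genericity argument: fixing a $K$-basis of $V$, the quantity $\det [\,v \mid \sigma v \mid \cdots \mid \sigma^{r-1} v\,]$ may be viewed as a polynomial in the $\bbC$-coordinates of $v$ with coefficients in $K$, and one has to show that this polynomial does not vanish identically. I would verify this by induction on $r$: assuming the maximal $K[\sigma]$-orbit has dimension $d < r$, one picks $v_0$ attaining this bound, lifts a cyclic vector of $V/K[\sigma]v_0$ to some $w \in V$, and then perturbs $v_0 \leadsto v_0 + \lambda w$ with $\lambda \in K$; the infinite order of $\tau$ prevents the perturbation terms $\tau^k(\lambda)\sigma^k w$ from lying in any fixed proper subspace for all $\lambda$, producing a vector with larger orbit and contradicting maximality. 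Any $v$ off the zero locus of the determinant is then cyclic.
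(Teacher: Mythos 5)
Your proposal is correct, but it takes a genuinely different route from the paper's. The paper's proof is a two-line citation: $A_q$ is simple (Lemma~\ref{lemma list of basic prop}) and has infinite length over itself, $\mathcal{M}$ consists of finite-length modules (Corollary~\ref{cor on Artinian}), so Stafford's theorem on simple Noetherian rings \cite[Ch~1, Thm~8.18]{Bj} gives cyclicity directly. You instead pass to the $q$-difference module $V = M\otimes_A K(A)$ over $\bbC(z)$, invoke the cyclic vector lemma (valid because $q$ is not a root of unity), clear denominators, and then use Lemma~\ref{any obj is a-free} to deduce that a full-rank $A_q$-submodule $N = A_q m$ must equal $M$, since $M/N\in\mathcal{M}$ would be simultaneously $A$-torsion and $A$-free. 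That descent step is clean and fits naturally with how Lemma~\ref{any obj is a-free} is exploited throughout the paper. The trade-off: the paper's route is shorter but leans on a black-box structure theorem, while yours is more hands-on and makes the $q$-difference-equation flavor of $\mathcal{M}$ explicit.

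One soft spot: your sketch of the cyclic vector lemma itself is not quite a proof. The perturbation $v_0\leadsto v_0+\lambda w$ forces $\pi(W_\lambda)=V/U$ for $\lambda\neq 0$, which gives $\dim W_\lambda\geq r-d$; this contradicts maximality only when $2d<r$. The case $2d\geq r$ needs the full Deligne-style argument (or an explicit nonvanishing of the relevant $\tau$-twisted determinant). Since the cyclic vector lemma for difference modules over $(\bbC(z),\,z\mapsto qz)$ with $q$ not a root of unity is classical, you could simply cite it (e.g.\ van der Put--Singer's book on difference Galois theory, or Andr\'e's 2001 Annales ENS paper) rather than reprove it.
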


\begin{proof} The algebra $A_q$ is simple, and $A_q$ has infinite length as a left $A_q$-module. Therefore by a theorem of Stafford,  every $A_q$-module of finite length is cyclic \cite[Ch 1, Thm 8.18]{Bj}.
\end{proof}

\begin{defn} We call a nonzero element $p(z,\sigma )=p_m(z)\sigma ^m+\cdots+p_n(z)\sigma ^n$ $\sigma $-{\bf good} if $p_m(z), p_n(z)\in A$ are units.

Similarly, we call an element $r(z,\sigma)=r_m(\sigma )z^m+\cdots+r_n(\sigma )z^n$ $z$-{\bf good} if $r_m(\sigma ),r_n(\sigma )\in S$ are units.
\end{defn}

\begin{remark}\label{rem on prod of good elements}  Let $p,r\in A_q$. Then the product $p\cdot r$ is $\sigma $-good (resp. $z$-good) if and only if $p$ and $r$ are so.
\end{remark}

\begin{lemma} \label{comp of rank} Let $I\subset A_q$ be a nonzero left ideal and let $M=A_q/I$ be the corresponding cyclic $A_q$-module. Then the $A$-rank $\rk_AM$ is the minimum of the $\sigma$-degrees of nonzero elements in $I$. (Similarly, the $S$-rank $\rk_SM$ is the minimum of the $z$-degrees of nonzero elements in $I$.) In particular, $\rk_AM,\ \rk_SM <\infty$.
\end{lemma}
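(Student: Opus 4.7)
The plan is to localize at the multiplicative subset $A\setminus\{0\}$ and reduce to a noncommutative principal ideal domain in which $\sigma$-degree is a Euclidean norm. First, observe that $A_q$ is the skew Laurent polynomial ring $A[\sigma^{\pm 1};\tau]$, where $\tau$ is the automorphism of $A$ sending $a(z)$ to $a(qz)$. Since $A$ is commutative and stable under $\tau$, the set $A\setminus\{0\}$ is a two-sided Ore subset of $A_q$, and the localization $B:=A_q[A^{-1}]$ is the skew Laurent polynomial ring $K(A)[\sigma^{\pm 1};\tau]$ over the field $K(A)$. In particular, $B$ is a noncommutative Euclidean domain with respect to the $\sigma$-degree, and every nonzero left ideal of $B$ is principal.

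Since $K(A)$ is flat over $A$, tensoring the short exact sequence $0\to I\to A_q\to A_q/I\to 0$ with $K(A)$ yields $(A_q/I)\otimes_A K(A)=B/BI$, and therefore $\rk_A(A_q/I)=\dim_{K(A)}(B/BI)$. Write $BI=B\tilde w$ for some $\tilde w\in B$ of $\sigma$-degree $\tilde d$. Normalizing $\tilde w$ on the left by a power of $\sigma$ and a nonzero scalar in $K(A)$ (both units in $B$) to put it in the monic form $\tilde w = v_0 + v_1\sigma + \cdots + v_{\tilde d-1}\sigma^{\tilde d-1} + \sigma^{\tilde d}$ with $v_0\ne 0$, a routine reduction---using $\tilde w\equiv 0$ to express the positive powers $\sigma^{\tilde d+j}$ in terms of lower powers and $\sigma^{-1}\tilde w\equiv 0$ to express $\sigma^{-1}$ in terms of nonnegative powers---exhibits $\{1,\sigma,\ldots,\sigma^{\tilde d-1}\}$ as a $K(A)$-basis of $B/B\tilde w$. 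Thus $\dim_{K(A)}(B/BI)=\tilde d$.

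The crux, and the step I expect to be the main obstacle, is to identify $\tilde d$ with the minimum $\sigma$-degree $d$ of nonzero elements of $I$. The inequality $d\geq\tilde d$ is immediate from additivity of $\sigma$-degree in $B$: any $0\ne y\in I\subset B\tilde w$ satisfies $y=b\tilde w$, whence $\deg_\sigma y=\deg_\sigma b+\tilde d\geq\tilde d$. For the reverse, write $\tilde w=\sum b_k x_k$ with $b_k\in B$ and $x_k\in I$, and clear denominators to find $a\in A\setminus\{0\}$ such that $ab_k\in A_q$ for every $k$; then $a\tilde w=\sum(ab_k)x_k\in I$, since $I$ is a left $A_q$-ideal. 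The essential observation is that left multiplication by a nonzero element of $A$ preserves $\sigma$-degree---if $x=x_m(z)\sigma^m+\cdots+x_n(z)\sigma^n$ with $x_m,x_n\ne 0$, then $ax=(ax_m)\sigma^m+\cdots+(ax_n)\sigma^n$ retains nonzero extreme coefficients because $A$ is a domain---so $\deg_\sigma(a\tilde w)=\tilde d$, giving $d\leq\tilde d$. Hence $d=\tilde d=\rk_A(A_q/I)$, and in particular $\rk_A(A_q/I)<\infty$. The claim about $\rk_S$ and the minimum $z$-degree follows by the symmetric argument with the roles of $z$ and $\sigma$ interchanged (or by applying the Fourier automorphism $\phi$).
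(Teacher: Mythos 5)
Your proof is correct, but it takes a genuinely different route from the paper's. The paper argues directly inside $K(A)\otimes_A M$: it picks an element $w=w_m(z)\sigma^m+\cdots+w_n(z)\sigma^n\in I$ of minimal $\sigma$-degree, shows $\{1\otimes\sigma^m e,\ldots,1\otimes\sigma^{n-1}e\}$ is $K(A)$-independent (any relation would produce a nonzero element of $I$ of strictly smaller $\sigma$-degree, a contradiction), and shows it spans using the two explicit recurrences \eqref{induct up} and \eqref{induct down} --- which are precisely what you call the ``routine reduction.'' Your version instead localizes to $B=K(A)[\sigma^{\pm1};\tau]$, uses that $B$ is a left PID with $\sigma$-degree as Euclidean norm, and identifies $\rk_A M$ with $\deg_\sigma\tilde w$ for a generator $\tilde w$ of $BI$. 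This is a clean, more structural packaging of the same underlying division idea (compare Lemma~\ref{lemma on div with remainder}, which is exactly the Euclidean step), and it makes the finiteness of $\rk_A M$ immediate. The one extra step your route forces --- and you handle it correctly --- is the identification $\tilde d=d$: you clear denominators to get $a\tilde w\in I$ with $a\in A\setminus\{0\}$, then observe that left multiplication by a nonzero element of the domain $A$ preserves $\sigma$-degree. The paper sidesteps this by working with $w\in I$ directly (at the cost of not normalizing $w$ to monic form), which keeps the argument self-contained and free of any appeal to Ore localization or the PID property of $B$.
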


\begin{proof} We prove only the first claim, the second being completely analogous.

Let $e\in M$ be the image of $1\in A_q$. The collection of elements $\{\sigma ^ke\}_{k\in \bbZ}$ generates the $A$-module $M$. Let $0\neq w(z,\sigma )=w_m(z)\sigma ^m+\cdots+w_n(z)\sigma ^n\in I$ be an element of minimal $\sigma $-degree $n-m$.
Any $A$-linear relation among $\sigma^m e, \sigma^{m+1} e,\ldots, \sigma^{n-1} e$ would give an element of $I$ of lower $\sigma$-degree than $n-m$, contrary to assumption,
so
$$\mathcal{B} := \{1\otimes \sigma^m e,\ldots, 1\otimes \sigma^{n-1} e\}$$
is an independent set in the $K(A)$-vector space $K(A)\otimes _AM$.
On the other hand, for any $k\in \bbZ$ we have
\begin{equation}
\label{induct up}
w_n(q^k z) \sigma^{n+k} = \sigma^k w_n(z) \sigma^n = -\sum_{i=m}^{n-1} \sigma^k w_i(z) \sigma^i = -\sum_{i=m}^{n-1}  w_i(q^kz) q^{i+k},
\end{equation}
so by induction on $k\ge 0$, all $1\otimes \sigma^{n+k} e$ lie in the span of $\mathcal B$. Likewise,
\begin{equation}
\label{induct down}
w_m(q^{-k}z) \sigma^{m-k} = \sigma^{-k} w_m(z) \sigma^m = -\sum_{i=m+1}^{n} \sigma^{-k} w_i(z) \sigma^i = -\sum_{i=m+1}^{n}  w_i(q^{-k}z) q^{i-k},
\end{equation}
so by induction on $k\ge 1$, all $1\otimes w_{m-k}(z)$ lie in  the span of
$\mathcal B$. It follows that $\mathcal B$ is a basis for $K(A)\otimes _AM$.

\end{proof}

\begin{lemma}\label{crit for being in m} Let $0\neq I\subset A_q$ be a left ideal and let $M:=A_q/I$ be the corresponding cyclic $A_q$-module. Then $M\in \mathcal{M}$ if and only if $I$ contains a $\sigma$-good element.

Similarly, $M$ is a finitely generated $S$-module if and only if $I$ contains a $z$-good element.
\end{lemma}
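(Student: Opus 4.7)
The plan is to prove the $\sigma$-good statement in detail; the $z$-good statement then follows by the entirely analogous argument (swapping the roles of $z$ and $\sigma$, and of $A$ and $S$).

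For the implication ``$I$ contains a $\sigma$-good element $\Rightarrow M\in\mathcal{M}$'': let $w=\sum_{i=m}^n w_i(z)\sigma^i\in I$ be $\sigma$-good, and let $e\in M$ be the image of $1$. Applying the identities \eqref{induct up} and \eqref{induct down} to $e$ gives relations expressing $w_n(q^kz)\sigma^{n+k}e$ (resp.\ $w_m(q^{-k}z)\sigma^{m-k}e$) as $A$-combinations of the $\sigma^je$ with intermediate $j$. Because $w_n$ and $w_m$ are units in $A$, so are $w_n(q^kz)$ and $w_m(q^{-k}z)$, so these relations can be solved for $\sigma^{n+k}e$ and $\sigma^{m-k}e$. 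An induction on $k\ge 0$ in each direction shows that every $\sigma^je$ lies in the $A$-span of $\{\sigma^me,\ldots,\sigma^{n-1}e\}$, so these $n-m$ elements generate $M$ over $A$.

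For the implication ``$M\in\mathcal{M}\Rightarrow I$ contains a $\sigma$-good element'': assuming $M\ne 0$ (the case $I=A_q$ is handled by noting $\sigma\in I$ is itself $\sigma$-good), Lemma \ref{any obj is a-free} makes $M$ an $A$-free module, hence $A$-Noetherian. Consider the two ``halves''
$$M_+:=\sum_{k\ge 0}A\sigma^ke\quad\text{and}\quad M_-:=\sum_{k\le 0}A\sigma^ke,$$
both finitely generated $A$-submodules of the Noetherian module $M$. Stabilization of the ascending chains $\sum_{k=0}^N A\sigma^ke$ and $\sum_{k=0}^N A\sigma^{-k}e$ yields indices $N_\pm\ge 0$ with $\sigma^{N_++1}e\in M_+$ and $\sigma^{-(N_-+1)}e\in M_-$, providing elements
$$u:=\sigma^{N_++1}-\sum_{k=0}^{N_+}a_k(z)\sigma^k\in I,\qquad v:=\sigma^{-(N_-+1)}-\sum_{k=0}^{N_-}b_k(z)\sigma^{-k}\in I$$
whose leading and trailing coefficients respectively are the unit $1\in A$.

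The decisive step is that $u+v\in I$ is $\sigma$-good: since $v$ contains no $\sigma$-power above $\sigma^0$ and $N_++1\ge 1$, the coefficient of $\sigma^{N_++1}$ in $u+v$ is the leading $1$ of $u$; by symmetry the coefficient of the lowest $\sigma$-power $\sigma^{-(N_-+1)}$ is the trailing $1$ of $v$, both units in $A$. The hard part of the proof is precisely this: forcing unit coefficients at \emph{both} extreme $\sigma$-powers of a single element. Splitting the $\sigma$-orbit of $e$ into positive and negative halves allows me to construct the two ends independently and then merge them by addition. For the $z$-good statement, the parallel argument requires the $S$-analogue of Lemma \ref{any obj is a-free}, which holds because the $S$-torsion submodule of any $S$-finitely generated $M$ is $A_q$-stable (using $zf(\sigma)=f(q^{-1}\sigma)z$) with finite $\Spec S$-support invariant under the shift $\sigma\mapsto q\sigma$, forcing emptiness since $q$ is not a root of unity.
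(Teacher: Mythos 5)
Your proof is correct, and the essential step --- producing $u\in I$ with a unit leading $\sigma$-coefficient, $v\in I$ with a unit trailing $\sigma$-coefficient, and observing that $u+v$ is $\sigma$-good --- is exactly the device the paper uses. The only cosmetic difference is that the paper fixes a finite set of consecutive generators $\sigma^m e,\ldots,\sigma^n e$ (which exists because $M$ is $A$-finitely generated and $M=\sum_i A\sigma^i e$) and reads off $r,s$ from the memberships of $\sigma^{n+1}e$ and $\sigma^{m-1}e$ in that span, rather than running two Noetherian chains. One small simplification: the appeal to Lemma \ref{any obj is a-free} (and to its $S$-analogue in your last sentence) is unnecessary, since $M$ is Noetherian over $A$ (resp.\ over $S$) merely from being finitely generated over a Noetherian ring; freeness plays no role in this argument.
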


\begin{proof} Again, we prove only the first claim.

Let $e\in M$ be the image of $1\in A_q$. Let $p(z,\sigma)=p_m(z)\sigma ^m+\cdots+p_n(z)\sigma ^n\in I$ be a $\sigma $-good element, so $p_m(z),p_n(z)\in A$ are units. Then for any
$k\in \bbZ$ the element
$$\sigma ^kp(z,\sigma )=p_m(q^kz)\sigma ^{m+k}+\cdots+p_n(q^kz)\sigma ^{n+k}\in I$$
is also $\sigma $-good. By \eqref{induct up} and \eqref{induct down}, it follows that for any $l\in \bbZ$,
$$\sigma ^le\in \sum _{i=0}^{n-1}A\cdot \sigma ^ie.$$

Conversely, assume that $M\in \mathcal{M}$. Then for some $m\leq n\in \bbZ$, $M$ is generated as an $A$-module by $\sigma ^me,\sigma ^{m+1} e,\ldots,\sigma ^ne$. It follows that
the ideal $I$ contains elements
$$r=r_m(z)\sigma ^m+r_{m+1}(z)\sigma ^{m+1} +\cdots+r_n(z)\sigma ^n+\sigma ^{n+1}$$
and also
$$s=\sigma ^{m-1}+s_m(z)\sigma ^m+s_{m+1}(z)\sigma ^{m+1} +\cdots+s_n(z)\sigma ^n.$$
Then $r+s\in I$ is a good element.
\end{proof}

\begin{remark}\label{rem on princ ideals} Let $0\neq I\subset A_q$ be a principal ideal generated by $p(z,\sigma)$. Then $M:=A_q/I\in \mathcal{M}$ if and only if $p(z,\sigma)$ is $\sigma$-good. Similarly, $M$ is finitely generated over $S$ if and only if $p(z,\sigma)$ is $z$-good.
\end{remark}

\begin{defn} \label{defn of a good elt} An object $M\in \mathcal{M}$ is called {\bf good} if $M\simeq A_q/I$, where $I$ is a principal ideal (generated by a $\sigma$-good element).
\end{defn}

\subsection{The presentation of an object in $\mathcal{M}$}

\begin{prop} \label{present of objects in m} Let $I\subset A_q$ be an ideal such that $M:=A_q/I\in \mathcal{M}$. Then $I$ is either principal generated by a $\sigma$-good element (Remark \ref{rem on princ ideals}), or else it has 2 generators. More precisely, let $p(z,\sigma )\in I$ be any $\sigma $-good element and $0\neq w(z,\sigma )\in I$ be any element of the lowest $\sigma $-degree. Then $I$ is generated by $p$ and $w$.
\end{prop}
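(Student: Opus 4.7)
My plan is to introduce $I' := A_q p + A_q w \subseteq I$ and show $I' = I$ by comparing $A$-ranks across the short exact sequence of $A$-modules
\[
0 \to I/I' \to M' \to M \to 0,
\]
where $M := A_q/I$ and $M' := A_q/I'$. If I can show that $M'$ lies in $\mathcal{M}$ and that $\rk_A M' = \rk_A M$, then by Lemma \ref{any obj is a-free} the module $M'$ is $A$-free and in particular $A$-torsion-free, so its submodule $I/I'$ is torsion-free; additivity of $A$-rank in the short exact sequence then forces $I/I'$ to have rank zero, hence to vanish.

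First I would verify $M' \in \mathcal{M}$: since $p \in I'$ is $\sigma$-good, Lemma \ref{crit for being in m} applies directly. Then I would compute both ranks via Lemma \ref{comp of rank}. On the one hand, $\rk_A M$ is the minimum $\sigma$-degree of a nonzero element of $I$, which by the choice of $w$ is $\deg_\sigma w$. On the other hand, $\rk_A M'$ is the corresponding minimum over $I'$; since $I' \subseteq I$ this minimum is at least $\deg_\sigma w$, and since $w \in I'$ it is at most $\deg_\sigma w$. Thus $\rk_A M' = \rk_A M$, and the argument outlined in the first paragraph concludes that $I = A_q p + A_q w$. The principal-ideal clause of the statement is then immediate from Remark \ref{rem on princ ideals}: if $I$ happens to be principal, any generator must itself be $\sigma$-good, since otherwise $M$ would not lie in $\mathcal{M}$.

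The main difficulty I anticipate is conceptual rather than technical: one is tempted to try a direct reduction-by-division inside $I$, using Lemma \ref{lemma on div with remainder} with $p$ to shrink the $\sigma$-degree of a given $r \in I$ below $\deg_\sigma p$ (which works cleanly because $p$ is $\sigma$-good, so the scalar factor can be chosen to be a unit), and then using Lemma \ref{lemma on div with remainder} with $w$ to eliminate the residue. The second reduction, however, only produces $gr - hw$ with $g \in A$ typically not a unit, so the resulting congruence does not directly place $r$ itself into $A_q w$. Moving to the cyclic quotients $M$ and $M'$ and invoking the torsion-freeness of objects of $\mathcal{M}$ sidesteps this obstacle by letting the $A$-freeness supplied by Lemma \ref{any obj is a-free} perform the cancellation for us.
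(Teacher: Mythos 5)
Your argument is correct, and it follows a genuinely different (and arguably cleaner) route than the paper's. The paper proceeds element by element: given any $r\in I$, it applies Lemma \ref{lemma on div with remainder} to $\sigma$-divide $r$ by $w$; since $w$ has the minimal $\sigma$-degree in $I$ and $gr-hw\in I$, the remainder vanishes and $gr=hw\in J$, so the image of $r$ in $N=A_q/J$ is $A$-torsion, hence zero by Lemma \ref{any obj is a-free}. Your proof instead works at the level of modules: you invoke Lemma \ref{comp of rank} to see $\rk_A M' = \rk_A M = \deg_\sigma w$, then use additivity of $A$-rank along $0\to I/I'\to M'\to M\to 0$ and torsion-freeness of $M'\in\mathcal{M}$ to conclude $I/I'=0$. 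Both arguments hinge on the same two facts --- $A_q/(A_q p + A_q w)\in\mathcal M$ by Lemma \ref{crit for being in m}, and objects of $\mathcal M$ are $A$-torsion-free by Lemma \ref{any obj is a-free} --- but your version replaces the explicit division step with a rank count, effectively outsourcing the division argument to the already-proved Lemma \ref{comp of rank}. That makes your proof shorter and more structural, at the cost of slightly obscuring where the division-with-remainder mechanism is doing the work. Your closing observation about why a naive two-stage division fails (the second division only produces a non-unit $g\in A$) and how torsion-freeness rescues the argument is exactly the point the paper exploits in its own final step, so your diagnosis of the difficulty is accurate.
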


\begin{proof} Assume that $I$ is not principal. By Lemma \ref{crit for being in m} it contains a $\sigma $-good element $p\in I$. Let $0\neq w\in I$ be an element of the lowest $\sigma $-degree. Assume that such
a $w$ can be chosen to be $\sigma $-good. Then by Lemma \ref{lemma on div with remainder}, for any $r\in I$ there exist a unit $g\in A$ and an element $h\in A_q$ such that
$$\deg _\sigma (gr-hw)<\deg _\sigma w,$$
which means that $gr=hw$, hence $r\in A_q\cdot w$, i.e. the ideal $I$ is principal, a contradiction.

So $w$ is not $\sigma $-good. (In particular $\deg _\sigma p>\deg _\sigma w$.) This implies that $p\notin A_q\cdot w$, i.e. $I$ is not principal.

Let $J:=A_q\cdot p+A_q\cdot w\subset I$ be the ideal generated by $p$ and $w$. Then $N:=A_q/J\in \mathcal{M}$, hence is $A$-torsion free. Choose $r(z,\sigma )\in I$. Then $\deg _\sigma r\geq \deg _\sigma w$, and we can $\sigma$-divide $r$ by $w$ (without remainder). That is there exists $0\neq g(z)\in A$ and $h\in A_q$ such that $g\cdot r=h\cdot w$. This means that the image of $r$ in the module $N$ is $A$-torsion. Hence $r\in J$.
\end{proof}

\begin{example}\label{ex of two gener} The following example shows that the second possibility in Proposition \ref{present of objects in m} indeed occurs.
Consider the left ideal $J=A_q\cdot (\sigma -1)\subset A_q$ and the corresponding object $M=A_q/J \in \mathcal{M}$. Let $e\in M$ be the image of $1 \in A_q$. Then $M=A\cdot e$, $\sigma e=e$ and $M\simeq \mathcal{O}$. This object is simple and therefore generated by any of its nonzero elements. Choose the generator $f=(1+z)e\in M$. Let $I:=Ann(f)\subset A_q$. We have $pf=wf=0$, where
$$p=p(z,\sigma )=\sigma ^2-(q+1)\sigma +q,\quad \text{and}\quad
w=w(z+1)\sigma -(qz+1).$$
Then $w\in I$ is an element of the smallest $\sigma$-degree, and $p\in I$ is a $\sigma$-good element (of the smallest $\sigma$-degree).
By Proposition \ref{present of objects in m}, $I=A_q\cdot p+A_q\cdot w$.
We have the relation $(qz+1)p=(\sigma -q)w$, i.e. $p$ is $\sigma$-divisible by $w$.
\end{example}

Of course, in the above example, for any $n\in \bbZ$, the annihilator of the generator $z^ne$ is a principal ideal generated by the $\sigma $-good polynomial $\sigma -q^n$. This raises the question if every $0\neq M\in \mathcal{M}$ is good.

\medskip

\noindent{\bf Question 2.} Let $0\neq M\in \mathcal{M}$. Is there a generator $m$ of $M$, whose annihilator is a principal ideal (generated by a $\sigma$-good element)?

\medskip

%
%
%

\subsection{Analyzing the $A$-rank and $S$-rank of an object in $\mathcal{M}$} Let $0\neq I\subset A_q$ be a left ideal, $M:=A_q/I$. Recall (Lemma \ref{comp of rank}) that the $A$-rank $\rk_AM$ (resp. the $S$-rank  $\rk_SM$) is the smallest $\sigma $-degree (resp. $z$-degree) of a nonzero element $r(z,\sigma )\in I$. The following lemma will be important to us later.

\begin{lemma}\label{lemma on realiz of sigma rank} (1) Suppose that $I$ contains a $\sigma $-good element (i.e. $A_q/I\in \mathcal{M}$). Let $0\neq r(z,\sigma )\in I$ have the smallest $z$-degree. Then $r$ is $\sigma $-good.

(2) Similarly, suppose that $I$ contains a $z$-good element. Let $0\neq r(z,\sigma )\in I$ be an element of the smallest $\sigma$-degree. Then $r$ is $z$-good.
\end{lemma}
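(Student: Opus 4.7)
My approach to part (1) is via Ore localization at $S\setminus 0\subset A_q$; part (2) will follow by applying the Fourier automorphism $\phi$, which interchanges $\sigma$- and $z$-degrees and swaps $\sigma$-good with $z$-good. The commutation $\sigma^i z^j = q^{ij}z^j\sigma^i$ makes $S\setminus 0$ a left and right Ore set in $A_q$, so I can form the localization $L_q:=(S\setminus 0)^{-1}A_q$, which I identify with the skew Laurent polynomial ring $L[z,z^{-1};\tau]$ over the field $L=\bbC(\sigma)$, where $\tau(f)(\sigma)=f(q^{-1}\sigma)$. Because $L_q$ is a skew Laurent polynomial ring over a field, it is a (left and right) PID and its $z$-degree is additive under multiplication.

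The extended left ideal $L_q\cdot I$ is therefore principal, $L_q I=L_q\rho$, with $\deg_z\rho=d_z$, the minimum $z$-degree among nonzero elements of $I$ (the Ore denominators lie in $S$ and contribute $0$ to $z$-degree). After clearing denominators I may take $\rho\in A_q$, and after left-dividing $\rho$ by its $S$-content $c=\gcd_j\tilde\rho_j\in S$ (writing $\rho=\sum_j\tilde\rho_j(\sigma)z^j$ with $\tilde\rho_j\in S$), I may further assume $\rho$ is primitive, i.e., $\gcd_j\tilde\rho_j=1$ in $S$.

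The hypothesis now enters. Since $p\in I\subset L_q\rho$, write $p=\beta\rho$ for some $\beta\in L_q$, and clear denominators on $\beta$ to obtain an identity $sp=\alpha\rho$ in $A_q$ with $s\in S\setminus 0$ and $\alpha\in A_q$. Both $s$ (tautologically, since its extreme $\sigma$-coefficients are nonzero scalars in $\bbC^*\subset A$) and $p$ are $\sigma$-good, so by Remark~\ref{rem on prod of good elements} the product $sp=\alpha\rho$ is $\sigma$-good, and the converse direction of the same remark applied to the factorization $\alpha\rho$ forces $\rho$ itself to be $\sigma$-good.

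To finish, let $r\in I$ be any nonzero element of $z$-degree $d_z$. Then $r\in A_q\cap L_q\rho$, and the additivity of $\deg_z$ in $L_q$ forces $r=\lambda\rho$ for some $\lambda\in L$. The primitivity of $\rho$ is the decisive ingredient: writing $\lambda=\mu/\nu$ in lowest terms with $\mu,\nu\in S$ coprime, the identities $\nu\tilde r_j=\mu\tilde\rho_j$ (valid in $S$ for every $j$) combined with $\gcd(\mu,\nu)=1$ give $\nu\mid\tilde\rho_j$ for all $j$, hence $\nu\mid\gcd_j\tilde\rho_j=1$, so $\nu$ is a unit and $\lambda\in S$. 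Every element of $S$ is automatically $\sigma$-good (its extreme $\sigma$-coefficients are in $\bbC^*\subset A$), so a final application of Remark~\ref{rem on prod of good elements} yields that $r=\lambda\rho$ is $\sigma$-good. The main obstacle to watch for is the primitivity step: without it one only obtains $\lambda\in L$, which is insufficient to control the $\sigma$-leading and trailing coefficients of $r$.
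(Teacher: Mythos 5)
Your proof is correct, but it takes a genuinely different route from the paper's. The paper argues directly by contradiction using $z$-division with remainder (Lemma~\ref{lemma on div with remainder part 2}): if $r$ were not $\sigma$-good, pick a $\sigma$-good $p\in I$; since $\deg_z p\geq\deg_z r$, $z$-division yields $g\in S\setminus 0$ and $h\in A_q$ with $\deg_z(gp-hr)<\deg_z r$, and the minimality of $\deg_z r$ in $I$ forces $gp=hr$; then $gp$ is $\sigma$-good while $hr$ is not (Remark~\ref{rem on prod of good elements}), a contradiction. You instead localize at $S\setminus 0$ to reach the skew Laurent PID $L_q=\bbC(\sigma)[z,z^{-1};\tau]$, where $L_qI$ becomes principal with a generator $\rho\in A_q$ of minimal $z$-degree; the relation $sp=\alpha\rho$ forces $\rho$ to be $\sigma$-good, and $r=\lambda\rho$ with $\lambda\in L$ then transfers $\sigma$-goodness to $r$. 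The two arguments encode the same algebraic content --- the division algorithm \emph{is} what makes the localization a PID --- but yours is a heavier, more structural package while the paper's is a three-line contradiction. One small economy in your version: the primitivity step is not actually needed. From $r=\lambda\rho$ with $\lambda=\mu/\nu\in L$ in lowest terms, clear denominators to get $\nu r=\mu\rho$ in $A_q$; since $\mu,\nu\in S$ and $\rho$ are all $\sigma$-good, Remark~\ref{rem on prod of good elements} applied to $\mu\rho=\nu r$ immediately yields that $r$ is $\sigma$-good, without ever needing $\lambda\in S$. Your reduction of part (2) to part (1) via the Fourier automorphism $\phi$ is also valid and parallels the paper's remark that ``the proof of (2) is completely analogous.''
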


\begin{proof} We only prove (1), since the proof of (2) is completely analogous.

Assume not. Choose a $\sigma$-good element $p=p(z,\sigma)\in I$. By assumption $\deg _zp\geq \deg _zr$. Then we can $z$-divide $p$ by $r$ (Lemma \ref{lemma on div with remainder part 2}). That is there exists $0\neq g(\sigma )\in S$ and $h(z,\sigma )\in A_q$ such that
$gp=hr$. However, the element $gp$ is $\sigma $-good and $hr$ is not (since $r$ is not $\sigma$-good). A contradiction.
\end{proof}

\section{Tensor structure and inner $\mathcal{H}om$ in the category $\mathcal{M}$}\label{sect def of tensor str}   In this section we denote the $A$-rank of an $A_q$-module simply by $\rk\,M$. When no ring is indicated, tensor products in
this section should be understood to be over $A$.

The category $A_q\text{-Mod}$ has a symmetric tensor structure. Namely, given $M,N\in A_q\text{-Mod}$ the $A$-module $M\otimes _AN$ (where $am\otimes n=m\otimes an$) is also an $A_q$-module: $\sigma (m\otimes n)=\sigma (m)\otimes \sigma (n)$. The $A_q$-modules $M\otimes _AN$ and $N\otimes _AM$ are isomorphic via the map $m\otimes n\mapsto n\otimes m$.
The $A_q$-module $\mathcal{O}$ (Definition \ref{defn of o}) is the unit for this tensor product. This tensor structure restricts to the subcategory $\mathcal{M}$ and clearly $\rk\,M\otimes _AN=(\rk\,M)(\rk\,N)$.

For $M,N\in A_q\text{-Mod}$ the $A$-module $\Hom _A(M,N)$ is an $A_q$-module: $$\sigma(f)(m):=\sigma (f(\sigma ^{-1}m)).$$
We denote this $A_q$-module by $\mathcal{H}om (M,N)$. Clearly the bifunctor $\mathcal{H}om (-,-)$ preserves the subcategory $\mathcal{M}$.

Using the $A_q$-module $\mathcal{O}$ we define the duality (contravariant) functor
$$(-)^\vee :\mathcal{M}\to \mathcal{M}^{opp},\quad M\mapsto
\mathcal{H}om (M,\mathcal{O}).$$
It is an involution, i.e. $M^{\vee \vee}=M$ via the map
$$M\to M^{\vee \vee},\quad m\mapsto (f\mapsto f(m)).$$
For $M,N\in \mathcal{M}$ we have a natural functorial morphism of $A_q$-modules
$$\mathcal{H}om (M,N)\otimes _AM\to N,\quad f\otimes m\mapsto f(m).$$
In particular there is a canonical pairing - the {\it evaluation map} (a map of $A_q$-modules)
$$\ev=\langle -,-\rangle :M^\vee \otimes M \to \mathcal{O}.$$

The standard canonical isomorphism of $A$-modules
$$M^\vee \otimes M\to \mathcal{H}om(M,M)$$
is a map of $A_q$-modules. Hence the map of $A_q$-modules
$$\mathcal{O}\to \mathcal{H}om(M,M), \quad 1\mapsto id$$
gives the {\it coevaluation} morphism of $A_q$-modules
$$\coev:\mathcal{O}\to M^\vee \otimes M$$ such that
the composition
$$M=M\otimes \mathcal{O}\stackrel{\id\otimes \coev}{\xrightarrow{\hspace{32pt}}} M\otimes M^\vee \otimes M\stackrel{\ev\otimes \id}{\xrightarrow{\hspace{22pt}}}M$$
is the identity.

This makes $\mathcal{M}$ into a symmetric rigid monoidal category \cite{EGNO}.

The next lemma will be useful for us later.

\begin{lemma} \label{lemma useful later 1} (1) For any $M,N\in \mathcal{M}$ there is a canonical isomorphism of $\bbC$-vector spaces
$$\Hom _{A_q}(M,N)=\Hom _{A_q}(A,\mathcal{H}om (M,N)).$$

(2) For $M\in \mathcal{M}$ and $N\in A_q\text{-Mod}$ there is a canonical isomorphism of $A_q$-modules
$$\mathcal{H}om (M,N)=M^\vee \otimes N.$$

(3) For $M,N,K\in \mathcal{M}$ there is a canonical isomorphism of $A_q$-modules
$$\mathcal{H}om(K\otimes M,N)=\mathcal{H}om(K,\mathcal{H}om(M,N)).$$
In particular (also taking into account (2)) we have
$$(M\otimes N^\vee)^\vee =\mathcal{H}om(M\otimes N^\vee ,\mathcal{O})=\mathcal{H}om(M,N^{\vee \vee})=\mathcal{H}om(M,N)=M^\vee \otimes N.$$

(4) For any $M\in \mathcal{M}$ and $N\in A_q\text{-Mod}$ there is a canonical functorial isomorphism
of $\bbC$-vector spaces
$$\Hom _{A_q}(A_q\otimes M,N)=\Hom _A(M,N).$$
In particular the $A_q$-module $A_q\otimes M$ is projective.
\end{lemma}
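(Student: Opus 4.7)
The plan is to handle the four parts in the order (2), (1), (3), (4), because (2) supplies the structural fact that makes the others essentially bookkeeping, and because (1)--(4) fit together as a standard package of adjunctions in a rigid tensor category, the only subtlety being to promote each $A$-linear isomorphism to an $A_q$-linear one.

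For (2) I would start by noting that, by Lemma \ref{any obj is a-free}, $M$ is a finitely generated free $A$-module. The standard $A$-linear map
\[
\Phi_{M,N}\colon M^\vee\otimes_A N\longrightarrow \Hom_A(M,N),\qquad \phi\otimes n\longmapsto\bigl(m\mapsto \phi(m)\,n\bigr),
\]
is therefore an isomorphism. The only real work is to check it intertwines the $\sigma$-actions. On the source $\sigma(\phi\otimes n)=\sigma(\phi)\otimes \sigma(n)$ where $\sigma(\phi)(m)=\sigma(\phi(\sigma^{-1}m))$; on the target, $\sigma$ sends the map $m\mapsto \phi(m)n$ to $m\mapsto \sigma\bigl(\phi(\sigma^{-1}m)\cdot n\bigr)=\sigma(\phi)(m)\cdot \sigma(n)$, which matches. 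For (1), an element $F\in \Hom_{A_q}(\mathcal{O},\mathcal{H}om(M,N))$ is determined by $F(1)\in\mathcal{H}om(M,N)$ and the condition $\sigma\cdot F(1)=F(1)$. Unraveling the $\sigma$-action on $\mathcal{H}om$, the invariance $\sigma(f(\sigma^{-1}m))=f(m)$ is exactly $\sigma$-equivariance of $f$, which combined with its $A$-linearity makes $f$ an $A_q$-linear map $M\to N$; conversely any $A_q$-linear $f$ gives a $\sigma$-invariant element. Hence the two $\bbC$-spaces are canonically identified.

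For (3) I would define the adjunction map
\[
\mathcal{H}om(K\otimes M,N)\longrightarrow \mathcal{H}om(K,\mathcal{H}om(M,N)),\qquad f\longmapsto\bigl(k\mapsto(m\mapsto f(k\otimes m))\bigr),
\]
with the usual inverse. This is an $A$-linear isomorphism by the classical tensor-hom adjunction, and a short direct calculation, exactly parallel to the one in (2), shows it respects the $\sigma$-action. The final chain of identifications in the ``in particular'' clause then follows by successive application of (3), involutivity of $(-)^{\vee\vee}$, and (2). For (4) I would use that $A_q\otimes_A M$ is the extension of scalars: the map $F\mapsto(m\mapsto F(1\otimes m))$ has inverse $f\mapsto(a\otimes m\mapsto a\,f(m))$, which is well-defined on the balanced tensor product because $f$ is $A$-linear; $A_q$-linearity on the left corresponds to no constraint on the right, yielding the advertised isomorphism. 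Projectivity of $A_q\otimes_A M$ is an immediate consequence: the functor $N\mapsto \Hom_{A_q}(A_q\otimes M,N)=\Hom_A(M,N)$ is exact because $M$ is $A$-free.

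I expect the main obstacle to be purely notational: several of the $\sigma$-actions (especially on $\mathcal{H}om$, which twists by $\sigma^{-1}$ on the source) are defined so that the ``obvious'' $A$-linear isomorphism is in fact the correct $A_q$-linear one, but one must write down the equivariance check carefully to see that signs/powers of $\sigma$ cancel. Once (2) is in hand the other parts reduce to standard adjunction arguments with the same style of verification.
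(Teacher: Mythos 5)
Your proofs of parts (1), (2), and (3) are correct and essentially the same as the paper's: establish the standard $A$-linear isomorphism and verify $\sigma$-equivariance directly. Your proof of (4), however, has a real gap.

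The issue is the $A_q$-module structure on $A_q\otimes_A M$. In this section the tensor product is the \emph{diagonal} one from the monoidal structure: $\sigma(x\otimes m)=\sigma x\otimes\sigma m$, not the usual extension-of-scalars structure $\sigma(x\otimes m)=\sigma x\otimes m$. Your proposed inverse $f\mapsto\bigl(a\otimes m\mapsto a\,f(m)\bigr)$ is well-defined and $A$-linear, but it is \emph{not} $\sigma$-linear for the diagonal action: one has
$$F\bigl(\sigma(a\otimes m)\bigr)=F(\sigma a\otimes\sigma m)=\sigma a\cdot f(\sigma m),\qquad \sigma F(a\otimes m)=\sigma a\cdot f(m),$$
and these agree only if $f(\sigma m)=f(m)$ for all $m$, which is false for a general $A$-linear $f$. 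Your remark that ``$A_q$-linearity on the left corresponds to no constraint on the right'' is precisely the tell that you were silently replacing the diagonal module structure with the extension-of-scalars one. (If the extension-of-scalars structure \emph{were} intended, the statement would be the trivial Hom-tensor adjunction, and the closing claim about projectivity of $A_q\otimes M$ would be content-free; the lemma is stated precisely because the module structure is the twisted one.)

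The fix is to twist the inverse. Every element of $A_q\otimes_A M$ has a unique expression $\sum_i\sigma^i\otimes m_i$ (since $A_q=\bigoplus_i A\sigma^i$ as a left $A$-module), and given $g\in\Hom_A(M,N)$ one should set
$$f\Bigl(\sum_i\sigma^i\otimes m_i\Bigr):=\sum_i \sigma^i\bigl(g(\sigma^{-i}m_i)\bigr).$$
This is the paper's formula (with a small typo there: $\sigma^{-1}$ should read $\sigma^{-i}$), and one checks directly that it is $A$-linear and $\sigma$-linear for the diagonal action, and inverse to $F\mapsto\bigl(m\mapsto F(1\otimes m)\bigr)$. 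Once this is in place, your final remark deducing projectivity of $A_q\otimes M$ (because $\Hom_A(M,-)$ is exact, $M$ being $A$-free) is correct.
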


\begin{proof} (1): Given a morphism $f\in \Hom _{A_q}(A,\mathcal{H}om(M,N))$ the map $f(1):M\to N$ is a morphism of $A_q$-modules. This gives the bijection.

(2) and (3): One only needs to check that the standard isomorphisms of $A$-modules are compatible with the action of $\sigma$.

(4) Given $f\in \Hom _{A_q}(A_q\otimes M,N)$ define the $A$-map $g:M\to N$ as $g(m):=f(1\otimes m)$.

To define a map in the other direction notice that
any element in $A_q\otimes M$ has a unique presentation as a finite sum  $\sum _i\sigma ^i\otimes m_i$. Also recall that the $A_q$-module structure on $A_q\otimes M$ is given by $\sigma (x\otimes m)=\sigma x\otimes \sigma (m)$.

For $g\in \Hom _A(M,N)$ define the corresponding $f\in \Hom _{A_q}(A_q\otimes M,N)$ as
$$f\Bigl(\sum _i\sigma ^i\otimes m_i\Bigr):=\sum _i\sigma ^i(g(\sigma ^{-1}(m_i))).$$
One checks that $f$ is indeed a morphism of $A_q$-modules and the correspondence $g\mapsto f$ is the inverse to the map $f\mapsto g$ defined above.
\end{proof}

\section{Important subcategories of the category $\mathcal{M}$}

Given an object $M\in \mathcal{M}$ we may consider it as an $S$-module.

\begin{defn} An object $M\in \mathcal{M}$ is {\bf free} (resp. {\bf torsion}, resp. {\bf torsion free}) if it is so as an  $S$-module.  Let $\mathcal{M}_{\free}\subset \mathcal{M}_{\tf}$ and $\mathcal{M}_{\tor}$ be the corresponding full subcategories of $\mathcal{M}$.
\end{defn}




\begin{lemma} \label{tor is a submodule} Let $M\in \mathcal{M}$ and let $M_{\tor}\subset M$ be its torsion $S$-submodule. Then $M_{\tor}$ is an $A_q$-submodule, i.e. $M_{\tor}\in \mathcal{M}_{\tor}$.
\end{lemma}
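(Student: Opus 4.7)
The plan is to exploit the single defining relation $\sigma z = qz\sigma$ of the algebra $A_q$, which translates into a clean "substitution" rule for how $z^{\pm 1}$ conjugates elements of $S = \bbC[\sigma,\sigma^{-1}]$.

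First I would observe that $M_{\tor}$ is already an $S$-submodule of $M$ by the very definition of torsion, so it suffices to check that it is stable under multiplication by $z$ and $z^{-1}$, since together with $S$ these generate $A_q$. The key computation uses the identity $\sigma^k z = q^k z \sigma^k$ (for all $k \in \bbZ$), which iterates the defining relation. Extending $\bbC$-linearly, for any $s(\sigma) = \sum c_k \sigma^k \in S$ one gets the ``intertwining'' relations
\[
s(\sigma)\cdot z = z\cdot s(q\sigma),\qquad s(\sigma)\cdot z^{-1} = z^{-1}\cdot s(q^{-1}\sigma),
\]
where $s(q\sigma):=\sum c_k q^k \sigma^k$ is again a nonzero element of $S$.

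Now let $m\in M_{\tor}$ and pick a nonzero $s(\sigma)\in S$ with $s(\sigma)m=0$. Then
\[
s(q^{-1}\sigma)\cdot (zm) = z\cdot s(q\cdot q^{-1}\sigma)\cdot m = z\cdot s(\sigma)\cdot m = 0,
\]
and since $s(q^{-1}\sigma)\neq 0$, this shows $zm\in M_{\tor}$. The symmetric computation $s(q\sigma)\cdot (z^{-1}m)=z^{-1}s(\sigma)m=0$ handles $z^{-1}m$. Therefore $M_{\tor}$ is an $A_q$-submodule of $M$.

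To finish the assertion $M_{\tor}\in \mathcal{M}_{\tor}$, it only remains to verify that $M_{\tor}$ lies in $\mathcal{M}$, i.e., that it is finitely generated as an $A$-module. But $M\in \mathcal{M}$ is finitely generated over the Noetherian ring $A$ (indeed free, by Lemma \ref{any obj is a-free}), so its $A$-submodule $M_{\tor}$ is automatically finitely generated over $A$. I do not foresee any real obstacle; the whole content is the two-line manipulation $z^{-1}s(\sigma)z = s(q\sigma)$, and the only thing to guard against is a sign error in the exponent of $q$, which is why I would write out the intertwining relations explicitly before applying them.
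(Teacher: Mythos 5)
Your proof is correct and is essentially the paper's argument: both hinge on the fact that conjugation by $z^{\pm 1}$ is an automorphism of $S$ (sending $\sigma$ to $q^{\mp 1}\sigma$), so $z^{\pm 1}$ carries $S$-torsion elements to $S$-torsion elements. The paper phrases this at the level of finite-dimensional $S$-submodules (``if $V\subset M$ is a finite-dimensional $S$-submodule, then so is $z^{\pm}V$'') rather than via annihilators of individual elements, but this is only a stylistic difference; you also spell out the routine step that $M_{\tor}\in\mathcal{M}$, which the paper leaves implicit.
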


\begin{proof} It suffices to prove that $z^{\pm}M_{\tor}\subset M_{\tor}$.
But this is clear: if $V\subset M$ is an $S$-submodule which is a finite-dimensional $\bbC$-vector space, then so is $z^{\pm}V$.
\end{proof}

\begin{cor}\label{first ex seq for e}  For any $M\in \mathcal{M}$ we have the functorial short exact sequence in $\mathcal{M}$
$$0\to M_{\tor}\to M\to M_{\tf}\to 0,$$
where $M\in \mathcal{M}_{\tor}$ and $M_{\tf}\in \mathcal{M}_{\tf}$.
\end{cor}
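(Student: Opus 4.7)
The plan is to deduce this essentially as a formal consequence of Lemma \ref{tor is a submodule}, together with the observation that the standard torsion/torsion-free sequence from commutative algebra is functorial.

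First I would define $M_{\tf} := M/M_{\tor}$, where $M_{\tor}$ is the $S$-torsion submodule of $M$. By Lemma \ref{tor is a submodule}, $M_{\tor}$ is an $A_q$-submodule, so this quotient makes sense in $A_q\text{-Mod}$. Since $M\in \mathcal{M}$ is finitely generated as an $A$-module and $M_{\tf}$ is an $A$-quotient of $M$, it is also finitely generated as an $A$-module; hence $M_{\tf}\in \mathcal{M}$. The exactness of
$$0\to M_{\tor}\to M\to M_{\tf}\to 0$$
in $\mathcal{M}$ is then automatic from the definition of $M_{\tf}$.

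Next I would check that $M_{\tf}$ is $S$-torsion free, so that it lies in $\mathcal{M}_{\tf}$. If $\bar{m}\in M_{\tf}$ is represented by $m\in M$ and satisfies $s\bar{m}=0$ for some nonzero $s\in S$, then $sm\in M_{\tor}$, so there exists nonzero $s'\in S$ with $s'sm=0$ in $M$; since $s's\neq 0$ (as $S$ is a domain), this forces $m\in M_{\tor}$ and hence $\bar{m}=0$. Thus $M_{\tf}$ is torsion free as an $S$-module.

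Finally, for functoriality, I would note that any morphism $f:M\to N$ in $\mathcal{M}$ is in particular $S$-linear, so $f(M_{\tor})\subseteq N_{\tor}$; this gives induced morphisms on the torsion and torsion-free parts, and the commutativity of the resulting diagram is immediate. There is essentially no obstacle here: the only subtle point — that $M_{\tor}$ is stable under the $\sigma$-action and hence is an honest $A_q$-submodule rather than just an $S$-submodule — was already resolved in the preceding lemma.
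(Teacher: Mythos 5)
Your proof is correct and follows exactly the route the paper intends: the corollary is stated as an immediate consequence of Lemma \ref{tor is a submodule}, and your argument simply spells out the routine details (the quotient is finitely generated over $A$ since $A$ is Noetherian, it is $S$-torsion free by the standard domain argument, and functoriality comes from $S$-linearity of morphisms). Nothing is missing and nothing is done differently.
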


\begin{lemma}\label{fin gen e-mod} Let $M\in \mathcal{M}$. Then
$M$ is finitely generated as an $S$-module if and only if $M\in \mathcal{M}_{\free}$.
\end{lemma}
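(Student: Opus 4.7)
The plan is to prove the two implications separately, using in both cases that $S = \bbC[\sigma, \sigma^{-1}]$ is a principal ideal domain.

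For the ``free implies finitely generated'' direction, I would invoke Corollary \ref{cor on cyclic} to write $M = A_q/I$ as a cyclic module; the ideal $I$ is nonzero because $A_q$ is not finitely generated as an $A$-module while $M$ is. Lemma \ref{comp of rank} then gives $\rk_S M = \dim_{K(S)} K(S) \otimes_S M < \infty$. If moreover $M$ is free as an $S$-module, its rank as a free module coincides with $\dim_{K(S)} K(S) \otimes_S M$, so it is finite, and $M$ is finitely generated over $S$.

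For the ``finitely generated implies free'' direction, since $S$ is a PID it suffices to show the $S$-torsion submodule $M_{\tor}$ vanishes. By Lemma \ref{tor is a submodule}, $M_{\tor}$ is an $A_q$-submodule of $M$, and as a submodule of a finitely generated module over the Noetherian ring $S$ it is itself finitely generated over $S$. Being a finitely generated torsion module over the PID $S$, $M_{\tor}$ is a finite-dimensional $\bbC$-vector space, so its $S$-support is a finite subset of $\Spec S = \bbC^*$. The heart of the argument is to show that this support is invariant under multiplication by $q$: the relation $\sigma z = qz\sigma$ iterates to $\sigma^n z = q^n z \sigma^n$, giving
$$p(\sigma) \cdot zm = z \cdot p(q\sigma) \cdot m$$
for every $m \in M$ and $p \in S$ (where $p(q\sigma)$ denotes the substitution of $q\sigma$ for $\sigma$). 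Since $M$ is $A$-free (Lemma \ref{any obj is a-free}), $z$ acts injectively on $M$, so the $S$-annihilator of $zm$ is the pullback of that of $m$ under $\sigma \mapsto q\sigma$; writing $\mathrm{Ann}_S(m) = (f(\sigma))$, this gives $\mathrm{Ann}_S(zm) = (f(\sigma/q))$, whose zero set is $q$ times the zero set of $f$. Thus $\Supp_S(zm) = q \cdot \Supp_S(m)$, and the same calculation with $z^{-1}$ gives stability of $\Supp_S M_{\tor}$ under multiplication by $q^{\bbZ}$. Since $q$ is not a root of unity and the support is finite, it must be empty, forcing $M_{\tor} = 0$.

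The only nontrivial ingredient is the $q$-invariance of the support via the commutation identity; once this is in place, the rest is a routine assembly of Lemma \ref{comp of rank}, Lemma \ref{tor is a submodule}, Lemma \ref{any obj is a-free}, and Corollary \ref{cor on cyclic}. I do not anticipate substantive difficulties beyond keeping the direction of the $q$-shift consistent with the convention $\sigma z = qz\sigma$.
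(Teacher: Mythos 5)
Your proof is correct and takes essentially the same route as the paper: the forward direction via Corollary \ref{cor on cyclic} and Lemma \ref{comp of rank} to bound $\rk_S M$, and the converse by killing the $S$-torsion because its support in $\Spec S = \bbC^*$ is $q^{\bbZ}$-invariant yet finite. The paper's proof of the converse just says to ``argue as in Lemma \ref{any obj is a-free} after interchanging $A$ and $S$'', and your computation $\Supp_S(zm) = q\cdot\Supp_S(m)$ correctly supplies those details.
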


\begin{proof} If $M\in \mathcal{M}_{\free}$, then $M$ is a finitely generated $S$-module because $\rk_SM<\infty$ by Lemma \ref{comp of rank}.
Conversely, let $M$ be a finitely generated $S$-module. Then arguing as in Lemma \ref{any obj is a-free} (after interchanging the algebras $A$ and $S$) we find that $M$ is free over $S$.
\end{proof}

\section{Description of the category $\mathcal{M}_{\tor}$}

Let $S_{\fd}$ be the full subcategory of $S\text{-mod}$ consisting of $S$-modules finite-dimensional over $\bbC$.
It is a tensor category with respect to the tensor product over $\bbC$.
Via Jordan decomposition, $S_{\fd}$ is equivalent to the tensor category of finite dimensional $\bbC ^*$-graded vector spaces with a nilpotent operator. Let $\bbC _q\in S_{\fd}$ be the $1$-dimensional $S$-module
where $\sigma (v)=qv$. The tensor product functor $\bbC _q\otimes (-)$ is an autoequivalence of the abelian category $S_{\fd}$ which generates the action of the group $q^{\bbZ}$ on $S_{\fd}$. The quotient category $S_{\fd}/q^{\bbZ}$
(the {\it orbit category}) is equivalent to the tensor category of finite dimensional $\bbC ^*/q^{\bbZ}$-graded vector spaces with a nilpotent operator.

Recall that by definition the objects of the category $S_{\fd}/q^{\bbZ}$ are the same as objects in $S_{\fd}$ and the morphisms between $V$ and $W$ are
given by the direct sum
$$\bigoplus _{\gamma \in q^{\bbZ}}\Hom _{S_{\fd}}(V,\gamma (W).$$

We have the exact functor
\begin{equation} \theta :S_{\fd}\to \mathcal{M}_{\tor},\quad
V\mapsto A\otimes _{\bbC} V,
\end{equation}
where $\sigma (z^n\otimes v)=\sigma (z^n)\otimes \sigma (v)=q^nz^n\otimes \sigma (v)$.

\begin{lemma} \label{lemma on theta en equiv} The functor $\theta$ induces the equivalence of  categories
\begin{equation} \label{equiv of tensor cat}
\overline{\theta }:S_{\fd}/q^{\bbZ}\to \mathcal{M}_{\tor}
\end{equation}
\end{lemma}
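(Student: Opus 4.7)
The plan is to check that $\overline{\theta}$ is both fully faithful and essentially surjective. Since $\theta$ is defined on $S_{\fd}$, the first task is to verify naturality in the $q^{\bbZ}$-action. The $A_q$-linear map $\theta(V)\to \theta(\bbC_q\otimes V)$ given by $z^n\otimes v\mapsto z^{n-1}\otimes v$ intertwines the two $\sigma$-actions (on $\bbC_q\otimes V$ the operator $\sigma$ acts as $q\sigma_V$), and iterating yields natural isomorphisms $\theta(V)\cong\theta(\bbC_{q^n}\otimes V)$ that induce the functor $\overline{\theta}$ on the orbit category.

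For full faithfulness, I would unfold $\Hom_{A_q}(\theta(V),\theta(W))$ directly. Any $A_q$-linear map $f$ is determined by $f(1\otimes v)=\sum_{n}z^n\otimes w_n(v)$ with $\bbC$-linear $w_n:V\to W$, almost all zero. The identity $f\circ\sigma=\sigma\circ f$ on $1\otimes v$ translates into $w_n\circ\sigma_V=q^n\sigma_W\circ w_n$, i.e., $w_n\in\Hom_S(V,\bbC_{q^n}\otimes W)$. Summing over $n$ gives $\Hom_{A_q}(\theta(V),\theta(W))=\bigoplus_{n\in\bbZ}\Hom_S(V,\bbC_{q^n}\otimes W)$, which is precisely $\Hom_{S_{\fd}/q^{\bbZ}}(V,W)$ by definition; verification that composition matches is routine.

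For essential surjectivity, given $M\in\mathcal{M}_{\tor}$, I decompose $M=\bigoplus_{\lambda\in\bbC^*}M[\lambda]$ into generalized $\sigma$-eigenspaces, which is legitimate since $M$ is $S$-torsion. The commutation $\sigma z=qz\sigma$ implies that $z$ restricts to a $\bbC$-linear bijection $M[\lambda]\xrightarrow{\sim}M[q\lambda]$, so the support $\{\lambda:M[\lambda]\ne 0\}$ is $q^{\bbZ}$-stable. Using an $A$-basis $m_1,\ldots,m_r$ of $M$ (provided by Lemma \ref{any obj is a-free}) together with polynomials $p_i(\sigma)\in S$ killing each $m_i$, the $S$-support of $M$ is contained in the union of the $q^{\bbZ}$-orbits of the roots of the $p_i$, hence in finitely many orbits. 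Then $M=\bigoplus_O M_O$, where $M_O:=\bigoplus_{\lambda\in O}M[\lambda]$ is an $A_q$-submodule. Choosing $\lambda_0\in O$ and setting $V_O:=M[\lambda_0]$, the $A_q$-linear map $\theta(V_O)\to M_O$, $z^n\otimes v\mapsto z^nv$, is surjective (since $M_O=\sum_n z^nV_O$) and injective (because the $z^nV_O$ sit in distinct eigenspaces, hence in direct sum). Comparing $A$-ranks forces $\dim_{\bbC}V_O<\infty$, and assembling over the finitely many orbits gives $M\cong\theta\bigl(\bigoplus_O V_O\bigr)$.

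The main obstacle is the last paragraph: confirming that the $\sigma$-eigenspace decomposition is compatible with the $A$-module structure in the strong form ``$M_O$ is freely generated by $V_O$ over $A$''. Both ingredients rest crucially on $q$ not being a root of unity: this ensures eigenvalues in a single $q^{\bbZ}$-orbit are pairwise distinct, so $\bbC$-independence of different $z^nV_O$ comes for free, and it is also what allows the finite-dimensionality of $V_O$ to be read off from $\rk_A M_O$.
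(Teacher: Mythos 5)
Your proposal is correct and follows the same route as the paper: decomposing $M$ into generalized $\sigma$-eigenspaces, using that $z$ shifts eigenvalues within a $q^{\bbZ}$-orbit, and picking an orbit representative to extract a finite-dimensional $S$-module that $\theta$ sends to $M$. Your full-faithfulness computation unfolds by hand the adjunction $\Hom_{A_q}(\theta(V),\theta(W))\simeq\Hom_S(V,\theta(W))$ (viewing $\theta(V)\simeq A_q\otimes_S V$) that the paper invokes directly, but the underlying argument is identical.
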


\begin{proof} Let $M\in \mathcal{M}_{\tor}$. Then ${}_SM=\oplus M_\lambda$, where $M_\lambda \subset M$ is the generalized $\lambda$-eigenspace of $\sigma$, i.e. it consists of elements $m\in M$ that are annihilated by some power of $\sigma -\lambda$.

Choose a set $\Gamma \subset \bbC ^*$ of coset representatives for the subgroup $q^{\bbZ}\subset \bbC ^*$.

Let $m\in M_\lambda$. Then $\sigma (zm)=q\lambda zm$, i.e. $z(M_\lambda)=M_{q\lambda}$. It follows that the $A_q$-module $M$ is generated by its subspace
$$\delta (M):=\bigoplus _{\lambda \in \Gamma}M_\lambda.$$
Moreover $M$ is freely generated by $\delta (M)$ as an $A$-module. In particular, the space $\delta (M)$ is finite dimensional, i.e. $\delta (M)\in S_{\fd}$, and we have an isomorphism of $A_q$-modules
$\theta (\delta (M))\simeq M$. Therefore the functor $\theta$ is essentially surjective.

To show that $\theta $ induces a fully faithful functor $\overline{\theta }:S_{\fd}/q^{\bbZ}\to \mathcal{M}_{\tor}$, it suffices to notice that for $V,W\in S_{\fd}$ we have an isomorphism of vector spaces
$$\Hom _{A_q}(\theta (V),\theta (W))\simeq \Hom _S(V,\theta (W))=\bigoplus _{\gamma \in q^{\bbZ}}\Hom _{S_{\fd}}(V,\gamma (W))=\Hom _{S_{\fd}/q^{\bbZ}} (V,W).$$
\end{proof}

Lemma \ref{lemma on theta en equiv} (and its proof) have the following immediate consequences which we record for future reference.

\begin{cor} \label{cor that tor is a direct sum of fin dim} Let $M\in \mathcal{M}_{\tor}$. Then

(1) There is an isomorphism of $A_q$-modules $M=A\otimes _{\bbC}V$ for a finite dimensional $S$-module $V$.

(2) $M$ is a direct sum of its finite dimensional $S$-submodules  and every $\sigma$-eigenvalue in $M$ has finite multiplicity.

(3) In particular, the spaces of $\sigma$-invariants and $\sigma$-coinvariants of $M$ have the same finite dimension.
\end{cor}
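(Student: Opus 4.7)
The plan is to harvest all three statements directly from the construction inside the proof of Lemma \ref{lemma on theta en equiv}, supplementing it with a short rank-count to make finite-dimensionality explicit. For part (1), the proof of Lemma \ref{lemma on theta en equiv} already exhibits the required isomorphism: setting $V := \delta(M) = \bigoplus_{\lambda \in \Gamma} M_\lambda$, one has $M \simeq \theta(\delta(M)) = A\otimes_{\bbC}\delta(M)$ as $A_q$-modules. The only point not spelled out there is that $\delta(M)$ is finite-dimensional. I would justify this as follows: $M$ is $A$-free of finite rank $n$ by Lemma \ref{any obj is a-free}, and multiplication by $z$ is a unit of $A_q$ carrying the generalized $\sigma$-eigenspace $M_\lambda$ bijectively onto $M_{q\lambda}$. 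Hence choosing a $\bbC$-basis of each $M_\mu$ for $\mu \in \Gamma$ and translating by powers of $z$ produces an $A$-basis of $M$, forcing $\dim_{\bbC}\delta(M)=\rk_A M=n<\infty$.

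For part (2), the $S$-torsion hypothesis on $M$ yields the decomposition $M=\bigoplus_\lambda M_\lambda$ into generalized $\sigma$-eigenspaces (every element of $M$ is annihilated by some nonzero polynomial in $\sigma$, and generalized eigenspaces for distinct eigenvalues are independent). Each $M_\lambda$ is an $S$-submodule, and the bijection $M_\lambda\xrightarrow{\sim}M_{q\lambda}$ used in (1), together with finiteness of $\dim_{\bbC}\delta(M)$, forces every $M_\lambda$ to be finite-dimensional. This simultaneously writes $M$ as a direct sum of finite-dimensional $S$-submodules and shows that every $\sigma$-eigenvalue of $M$ occurs with finite multiplicity.

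For part (3), I would reduce the computation of both $\sigma$-invariants and $\sigma$-coinvariants to the single summand $M_1$. On $M_\lambda$ with $\lambda\neq 1$ the operator $\sigma-1=(\sigma-\lambda)+(\lambda-1)\cdot\id$ is a nonzero scalar plus a nilpotent, hence invertible, so such summands contribute nothing to either $\ker(\sigma-1)$ or $\coker(\sigma-1)$. Thus the $\sigma$-invariants are $\ker((\sigma-1)|_{M_1})$ and the $\sigma$-coinvariants are $M_1/(\sigma-1)M_1$. Since $M_1$ is finite-dimensional by (2) and $\sigma-1$ acts nilpotently on it, rank-nullity for a nilpotent endomorphism of a finite-dimensional vector space yields equal $\bbC$-dimensions. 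The only mildly subtle point in the whole argument is the finite-dimensionality of $\delta(M)$, which the rank-count in part (1) settles once and for all; everything else is pure bookkeeping of the eigenspace decomposition already implicit in Lemma \ref{lemma on theta en equiv}.
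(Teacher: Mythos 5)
Your proposal is correct and follows essentially the same route as the paper: the paper presents the corollary as an immediate consequence of the proof of Lemma \ref{lemma on theta en equiv}, which already introduces $\delta(M)=\bigoplus_{\lambda\in\Gamma}M_\lambda$, shows $M$ is freely generated over $A$ by $\delta(M)$ (hence $\dim_{\bbC}\delta(M)=\rk_AM<\infty$ by Lemma \ref{any obj is a-free}), and exhibits $M\simeq A\otimes_{\bbC}\delta(M)$; your parts (2) and (3) then unpack the eigenspace decomposition and the reduction to $M_1$ exactly as the authors intend. The only stylistic quibble is that the phrase ``translating by powers of $z$ produces an $A$-basis'' should really say that a $\bbC$-basis of $\delta(M)$ is itself an $A$-basis of $M$ (the $z$-translates give the $\bbC$-spanning, not an $A$-basis), but the conclusion $\dim_{\bbC}\delta(M)=\rk_AM$ and everything downstream is sound.
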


\subsection{The category $\mathcal{M}_{\tor}$ is preserved by the duality $(-)^\vee :M\mapsto M^\vee$ and the tensor product}

\begin{lemma}\label{tor is pres by dual} The category $\mathcal{M}_{\tor}$ is preserved by duality. More precisely, if $M=A\otimes _{\bbC }V$ for a finite dimensional $S$-module $V$ (Corollary \ref{cor that tor is a direct sum of fin dim}), then
$M^\vee =A\otimes _{\bbC} V^*$.
\end{lemma}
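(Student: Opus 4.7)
The plan is to unpack the definition of $M^\vee = \mathcal{H}om(M,\mathcal{O})$ under the hypothesis $M = A\otimes_{\bbC} V$ and check that the natural candidate isomorphism with $A\otimes_{\bbC} V^*$ respects both the $A$-module and the $\sigma$-module structures. Once this is done, finite-dimensionality of $V^*$ together with Corollary \ref{cor that tor is a direct sum of fin dim}(1) immediately places $M^\vee$ in $\mathcal{M}_{\tor}$, giving both the preservation statement and the explicit description.

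Step one is purely $A$-linear. Since $M = A\otimes_{\bbC} V$ is free over $A$ with basis any $\bbC$-basis $\{v_i\}$ of $V$, and since $\mathcal{O}\cong A$ as an $A$-module, there is a tautological isomorphism of $A$-modules
\[
\Hom_A(A\otimes_{\bbC} V,\,\mathcal{O}) \;\cong\; \Hom_{\bbC}(V,A) \;\cong\; A\otimes_{\bbC} V^*,
\]
where $z^k\otimes v_j^*$ corresponds to the $A$-linear map $z^n\otimes v_i\mapsto \delta_{ij}\, z^{n+k}$.

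Step two is a direct computation of the $\sigma$-action. Using the definition $\sigma(f)(m) = \sigma(f(\sigma^{-1}m))$, the action of $\sigma$ on $M$ and on $\mathcal{O}$ (where $\sigma\cdot z^n = q^n z^n$), I would take $f$ corresponding to $z^k\otimes v_j^*$ and evaluate $\sigma(f)$ on a basis element $z^n\otimes v_i$. The factor $q^{-n}$ coming from $\sigma^{-1}(z^n\otimes v_i) = q^{-n}z^n\otimes \sigma^{-1}(v_i)$ and the factor $q^{n+k}$ coming from the $\sigma$-action on $z^{n+k}\in\mathcal{O}$ combine to give exactly $q^k$, matching the formula prescribed by the functor $\theta$ on $V^*$ (where $\sigma$ on $V^*$ is dual to $\sigma^{-1}$ on $V$, i.e. the standard contragredient $S$-module structure). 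This is the one place requiring care; all the exponents of $q$ have to cancel in precisely the right way, but this is routine bookkeeping rather than a genuine obstruction.

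Step three concludes: $V^*$ is a finite-dimensional $S$-module, so $A\otimes_{\bbC} V^* = \theta(V^*)$ lies in $\mathcal{M}_{\tor}$ by construction of $\theta$. Thus $M^\vee \simeq A\otimes_{\bbC} V^*$ as $A_q$-modules, which simultaneously shows that the duality preserves $\mathcal{M}_{\tor}$ and gives the explicit description requested. The main potential obstacle is really just keeping track of conventions (in particular whether one treats $V^*$ as a left or right $S$-module and how the adjoint of $\sigma^{-1}$ is interpreted), which the explicit basis computation above resolves.
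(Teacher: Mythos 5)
Your proof is correct and follows the same route as the paper: both identify $M^\vee = \Hom_A(A\otimes_{\bbC}V, A)$ with $\Hom_{\bbC}(V,A) \cong A\otimes_{\bbC}V^*$ via the standard tensor--hom isomorphisms. The only difference is that the paper simply asserts these are natural isomorphisms of $A_q$-modules, whereas you explicitly carry out the bookkeeping check that the $\sigma$-actions (the $q^{-n}$ from $\sigma^{-1}$ on $M$, the $q^{n+k}$ from $\sigma$ on $\mathcal{O}$, and the contragredient action on $V^*$) line up; that verification is correct.
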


\begin{proof} Indeed, we have the sequence of natural isomorphisms of $A_q$-modules
$$M^\vee =\Hom _A(A\otimes _{\bbC}V,A)=\Hom _{\bbC}(V,A)=A\otimes _{\bbC}\Hom _{\bbC}(V,\bbC)=A\otimes _{\bbC}V^*.$$
\end{proof}

\begin{lemma} \label{tor is pres by the tensor product} If $M,N\in \mathcal{M}_{\tor}$, then $M\otimes N\in \mathcal{M}_{\tor}$.
\end{lemma}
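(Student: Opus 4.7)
The plan is to invoke Corollary \ref{cor that tor is a direct sum of fin dim}, which identifies every object of $\mathcal{M}_{\tor}$ as lying in the essential image of the functor $\theta$, and then to compute $M \otimes_A N$ directly using this explicit description.

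First I would write $M \simeq \theta(V) = A \otimes_{\bbC} V$ and $N \simeq \theta(W) = A \otimes_{\bbC} W$ for some finite-dimensional $S$-modules $V, W$. Next, I would observe that the natural map
$$(a \otimes v) \otimes_A (b \otimes w) \mapsto ab \otimes (v \otimes w)$$
gives an $A$-module isomorphism
$$M \otimes_A N = (A \otimes_{\bbC} V) \otimes_A (A \otimes_{\bbC} W) \xrightarrow{\sim} A \otimes_{\bbC} (V \otimes_{\bbC} W),$$
since $A \otimes_{\bbC} V$ is free over $A$ on any $\bbC$-basis of $V$.

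Then I would verify that this map is compatible with the $\sigma$-action, identifying the right-hand side with $\theta(V \otimes_{\bbC} W)$ where $V \otimes_{\bbC} W$ carries the diagonal $S$-module structure. This is a direct computation on pure tensors: the diagonal action (from Section \ref{sect def of tensor str}) sends $(1 \otimes v) \otimes (1 \otimes w)$ to $(1 \otimes \sigma(v)) \otimes (1 \otimes \sigma(w))$, which corresponds to $1 \otimes (\sigma(v) \otimes \sigma(w)) = \sigma\bigl(1 \otimes (v \otimes w)\bigr)$ under the identification above, which is precisely the $\theta$-structure with diagonal $\sigma$-action on $V \otimes_{\bbC} W$.

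Since $V$ and $W$ are finite-dimensional over $\bbC$, so is $V \otimes_{\bbC} W$, hence $V \otimes_{\bbC} W \in S_{\fd}$ and $M \otimes_A N \simeq \theta(V \otimes_{\bbC} W) \in \mathcal{M}_{\tor}$. There is no real obstacle here; the whole argument is essentially a bookkeeping exercise in the structural description of $\mathcal{M}_{\tor}$ via $\theta$, the only minor care being the verification that the diagonal $\sigma$-action on $M \otimes_A N$ matches the one coming from $\theta$ applied to the diagonal $S$-module $V \otimes_{\bbC} W$.
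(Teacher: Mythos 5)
Your proposal is correct and follows exactly the paper's argument: identify $M\simeq A\otimes_{\bbC}V$ and $N\simeq A\otimes_{\bbC}W$ via Corollary \ref{cor that tor is a direct sum of fin dim}, compute $M\otimes_AN\simeq A\otimes_{\bbC}(V\otimes_{\bbC}W)$ compatibly with the $\sigma$-action, and conclude. You simply spell out the $\sigma$-equivariance check that the paper leaves implicit.
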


\begin{proof} If $M=A\otimes _{\bbC}V$ and $N=A\otimes _{\bbC}W$, then
$M\otimes _AN=A\otimes _{\bbC}(V\otimes _{\bbC}W)$.
\end{proof}

\begin{remark}
The above results show that the functor $\overline{\theta}$ in
Lemma \ref{lemma on theta en equiv} is an equivalence of rigid tensor categories.
\end{remark}

\section{Structure of the category $\mathcal{M}_{\tf}$}


\begin{remark} \label{remark on vanishing of hom} Clearly, $\Hom (\mathcal{M}_{\tor},\mathcal{M}_{\tf})=0$. Also it follows from Lemma \ref{fin gen e-mod} that
$$\Hom (\mathcal{M}_{\free},\mathcal{M}_{\tor})=0.$$
Indeed, for any $M\in \mathcal{M}_{\free}$, $N\in \mathcal{M}_{\tor}$ the image of a homomorphism $f:M\to N$ is an $A_q$-submodule which is finitely generated over $S$. By Lemma \ref{fin gen e-mod} this submodule is free over $S$, hence is zero.
\end{remark}

\subsubsection{Do the abelian categories $\mathcal{M}_{\tor}$ and $\mathcal{M}_{\free}$ generate $\mathcal{M}$?} One might hope that for any
$M\in \mathcal{M}_{\tf}$ there exists a short exact sequence in $\mathcal{M}$
$$0\to M_{\free}\to M\to M_{\tor}\to 0,$$
where $M_{\free}\in \mathcal{M}_{\free}$ and $M_{\tor}\in \mathcal{M}_{\tor}$.

This is not the case. We construct a counterexample in Lemma \ref{cor with counter ex}.

\begin{lemma} \label{cor with counter ex} Consider the element $s(z,\sigma) =z-(\sigma +\sigma ^{-1})\in A_q$ and let $$M:=A_q/A_q\cdot s.$$
Then

(1) $M$ is finitely generated over $A$ (i.e. $M\in \mathcal{M}$) and not finitely generated over $S$.

(2) $\rk_AM=2$ and $\rk_SM=1$.

(3) $M\in \mathcal{M}_{\tf}$.

(4) $M$ is a simple $A_q$-submodule.
\end{lemma}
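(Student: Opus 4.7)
My plan begins by making $M$ concrete. The defining relation $(z - \sigma - \sigma^{-1})e = 0$, where $e$ denotes the image of $1 \in A_q$, yields $\sigma^{-1} e = ze - \sigma e$ and $\sigma^2 e = qz\sigma e - e$. A $\sigma$-degree count (elements of the principal ideal $A_q s$ have $\sigma$-degree $\geq \deg_\sigma s = 2$) shows that $\{e,\sigma e\}$ is an $A$-basis of $M$, with $\sigma$ represented in this basis by $\left(\begin{smallmatrix}0 & -1 \\ 1 & qz\end{smallmatrix}\right)$.

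Parts (1) and (2) then follow directly from Remark \ref{rem on princ ideals} and Lemma \ref{comp of rank} applied to $A_q\cdot s$: the extreme $\sigma$-coefficients of $s$ are both the unit $-1\in A$, so $s$ is $\sigma$-good and $M\in\mathcal{M}$; the $z^0$-coefficient of $s$ is $-\sigma-\sigma^{-1}$, not a unit in $S$, so $s$ is not $z$-good and $M$ is not finitely generated over $S$. From $\deg_\sigma s = 2$ and $\deg_z s = 1$ one reads off $\rk_A M = 2$ and $\rk_S M = 1$.

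The core of the proof is the following key claim, which will deliver both (3) and (4): \emph{there is no nonzero $v\in M$ satisfying $\sigma v = cz^n v$ for any $c\in\bbC^*$, $n\in\bbZ$.} Writing $v = a_0(z) e + a_1(z)\sigma e$, the matrix action gives two equations; eliminating $a_1$ produces the single functional equation
$$a_0(qz) - cq\, z^{n+1} a_0(z) + c^2 q^{-n} z^{2n} a_0(q^{-1}z) = 0.$$
If $a_0 = \sum_k \alpha_k z^k$ has extremal nonzero exponents $L \leq P$, the three summands occupy $z$-ranges $[L,P]$, $[L+n+1,P+n+1]$, $[L+2n,P+2n]$. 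A short case split on $n$ shows that either a top or bottom extremal $z$-coefficient comes from a \emph{single} summand and equals a nonzero scalar multiple of $\alpha_L$ or $\alpha_P$: for $n \geq 1$ the coefficient at $z^L$ is $q^L\alpha_L$ from the first summand; for $n \leq -2$ the coefficient at $z^P$ is $q^P\alpha_P$ from the first; for $n = 0$ the coefficient at $z^{P+1}$ is from the second; for $n = -1$ the coefficient at $z^{L-2}$ is from the third. Each case forces $\alpha_L = 0$ or $\alpha_P = 0$, a contradiction. The case bookkeeping here is the only place where genuine work is required and is the main technical obstacle, but the four cases are strictly parallel.

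Given the claim, (3) follows by specializing $n = 0$: if $M_{\tor} \neq 0$, then by Lemma \ref{tor is a submodule} and Corollary \ref{cor that tor is a direct sum of fin dim} we have $M_{\tor} \cong A\otimes_\bbC V$ for some nonzero finite-dimensional $S$-module $V$, which contains a $\sigma$-eigenvector, contradicting the claim. For (4), any nonzero proper $A_q$-submodule $N \subset M$ has $A$-rank exactly $1$, since rank $0$ or rank $2$ would force $N$ or $M/N$ to be $A$-torsion, contradicting Lemma \ref{any obj is a-free}. Since $A = \bbC[z,z^{-1}]$ is a PID and $N$ is finitely generated, torsion-free, of rank one, $N = A\cdot v$ for some $v\in M$; the conditions $\sigma v,\sigma^{-1} v \in Av$ force $\sigma v = f(z) v$ and $\sigma^{-1} v = g(z) v$ with $g(qz)f(z) = 1$, making $f$ a unit of $A$, i.e. $f(z) = cz^n$---again contradicting the claim.
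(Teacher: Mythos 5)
Your proof is correct and follows essentially the same route as the paper: parts (1) and (2) via the $\sigma$-good/$z$-good criteria, and parts (3)--(4) reduced to showing no nonzero $v\in M$ satisfies $\sigma v = cz^n v$, leading to the identical functional equation in $a_0$ and the same four-case extremal-coefficient analysis (you merely group the cases slightly differently and re-derive the line-bundle normal form rather than citing \eqref{line bundle}). The only substantive presentational difference is that you isolate the eigenvector non-existence as a standalone claim from which both (3) and (4) fall out, whereas the paper proves simplicity directly and deduces (3) from it; both organizations are sound.
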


\begin{proof}
(1) This follows from Remark \ref{rem on princ ideals}, since the polynomial $s$ is $\sigma $-good and not $z$-good.

(2) We have $\rk_AM=\deg_\sigma s=2$ and
$\rk_SM=\deg _zs=1$.

(3), (4) Since $M_{\tor}$ is an $A_q$-submodule of $M$ it suffices to prove that $M$ is simple, i.e. that $A_q\cdot s$ is a maximal left ideal.

Let $e\in M$ be the image of $1\in A_q$. It is clear that $\{e,\sigma e\}$ is an $A$-basis of $M$. The action of $\sigma $ in this basis is given by the formulas
\begin{equation}\label{formulas} e\mapsto \sigma e,  \quad \sigma e\mapsto qz\sigma e -e.
\end{equation}
Suppose that $M$ contains a proper $A_q$-submodule $N\subset M$. Then $\rk_AN=1$, i.e. $N$ is a line bundle. Therefore there exists $0\neq n\in N$ such that $(\sigma -cz^k)n=0$ for some $z\in \bbZ$ and $c\in \bbC ^*$ (see  \eqref{line bundle}). 
Suppose we have such an element $0\neq n\in M$, $n=p_1e+p_2\sigma e$ for $p_1,p_2\in A$. Then we have 
\begin{equation}\label{calc} \sigma n=cz^k(p_1e+p_2\sigma e)=p_1^\sigma \sigma e+p_2^\sigma (qz\sigma e-e),
\end{equation}
where $p^\sigma :=\sigma p \sigma ^{-1}$. Equating the coefficients of $e$ and $\sigma e$ in \eqref{calc} we find that 
$$cz^kp_1=-p_2^\sigma,\quad cz^kp_2=p_1^\sigma+qzp_2^\sigma.$$
Substituting $p_2=-cq^{-k}z^kp_1^{\sigma ^{-1}}$ into the second equation we get 
\begin{equation}\label{final}
-c^2q^{-k}z^{2k}p_1^{\sigma ^{-1}}+cqz^{k+1}p_1-p_1^\sigma =0.
\end{equation}

If $k>1$ and $l$ is the highest integer such that the $z^l$ coefficient of $p_1$ is non-zero, then the $z^{l+2k}$ coefficient in \eqref{final} is non-zero.
If $k=1$ and $l$ is the lowest integer such that the $z^l$ coefficient of $p_1$ is non-zero, then the $z^l$ coefficient in \eqref{final} in non-zero.
If $k=0$ and $l$ is the highest integer such that the $z^l$ coefficient of $p_1$ is non-zero, then the $z^{l+1}$ coefficient in \eqref{final} is non-zero.
Finally, if $k\le -1$ and $l$ is the lowest integer such that the $z^l$ coefficient of $p_1$ is non-zero, then the $z^{l+2k}$ coefficient in \eqref{final} in non-zero.
In every case, therefore, $p_1$ is non-zero, and this implies $p_2=0$, so
$n=0$, which completes the proof of the lemma.
\end{proof}

\subsection{The structure of  $M\in \mathcal{M}$ as an $S$-module}

The algebra $A=\bbC [z,z^{-1}]$ can be considered as the group algebra of the cyclic group $\Gamma$ with the generator $z$. This group acts on the algebra $S=\bbC [\sigma ,\sigma ^{-1}]$ via conjugation in $A_q$:
$$z(\sigma ):=z\sigma z^{-1}=q^{-1}\sigma.$$
It induces the $\Gamma$-action on $\Spec\, S=\bbC ^*$: $z(p)=qp$.
Let $N$ be an $S$-module. For $\lambda \in \bbC ^*$
denote by $N_\lambda \subset N$ the submodule consisting of elements annihilated by some power of the operator $\sigma -\lambda$. If $N$ is a torsion $S$-module, then $N=\oplus _{\lambda \in \bbC ^*}N_\lambda$. If moreover $N$ is an $A_q$-module, then $zN_\lambda =N_{q\lambda}$.

If $M\in \mathcal{M}_{\tor}$ then the structure of the $S$-module ${}_SM$ is given by Corollary \ref{cor that tor is a direct sum of fin dim}. The next proposition determines the $S$-module structure of $M\in \mathcal{M}_{\tf}$.

\begin{prop} \label{str of s-mod in m}  Assume that $M\in \mathcal{M}_{\tf}$. One can choose a free $S$-submodule $M_0\subset M$ of full rank, so that the $S$-module $M/M_0$ is torsion and has the following properties: $M/M_0=\oplus _{\lambda \in \bbC ^*}(M/M_0)_\lambda$, where each $(M/M_0)_\lambda$ is finite dimensional and the collection of $\lambda$'s for which $(M/M_0)_\lambda \neq 0$ is contained in a finite number of $\Gamma$-orbits.
\end{prop}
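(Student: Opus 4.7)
The plan is to exploit the cyclic presentation from Corollary~\ref{cor on cyclic}: write $M = A_q\,e$, so $M = A_q/I$ with $I = \mathrm{Ann}(e)$. By Lemma~\ref{comp of rank}, $r := \rk_S M$ is the minimum $z$-degree of a nonzero element of $I$; pick such an element and, after left-multiplying by a suitable power of $z$, bring it to the form $w = w_0(\sigma) + w_1(\sigma)z + \cdots + w_r(\sigma)z^r$ with $w_0, w_r \ne 0$. The candidate submodule will be
\[
M_0 := Se + Sze + \cdots + Sz^{r-1}e = \sum_{i=1}^r Sv_i, \quad \text{with } v_i := z^{i-1}e.
\]

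First I would verify that $v_1,\ldots,v_r$ form a $K(S)$-basis of $M_K := K(S)\otimes_S M$: any $K(S)$-relation among them would, after clearing denominators, yield a nonzero element of $I$ of $z$-degree less than $r$, contradicting minimality. Since $M$ is $S$-torsion-free and the $v_i$ are $K(S)$-independent, $M_0$ is free of rank $r$ over $S$. Because $M_0 \otimes_S K(S) = M_K$, the quotient $M/M_0$ is $S$-torsion; since $S$ is a Dedekind domain, it decomposes automatically as $\bigoplus_\lambda (M/M_0)_\lambda$.

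The remaining two properties come from analyzing the $\phi_z$-semilinear action of $z$ on $M_K$ in the basis $(v_i)$, where $\phi_z$ denotes the automorphism $\sigma\mapsto q^{-1}\sigma$ of $K(S)$. The matrix $Z$ of $z$ has companion type, with last column $-w_{i-1}/w_r$; by induction the matrix of $z^l$ is the twisted product $Z\cdot\phi_z(Z)\cdots\phi_z^{l-1}(Z)$, whose common denominator divides $\prod_{j=0}^{l-1} w_r(q^{-j}\sigma)$, and analogously for $z^{-l}$ with $w_0(q^j\sigma)$ in place of $w_r(q^{-j}\sigma)$. Since the $A$-generators $\sigma^j e$ of $M$ all lie in $Se\subset M_0$, we have $M = A\cdot M_0 = \sum_{l\in\bbZ} z^l M_0$, so every element of $M/M_0$ is $\sigma$-supported inside the union of the $\Gamma$-orbits of the (finitely many) roots of $w_0$ and $w_r$.

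The main obstacle will be proving that each $(M/M_0)_\lambda$ is finite-dimensional, since \emph{a priori} the denominators $\prod_j w_r(q^{-j}\sigma)$ grow unboundedly with $l$. The crucial observation is that for fixed $\lambda$, a factor $w_r(q^{-j}\sigma)$ vanishes at $\sigma = \lambda$ only when $q^{-j}\lambda$ is a root of $w_r$, and because $q$ is not a root of unity the orbit $q^\bbZ\lambda$ meets the finite set $\mathrm{roots}(w_r)$ in at most $\deg w_r$ points. Consequently the pole order at $\lambda$ of any entry of the matrix of $z^{\pm l}$ is bounded uniformly in $l$ by $N := \deg w_r + \deg w_0$. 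Localizing at $\lambda$ then gives $(\sigma-\lambda)^N \cdot M_{(\lambda)} \subset M_{0,(\lambda)}$, so $(M/M_0)_\lambda \simeq M_{(\lambda)}/M_{0,(\lambda)}$ is a finite-length module over the DVR $S_{(\lambda)}$, hence finite-dimensional over $\bbC$.
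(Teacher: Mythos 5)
Your proof is correct and follows essentially the same strategy as the paper's: cyclic presentation $M=A_q/I$, a $w\in I$ of minimal $z$-degree $r$, the submodule $M_0=\bigoplus_{i=0}^{r-1}Sz^ie$, and the key observation that for fixed $\lambda$ only finitely many of the shifts $w_r(q^{-j}\sigma)$, $w_0(q^j\sigma)$ can vanish at $\lambda$ because $q$ is not a root of unity. The difference is only in the bookkeeping: the paper stays inside $M$, building increasing filtrations $M_{[0,n]}$ and $M_{[-m,k-1]}$ by $S$-submodules whose successive subquotients are each annihilated by a single shift of $w_k$ or $w_0$, whereas you pass to $K(S)\otimes_S M$, encode the semilinear action of $z$ by a companion matrix, bound pole orders of the twisted matrix products, and localize at $\lambda$ --- a presentation that makes the uniform bound $N$ explicit and derives finite-dimensionality from Noetherianity of $S_{(\lambda)}$.
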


\begin{proof}  Let $M=A_q/I$ (Corollary \ref{cor on cyclic}). Let $w\in I$ be a nonzero polynomial of the smallest $z$-degree. Then $\rk_SM=\deg _zw$ (Lemma \ref{comp of rank}). Let $\deg_zw=k$. We may assume that
$$w =w_0(\sigma)+w_1(\sigma)z+\cdots+w_l(\sigma )z^k.$$
Let $e\in M$ be the image of $1\in A_q$ in $M$. The elements $e,z e,\ldots,z^{k-1}e$ are $S$-independent and we take $M_0:=\oplus _{i=0}^{k-1}Sz^ie$.

Denote by $M_{[m,n]}\subset M$ the $S$-span of the elements $\{z^ke\}_{k=m,\ldots,n}$, where $m$ may be $-\infty$ and $n$ may be $\infty$ (for example $M_0=M_{[0,k-1]}$). For any $t\in \bbZ$ we have
$$z^tw=w_0(q^{-t}\sigma )z^t+\cdots+w_k(q^{-t}\sigma )z^{t+k}\in I,$$
which means that $w_0(q^{-t}\sigma )M_{[t,t+k]}\subset  M_{[t+1,t+k]}$ and
$w_k(q^{-t}\sigma )M_{[t,t+k]}\subset  M_{[t,t+k-1]}$. This implies that the $S$-module $M$ is the sum of submodules $M_{[0,\infty]}$ and $M_{[-\infty ,k-1]}$ which have filtrations
\begin{equation}\label{first filt} M_0=M_{[0,k-1]}\subset M_{[0,k]}\subset M_{[0,k+1]}\subset\cdots,
\end{equation}
\begin{equation}\label{first filt} M_0=M_{[0,k-1]}\subset M_{[-1,k-1]}\subset M_{[-2,k-1]}\subset\cdots,
\end{equation}
where all subquotients are finite dimensional and the subquotient $M_{[0,n]}/M_{[0,n-1]}$ (resp. 
$M_{[-m-1,k-1]}/M_{[-m,k-1]}$) is annihilated by the polynomial $w_k(q^{k-n}\sigma )$ (resp.  by the polynomial $w_0(q^{m+1}\sigma )$). Therefore any point $\lambda \in \Spec\, S=\bbC^*$ is contained in the support of only finitely many subquotients of the two filtrations. It follows that $(M/M_0)_\lambda$ is finite dimensional for each $\lambda \in \bbC ^*$ and the set of $\lambda $'s for which $(M/M_0)_\lambda\neq 0$ is contained in the $\Gamma$-orbits of the roots of $w_0(\sigma)$ and $w_k(\sigma)$.
\end{proof}

\begin{remark} We believe that the conclusion of Proposition \ref{str of s-mod in m} holds for any choice of a free $S$-submodule $M_0\subset M$ of maximal rank.
\end{remark}

\subsection{The $S$-rank of a tensor product}

We do not know how to compute the $S$-rank of a tensor product $M\otimes N$ in general. However there is the following fact.

\begin{prop} \label{rank of tensor prod} Let $M,N\in \mathcal{M}$ and assume that $\rk_SM=0$, i.e. $M\in \mathcal{M}_{\tor}$. Then
$$\rk_S(N\otimes M)=\rk_S(N)\cdot \rk_A(M).$$
\end{prop}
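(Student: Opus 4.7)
The plan is to reduce to the case where the $S$-module factor is one-dimensional, using the structure theorem for $\mathcal{M}_{\tor}$ together with additivity of $S$-rank on short exact sequences.

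First, by Corollary \ref{cor that tor is a direct sum of fin dim}(1), I would write $M \cong A \otimes_{\bbC} V$ as an $A_q$-module, where $V$ is a finite-dimensional $S$-module and $\dim_{\bbC} V = \rk_A M$. Then
$$N \otimes_A M \;\cong\; N \otimes_A (A \otimes_{\bbC} V) \;\cong\; N \otimes_{\bbC} V,$$
with $A$ acting through the first factor and $\sigma$ acting diagonally as $\sigma(n \otimes v) = \sigma(n) \otimes \sigma(v)$.

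Next, since $V$ is a finite-dimensional $\bbC$-vector space with an invertible operator $\sigma|_V$, I would pick a complete $S$-stable flag $0 = V_0 \subset V_1 \subset \cdots \subset V_d = V$ whose successive quotients $V_i/V_{i-1}$ are one-dimensional $S$-modules $\bbC_{\lambda_i}$ with $\lambda_i \in \bbC^*$ (that is, triangularize $\sigma|_V$). Applying the exact functor $N \otimes_{\bbC} (-)$ yields a filtration of $N \otimes V$ by $A_q$-submodules $N \otimes V_i$ whose subquotients are isomorphic to $N \otimes_{\bbC} \bbC_{\lambda_i}$. Since $S$-rank is additive on short exact sequences (localize at the generic point $K(S)$ of $\Spec S$), it suffices to show $\rk_S(N \otimes_{\bbC} \bbC_{\lambda}) = \rk_S(N)$ for every $\lambda \in \bbC^*$.

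For this last step, observe that $N \otimes_{\bbC} \bbC_\lambda$ coincides with $N$ as a $\bbC$-vector space, with $z$ acting as before and $\sigma$ acting as $\lambda \sigma$. The assignment $\psi_\lambda : A_q \to A_q$ given by $\sigma \mapsto \lambda \sigma$, $z \mapsto z$ preserves the relation $\sigma z = qz\sigma$, so it is an algebra automorphism which restricts to an automorphism of $S$. Thus $N \otimes_{\bbC} \bbC_\lambda \cong \psi_\lambda^* N$ as $S$-modules, and pullback along an automorphism preserves the generic rank. Summing over $i$ gives $\rk_S(N \otimes M) = d \cdot \rk_S(N) = \rk_A(M) \cdot \rk_S(N)$.

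I do not foresee a serious obstacle here; the argument is essentially bookkeeping. The point that requires a little care is maintaining the correct $A_q$-module structure (not only the $S$-module structure) throughout the identifications $N \otimes_A M \cong N \otimes_{\bbC} V$ and $N \otimes_{\bbC} \bbC_\lambda \cong \psi_\lambda^* N$, and verifying that $\psi_\lambda$ is genuinely an algebra automorphism rather than merely a linear isomorphism.
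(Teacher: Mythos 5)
Your proof is correct, and it follows the paper's skeleton: write $M\cong A\otimes_{\bbC}V$, identify $N\otimes_A M\cong N\otimes_{\bbC}V$, filter $V$ by one-dimensional $S$-modules, and reduce the rank computation to a single twist $\bbC_\lambda$. The one place where you diverge is the final reduction. The paper first invokes Proposition \ref{str of s-mod in m} to choose a free $S$-submodule $N_0\subset N$ with $N/N_0$ a direct sum of finite-dimensional $S$-modules, observes that this allows replacing $N$ by $N_0\cong S^{\rk_S N}$, and then checks by hand the isomorphism $S\cong S\otimes_{\bbC}(S/(\sigma-\alpha))$, $1\mapsto 1\otimes\bar 1$. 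You instead work with a general $N$ and observe that $N\otimes_{\bbC}\bbC_\lambda\cong\psi_\lambda^*N$ as $S$-modules, where $\psi_\lambda$ is the automorphism of $A_q$ (restricting to $S$) sending $\sigma\mapsto\lambda\sigma$, $z\mapsto z$; since the generic rank $\dim_{K(S)}(K(S)\otimes_S-)$ is additive on short exact sequences and invariant under pullback along an automorphism of $S$, the result follows. Your route bypasses Proposition \ref{str of s-mod in m} entirely (whose statement is only for $\mathcal{M}_{\tf}$ and whose application to a general $N$ requires a small extra reduction that the paper leaves implicit), which is a modest simplification; the two arguments are otherwise the same idea, and in the special case $N=S$ your automorphism observation becomes exactly the paper's explicit isomorphism $\phi$.
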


\begin{proof} By Corollary \ref{cor that tor is a direct sum of fin dim},  $M=A\otimes _{\bbC}V$ for a finite dimensional $S$-module $V$. (In particular $\rk_A(M)=\dim _{\bbC}V$.)  Then
$$N\otimes _AM= N\otimes _AA\otimes _\bbC V=N\otimes _\bbC V,$$
i.e. the $S$-module $N\otimes _AM$ is the tensor product of the $S$-modules $N$ and $V$ with the $\sigma$-action
$$\sigma (n\otimes v)=\sigma (n)\otimes \sigma (v).$$
Let $N_0\subset N$ be a free $S$-submodule as in Proposition \ref{str of s-mod in m}, so that $N/N_0$ is a direct sum of finite dimensional $S$-modules.
Hence $(N/N_0)\otimes _\bbC V$ is also a direct sum of finite dimensional $S$-modules and therefore $\rk_SN=\rk_SN_0$ and $\rk_S(N\otimes _\bbC V)=\rk_S(N_0\otimes _\bbC V)$. So we may and will assume that $N=S$.

Notice that the $S$-module $V$ has a filtration with $1$-dimensional subquotients. Now it suffices to notice that for any $\alpha \in \bbC ^*$ there is an isomorphism of $S$-modules
$$\phi :S\to S\otimes _{\bbC}(S/(\sigma-\alpha)),\quad \phi(1)=1\otimes \bar{1}.$$
\end{proof}

\subsection{The category $\mathcal{M}_{\tf}$ is not preserved by the duality}\label{rem on m-tf not preserved by duality}

\begin{example} Consider $M\in \mathcal{M}_{\tf}$ such that $M=Ae_1\oplus Ae_2\in \mathcal{M}_{\tf}$ where $\sigma (e_1)=ze_1$ and $\sigma (e_2)=e_1+e_2$. Then we have the exact sequence in $\mathcal{M}$
\begin{equation}\label{counter ex seq}
0\to Ae_1 \to M\to M/Ae_1\to 0,
\end{equation}
where $Ae_1\in \mathcal{M}_{\free}$ and $M/Ae_1\simeq \mathcal{O}\in \mathcal{M}_{\tor}$. Applying the duality functor to the sequence \eqref{counter ex seq} we find that $\mathcal{O}^\vee \simeq \mathcal{O}$ (which is in $\mathcal{M}_{\tor}$) is a submodule of $M^\vee$.
\end{example}

Notice however that the module $M$ in the above example does not belong to $\mathcal{M}_{\free}$.

\section{Duality preserves the $S$-rank and the category $\mathcal{M}_{\free}$}\label{duality and s-mod structure}
In this section we prove the following two theorems.

\begin{thm}\label{thm duality preserves s-rank} Let $M\in \mathcal{M}$.
We have the equality $\rk_{S}M=\rk_{S}M^\vee$.
\end{thm}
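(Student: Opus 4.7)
The plan is to reinterpret $\rk_S$ as a $K(S)$-dimension, reduce to a simple object by induction on length, and then use the anti-automorphism $\epsilon$ to build a defining relation for a generator of $M^\vee$ with the same $z$-degree as one for $M$.

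First I would verify that $\rk_S M = \dim_{K(S)}(K(S)\otimes_S M)$ for every $M\in\mathcal{M}$. For $M\in\mathcal{M}_{\tf}$ this follows from Proposition \ref{str of s-mod in m}: the free $S$-submodule $M_0\subset M$ of rank $\rk_S M$ has an $S$-torsion quotient, so localization at $S$ annihilates $M/M_0$ and yields $\dim_{K(S)}(K(S)\otimes_S M)=\rk_S M_0 = \rk_S M$. For $M\in\mathcal{M}_{\tor}$ both sides vanish by Corollary \ref{cor that tor is a direct sum of fin dim}. The general case follows by applying $K(S)\otimes_S(-)$ to the sequence $0\to M_{\tor}\to M\to M_{\tf}\to 0$ of Corollary \ref{first ex seq for e}, using exactness of localization. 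Consequently $\rk_S$ is additive on short exact sequences in $\mathcal{M}$.

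Second, since every object of $\mathcal{M}$ is a free $A$-module (Lemma \ref{any obj is a-free}), the functor $(-)^\vee=\Hom_A(-,A)$ is exact on $\mathcal{M}$. I would induct on the length of $M$, which is finite by Corollary \ref{cor on Artinian}. When $M$ is not simple, a proper submodule and its quotient yield an exact sequence whose dualization, combined with additivity and the inductive hypothesis, proves the equality for $M$. Thus the problem reduces to $M$ simple; since $M_{\tor}$ is an $A_q$-submodule of $M$, a simple $M$ lies either in $\mathcal{M}_{\tor}$ or in $\mathcal{M}_{\tf}$. The torsion case is immediate: $M^\vee\in\mathcal{M}_{\tor}$ by Lemma \ref{tor is pres by dual} and both $S$-ranks vanish.

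The main case is simple $M\in\mathcal{M}_{\tf}$, in which case $M^\vee$ is also simple and lies in $\mathcal{M}_{\tf}$ (otherwise $M=M^{\vee\vee}$ would lie in $\mathcal{M}_{\tor}$ by Lemma \ref{tor is pres by dual}). Writing $M=A_q/I$ via Corollary \ref{cor on cyclic} and choosing a nonzero $w\in I$ of smallest $z$-degree, Lemma \ref{lemma on realiz of sigma rank} tells us that $w$ is $\sigma$-good with $\deg_z w=\rk_S M$. I would then exploit the anti-automorphism $\epsilon$ of $A_q$ (satisfying $\epsilon(z)=z$, $\epsilon(\sigma)=\sigma^{-1}$), which preserves both $\sigma$- and $z$-degrees, to construct from $w$ an element $w^\ast\in A_q$ built out of $\epsilon(w)$ that annihilates some cyclic generator $g$ of $M^\vee$ and still has $\deg_z w^\ast=\deg_z w$. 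This gives $\rk_S M^\vee\le\deg_z w=\rk_S M$, and the reverse inequality follows by applying the same argument to $M^\vee$ together with $M^{\vee\vee}=M$.

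The main obstacle is the construction of $w^\ast$ and $g$. For line bundles ($\rk_A M=1$) a direct calculation with the formula $(\sigma f)(m)=\sigma(f(\sigma^{-1}m))$ of Section \ref{sect def of tensor str} identifies $M^\vee$ with the principal quotient $A_q/A_q\cdot\epsilon(w)$ up to a torsion line bundle twist absorbable via the isomorphism $L_{q^k,0}\simeq\mathcal{O}$. For higher-rank simple $M$ one uses an $A$-basis $\{e,\sigma e,\ldots,\sigma^{n-1}e\}$ in which $\sigma$ is the companion matrix of $w$, computes the dual $\sigma$-action on $M^\vee$ as the transpose-inverse of this matrix, and verifies by explicit matrix manipulation that $\epsilon(w)$ (or a closely related element adjusted by units of $A_q$) annihilates a suitable cyclic generator $g\in M^\vee$; the delicate part is tracking the constants produced by the commutation relation $\sigma z=qz\sigma$ when passing from $w$ to its $\epsilon$-image.
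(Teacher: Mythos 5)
Your framing---reinterpreting $\rk_S$ as a $K(S)$-dimension, using additivity and exactness of $(-)^\vee$ to reduce to simple objects, then exploiting $\epsilon$---is reasonable and the reductions you carry out are correct, but the heart of the proof is missing and the reduction itself is not quite what you need.

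The main gap is that the ``delicate part'' you wave at in the last paragraph is in fact the entire technical content of the theorem. The paper isolates it as Proposition~\ref{prop on equality of ranks}: if $N=A_q/A_q\cdot p$ with $p$ $\sigma$-good of $z$-degree $d$, then $N^\vee\simeq A_q/A_q\cdot r$ with $r={}^\epsilon p\cdot p_0^{-1}$, so $\deg_z r = \deg_z p$. Proving this requires constructing a specific functional $f\in N^\vee$, deriving a recursion for $\langle f,\sigma^s e\rangle$, and verifying via a nontrivial combinatorial identity (the sums over ordered tuples $X_{t,s}$ with the products $\Pi_x$) that $({}^\epsilon p)\,p_0^{-1}$ annihilates $f$. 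Describing $\sigma$ on $M^\vee$ as ``the transpose-inverse of the companion matrix'' and asserting that $\epsilon(w)$ ``or a closely related element'' annihilates a generator does not constitute a proof; it omits the $q$-shift in the action $(\sigma f)(m)=\sigma(f(\sigma^{-1}m))$ and all the bookkeeping that the paper's $\Pi_x$ machinery carries.

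A second issue: your reduction to simple $M$ does not put you in the situation you want. Even for simple $M\in\mathcal{M}_{\tf}$ the annihilator ideal $I$ of a cyclic generator need not be principal (Proposition~\ref{present of objects in m}, Example~\ref{ex of two gener}); whether some generator of $M$ has a principal annihilator is precisely the open Question~2. So you cannot take an $A$-basis of $M$ in which $\sigma$ is the companion matrix of your chosen $w$: the element of least $z$-degree in $I$ is $\sigma$-good (Lemma~\ref{lemma on realiz of sigma rank}) but typically has $\sigma$-degree strictly larger than $\rk_A M$, so $A_q/A_q\cdot w$ properly surjects onto $M$. The paper sidesteps both the principality issue and the length induction by passing to the auxiliary good module $N=A_q/A_q\cdot p$ (with $p$ the $\sigma$-good element of least $z$-degree), observing $\rk_S N=\rk_S M$ and $M^\vee\hookrightarrow N^\vee$, computing $\rk_S N^\vee$ via Proposition~\ref{prop on equality of ranks}, obtaining $\rk_S M^\vee\le\rk_S M$, and then appealing to $M^{\vee\vee}=M$; once you have that proposition, the induction on length and the additivity of $\rk_S$ you set up are unnecessary.
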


\begin{thm} \label{the duality preserves free} The category $\mathcal{M}_{\free}$ is preserved by the duality.
\end{thm}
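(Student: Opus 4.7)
The plan is to verify that $M^\vee$ is simultaneously $S$-torsion-free and finitely generated as an $S$-module; together these two properties place $M^\vee$ in $\mathcal{M}_{\free}$ by Lemma \ref{fin gen e-mod}.

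For $S$-torsion-freeness, I would apply the adjunctions from Lemma \ref{lemma useful later 1}(1),(3): for any $K \in \mathcal{M}_{\tor}$,
\[\Hom_{A_q}(K, M^\vee) = \Hom_{A_q}(K, \mathcal{H}om(M,\mathcal{O})) = \Hom_{A_q}(K\otimes M, \mathcal{O}).\]
Writing $K = A\otimes_{\bbC} V$ with $V$ a finite-dimensional $S$-module (Corollary \ref{cor that tor is a direct sum of fin dim}) identifies $K\otimes_A M$ with $V\otimes_{\bbC} M$ under the diagonal action $\sigma(v\otimes m)=\sigma v\otimes\sigma m$. A $\sigma$-stable filtration of $V$ with one-dimensional quotients yields an $A_q$-filtration of $V\otimes_{\bbC} M$ whose successive quotients are each $M$ with $\sigma$ rescaled by a nonzero scalar, hence still lie in $\mathcal{M}_{\free}$. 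Since $\mathcal{M}_{\free}$ is stable under extensions in $\mathcal{M}$ (extensions of finitely generated torsion-free $S$-modules remain so), $K\otimes M\in\mathcal{M}_{\free}$. Because $\mathcal{O}\in\mathcal{M}_{\tor}$, Remark \ref{remark on vanishing of hom} forces the displayed Hom to vanish. Applied to $K=(M^\vee)_{\tor}$, an $A_q$-submodule of $M^\vee$ by Lemma \ref{tor is a submodule}, this gives $(M^\vee)_{\tor}=0$, so $M^\vee\in\mathcal{M}_{\tf}$.

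For finite generation over $S$, Theorem \ref{thm duality preserves s-rank} provides $\rk_S M^\vee = \rk_S M<\infty$. A mirror adjunction using rigidity, $\Hom_{A_q}(M^\vee,K)=\Hom_{A_q}(\mathcal{O},M\otimes K)$, combined with $\Hom(\mathcal{M}_{\tor},\mathcal{M}_{\free})=0$, further shows $M^\vee$ admits no nonzero torsion $A_q$-quotient. The main obstacle is that these conditions taken together are still not sufficient in general: Lemma \ref{cor with counter ex} exhibits a simple object in $\mathcal{M}_{\tf}$ with $\rk_S=1$ which is not in $\mathcal{M}_{\free}$. The essential extra input is the hypothesis $M\in\mathcal{M}_{\free}$ itself, equivalently $M^{\vee\vee}\in\mathcal{M}_{\free}$.

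To exploit this, I would write $M^\vee = A_q/I^\vee$ as a cyclic module (Corollary \ref{cor on cyclic}) and, by Lemma \ref{crit for being in m}, reduce to exhibiting a single $z$-good element of $I^\vee$. Such an element is produced by transporting a $z$-good element of the annihilator of a chosen generator of $M$ (available since $M\in\mathcal{M}_{\free}$) through the semilinear-matrix description of the duality: if $\sigma$ acts on an $A$-basis of $M$ by a semilinear matrix $C(z)\in\mathrm{Mat}_r(A)$ with $\det C(z)$ a unit in $A$, then $\sigma$ acts on the dual $A$-basis of $M^\vee$ by the semilinear matrix $(C(z)^{-1})^T$, and reading off a cyclic relation for a well-chosen generator of $M^\vee$ yields an annihilating element whose $z$-goodness is forced by the $z$-goodness of the original element in $\mathrm{Ann}(e)$. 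In the doubly good principal case (Remark \ref{rem on princ ideals}) this computation is direct: a companion-matrix inversion gives the dual presentation explicitly. The general case, where the annihilator has two generators by Proposition \ref{present of objects in m}, requires a parallel treatment. This matrix computation is the technical core and is the natural companion to the proof of Theorem \ref{thm duality preserves s-rank}.
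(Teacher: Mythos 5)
Your proposal correctly identifies the principal case (when $M=A_q/A_q p$ for a doubly good $p$, the explicit formula $r={}^\epsilon p\cdot p_0^{-1}$ of Proposition~\ref{prop on equality of ranks} makes the equivalence ``$p$ is $z$-good $\Leftrightarrow$ $r$ is $z$-good'' immediate, since $\epsilon$ fixes $z$ and inverts $\sigma$), and you correctly recognize that torsion-freeness plus finite $S$-rank is insufficient in light of Lemma~\ref{cor with counter ex}. However, the actual reduction from a general $M\in\mathcal{M}_{\free}$ to the good case is where the proposal has a genuine gap. You propose a ``parallel treatment'' of two-generator ideals via a semilinear companion-matrix computation, but this is left entirely unexecuted, and in the non-principal case the dual annihilator is not a simple companion-matrix inverse; there is no clean transport formula to read a $z$-good element off from.

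The paper avoids this obstacle with a short structural trick you don't have. Since $M\in\mathcal{M}_{\free}$ means (Lemma~\ref{crit for being in m}) the annihilator ideal $I$ contains both a $\sigma$-good element $x$ and a $z$-good element $y$, one sets $p:=(\sigma^n+\sigma^{-n})x+(z^n+z^{-n})y$; for $n\gg0$ this single element is simultaneously $\sigma$-good and $z$-good. Putting $N:=A_q/A_q p$, the good case gives $N^\vee\in\mathcal{M}_{\free}$, and the surjection $N\twoheadrightarrow M$ dualizes to an inclusion $M^\vee\hookrightarrow N^\vee$. Since an $A_q$-submodule of an object in $\mathcal{M}_{\free}$ is again in $\mathcal{M}_{\free}$ ($S$ is a PID, so a submodule of a f.g.\ free $S$-module is f.g.\ free), $M^\vee\in\mathcal{M}_{\free}$. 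Without this reduction-by-surjection idea, your third paragraph remains a sketch rather than a proof.

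As a side remark, your first paragraph (that $(M^\vee)_{\tor}=0$ via the adjunction $\Hom(K,M^\vee)=\Hom(K\otimes M,\mathcal{O})$ and the computation that $K\otimes M\in\mathcal{M}_{\free}$ for $K\in\mathcal{M}_{\tor}$) is correct but redundant: by Lemma~\ref{fin gen e-mod}, finite generation over $S$ alone already forces membership in $\mathcal{M}_{\free}$, with no separate torsion-freeness check needed.
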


The following key proposition is the main step in the proof of both theorems. Recall the anti-automorphism $\epsilon :A_q\to A_q$ (Section \ref{intro of algebra}).

\begin{prop} \label{prop on equality of ranks} Let $t\geq 0$ and let $M=A_q/A_q\cdot p(z,\sigma )$, where $p(z,\sigma )$ is a $\sigma $-good polynomial
$$p(z,\sigma )=p_0(z)+p_1(z)\sigma +\cdots+p_{t-1}(z)\sigma ^{t-1}+\sigma ^t$$
($p_0(z)\in A_q$ is a unit). Put
\begin{equation}\label{proposed formula} r(z,\sigma ):= {}^\epsilon p(z,\sigma )\cdot p_0(z)^{-1}
\end{equation}
(so that $r(z,\sigma )$ is also a good polynomial). Then there exists an isomorphism of $A_q$-modules $A_q/A_q\cdot r(z,\sigma)\simeq M^\vee$.
\end{prop}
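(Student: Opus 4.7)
The plan is to exhibit $M^\vee$ as a cyclic $A_q$-module whose annihilator is exactly $A_q\cdot r$. Since $p$ is $\sigma$-good of $\sigma$-degree $t$, Lemma \ref{comp of rank} gives that $M=A_q/A_q p$ is a free $A$-module of rank $t$ with basis $\{e_0,\ldots,e_{t-1}\}$, where $e$ is the image of $1$ and $e_i:=\sigma^i e$. Consequently $M^\vee=\Hom_A(M,\mathcal{O})$ is free of rank $t$ with dual $A$-basis $\{e^0,\ldots,e^{t-1}\}$ defined by $e^i(e_j)=\delta_{ij}$. The candidate generator will be $e^0$, whose annihilator I will read off from the $\sigma$-action on $M^\vee$.

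First I compute the action of $\sigma$ on the dual basis. Left-multiplying $p\cdot e=0$ by $\sigma^{-1}$ and solving yields
$$\sigma^{-1}e_0=-p_0(q^{-1}z)^{-1}\Bigl[\sum_{i=1}^{t-1}p_i(q^{-1}z)e_{i-1}+e_{t-1}\Bigr].$$
Combining this with the definition $(\sigma f)(m)=\sigma(f(\sigma^{-1}m))$, a direct calculation (with the conventions $p_t:=1$ and $e^t:=0$) yields
$$\sigma e^j=e^{j+1}-p_0(z)^{-1}p_{j+1}(z)\,e^0,\qquad j=0,\ldots,t-1.$$
Setting $P_0:=1\in A_q$ and $P_{j+1}:=\sigma P_j+p_{j+1}(z)p_0(z)^{-1}$ recursively, the above rearranges to $e^{j+1}=P_{j+1}\cdot e^0$, so by induction $e^j=P_j\cdot e^0$ for all $j\le t-1$ (hence $e^0$ generates $M^\vee$), and the boundary case with $e^t=0$ gives the key relation $P_t\cdot e^0=0$.

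The main step—and what I expect to be the principal obstacle—is the algebraic identity $P_t=\sigma^t\cdot r$. I plan to prove by induction on $j$ that
$$P_j=\sigma^j\cdot {}^\epsilon\tilde p_j\cdot p_0(z)^{-1},\qquad \tilde p_j:=p_0+p_1\sigma+\cdots+p_j\sigma^j.$$
Since $\epsilon$ is an anti-automorphism with $\epsilon(z)=z$ and $\epsilon(\sigma)=\sigma^{-1}$, one has ${}^\epsilon\tilde p_{j+1}={}^\epsilon\tilde p_j+\sigma^{-(j+1)}p_{j+1}$; multiplying this by $\sigma^{j+1}$ on the left and by $p_0^{-1}$ on the right recovers exactly the recursion defining $P_{j+1}$. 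Taking $j=t$ gives $P_t=\sigma^t\cdot {}^\epsilon p\cdot p_0^{-1}=\sigma^t r$. Since $\sigma^t$ acts invertibly on $M^\vee$, the relation $P_t\cdot e^0=0$ is equivalent to $r\cdot e^0=0$, so the surjection $a\mapsto a\cdot e^0$ factors through a surjective $A_q$-homomorphism $\bar\Phi\colon A_q/A_q r\twoheadrightarrow M^\vee$. Since $r={}^\epsilon p\cdot p_0^{-1}$ is a product of two $\sigma$-good elements, Remark \ref{rem on prod of good elements} together with Remark \ref{rem on princ ideals} and Lemma \ref{comp of rank} imply $A_q/A_q r$ is a free $A$-module of rank $t$; as $A=\bbC[z,z^{-1}]$ is a PID, the surjection $\bar\Phi$ between free $A$-modules of equal finite rank is an isomorphism, completing the proof.
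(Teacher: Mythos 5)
Your proof is correct, and it takes a genuinely different route from the paper's. Both proofs pick the same generator of $M^\vee$ (your $e^0$ is the paper's $f$, determined by $\langle f,\sigma^k e\rangle=\delta_{k,0}$ for $0\le k\le t-1$) and both close with the same rank count. But the paper derives the relation $r\cdot f=0$ by a lengthy combinatorial computation: it introduces the index sets $X_{t,s}$ of tuples in $[1,t]$ summing to $s$, the associated products $\Pi_x$, proves two ``partition'' identities, and uses them inductively to get a closed formula for $\langle f,\sigma^s e\rangle$ before the key vanishing claim drops out. Your argument replaces all of that by the recursion $P_{j+1}=\sigma P_j+p_{j+1}p_0^{-1}$ encoding $e^{j+1}=P_{j+1}e^0$, together with the one-line induction $P_j=\sigma^j\cdot{}^\epsilon\tilde p_j\cdot p_0^{-1}$; this makes $P_t=\sigma^t r$ transparent, so both the annihilation $re^0=0$ and the generation of $M^\vee$ by $e^0$ fall out simultaneously. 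Your route is shorter and, to my eye, more conceptual: it explains \emph{why} $\epsilon$ shows up (reading the dual recursion backwards reverses the order of multiplication), whereas the paper's partition identities verify the formula without revealing its origin. The paper's unitriangular-matrix argument for surjectivity corresponds in your setup to noting that $\sigma^j e^0=e^j+(\text{lower terms in }e^0,\ldots,e^{j-1})$, so the two are also equivalent there. In short: same generator, same endgame, but a cleaner and more illuminating derivation of the central algebraic identity.
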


\begin{remark} Since $p_0(z)\in A_q$ is a unit, we also have an isomorphism of modules $$A_q/A_q\cdot {}^\epsilon p(z,\sigma )\simeq M^\vee.$$
However in the proof of Proposition \ref{prop on equality of ranks} it is more convenient to work with the polynomial $r(z,\sigma)$.
\end{remark}

\begin{proof}
Let $e\in M$ be the image of $1\in A_q$. Then the elements
$$e,\ \sigma e,\ldots,\sigma ^{t-1}e$$
form an $A$-basis of $M$.

Recall that we have the canonical perfect  pairing
$$\langle -,-\rangle :M^\vee \otimes _AM\to \mathcal{O}$$
which is also a morphism of $A_q$-modules (Section \ref{sect def of tensor str}). For $m\in M$ and $m^\vee \in M^\vee$ we
have $\langle \sigma m^\vee ,\sigma m\rangle =\sigma \langle m^\vee ,m\rangle$, which implies that
\begin{equation}\label{formula for pairing}
\langle \sigma ^{-k}m^\vee ,m\rangle =\sigma ^{-k}\langle m^\vee,\sigma ^km\rangle.
\end{equation}

Let $f\in M^\vee$ be an element such that
\begin{equation} \label{defn of f}\langle f,e\rangle =1\quad \text{and}
\quad \langle f,\sigma ^ke\rangle =0,\quad \text{for} \quad k=1,\ldots,t-1.
\end{equation}
This defines a morphism of $A_q$-modules $\phi :A_q\to M^\vee$ where $\phi(1)=f$. We claim that $\phi$ is surjective and the kernel of $\phi$ is the left ideal $A_q\cdot r(z,\sigma)$.

Since $\Bigl( \sum_{i=0}^t p_{t-i} \sigma^{t-i}\Bigr)e=0$,
for all $s\in\bbZ$ we have
\begin{equation}
\label{left recurrence}
0 = \langle f,\sigma^{s-t}\Bigl( \sum_{i=0}^t p_{t-i} \sigma^{t-i}\Bigr)e\rangle
=\sum_{i=0}^t p_{t-i}^{\sigma^{s-t}} \langle f,\sigma^{s-i}e\rangle.
\end{equation}

Let $X_{t,s}$ denote the set of ordered tuples of integers in $[1,t]$ whose coordinates sum to $s$ (which is the empty set if $s< 0$ and the one element set consisting of the empty tuple if $s=0$).
For $x = (x_1,\ldots,x_k)\in X_{t,s}$, let
$$\Pi_{x} := (-1)^k\prod_{j=1}^k p_{t-x_j}^{\sigma^{{x_1+\cdots+x_j}}}.$$
We claim that for $s>0$, we have
\begin{equation}
\label{right partition}
\sum_{i=0}^t p_{t-i}^{\sigma^s}\sum_{x\in X_{t,s-i}} \Pi_{x}=0.
\end{equation}
To see this, we express $X_{t,s}$ as the disjoint union over $1\le i\le t$ of tuples $(x_1,\ldots,x_k)$ with $x_k=i$.
Since
$$\Pi_{(x_1,\ldots,x_k)} = -p_{t-x_k}^{\sigma^{x_1+\cdots+x_{k-1}+x_k}}\Pi_{(x_1,\ldots,x_{k-1})},$$
we have
$$\sum_{x\in X_{t,s}} \Pi_{x} = \sum_{i=1}^{t} \sum_{(x_1,\ldots,x_{k-1})\in X_{t,s-i}}-p_{t-i}^{\sigma^s}\Pi_{(x_1,\ldots,x_{k-1})},$$
which implies the claim.

Likewise, for $s>0$, we have
\begin{equation}
\label{left partition}
\sum_{i=0}^t p_{t-i}^{\sigma^i}\sum_{x\in X_{t,s-i}} \Pi_{x}^{\sigma^i}=0.
\end{equation}
Indeed, expressing $X_{t,s}$ as the disjoint union over $1\le i\le t$ of tuples $(x_1,\ldots,x_k)$ with $x_1=i$,
since
$$\Pi_{(x_1,\ldots,x_k)} = -p_{t-x_1}^{\sigma^{x_1}}\Pi_{(x_2,\ldots,x_{k})}^{\sigma^{x_1}},$$
we have
$$\sum_{x\in X_{t,s}} \Pi_{x} = \sum_{i=1}^{t} \sum_{(x_2,\ldots,x_{k})\in X_{t,s-i}}-p_{t-i}^{\sigma^i}\Pi_{(x_2,\ldots,x_{k})}^{\sigma^i},$$
which implies \eqref{left partition}.

For $s\ge t$,
\begin{equation}
\label{left formula}
\langle f,\sigma^se\rangle = -p_0\sum_{x\in X_{t,s-t}} \Pi_{x}.
\end{equation}
For $s=t$, this is immediate from  \eqref{left recurrence} and the definition of $f$.
For $s>t$, it follows by induction from \eqref{left recurrence} and \eqref{right partition}.

\medskip

\noindent{\it Claim.} {\it For $s>t$, we have}
$$\Bigl\langle\Bigl(\sum_{i=0}^t \sigma^{-i}(p_{t-i}/p_0)\Bigr) f, \sigma^s e\Bigr\rangle = 0.$$

\begin{proof}
By \eqref{left partition},
$$\sum_{j=0}^t p_{t-j}^{\sigma^j}\sum_{x\in X_{t,s-j}} \Pi_{x}^{\sigma^j}=0,$$
and applying $\sigma^{-t}$ to both sides, we obtain
$$\sum_{j=0}^t p_{t-j}^{\sigma^{j-t}}\sum_{x\in X_{t,s-j}} \Pi_{x}^{\sigma^{j-t}}=0,$$
Substituting $j=t-i$, this implies
$$\sum_{i=0}^t p_{t-i}^{\sigma^{-i}}\Bigl(\sum_{x\in X_{t,s+i-t}}\Pi_{x}^{\sigma^{-i}}\Bigr) = 0.$$
By \eqref{left formula}, this implies
$$\sum_{i=0}^t (p_{t-i}/p_0)^{\sigma^{-i}}\sigma^{-i}(\langle f,\sigma^{s+i}e\rangle)=0,$$
which means
$$\Bigl\langle \Bigl(\sum_{i=0}^t (p_{t-i}/p_0)^{\sigma^{-i}}\sigma^{-i}\Bigr) f, \sigma^s e\Bigr\rangle = 0.$$
\end{proof}
Since
$$(p_{t-i}/p_0)^{\sigma^{-i}} \sigma^{-i} =\sigma^{-i} p_{t-i} p_0^{-1} = {}^\epsilon(p_{t-i}\sigma^i)p_0^{-1},$$
we have for any $s>t$
\begin{equation}
\langle ({}^\epsilon p)p_0^{-1} f, \sigma^s e\rangle = 0.
\end{equation}

As any $t$ consecutive terms of $\sigma^i e$ span $M$,
the proposition implies that left multiplication by $r = {}^\epsilon p \cdot p_0^{-1}$
annihilates $f$.  Thus, there is a map of left $A_q$-modules $A_q/r A_q\to M^\vee$ sending the residue class of $1$ to $f$.  This map is surjective since the matrix
$$\langle \sigma^{i-1}f, \sigma^{j-1}e\rangle_{1\le i,j\le t}$$
is unitriangular.  As $A_q/r A_q$ has $A$-rank $n$, it is isomorphic to $M^\vee$.
\end{proof}

\subsubsection{Proof of Theorem \ref{thm duality preserves s-rank}}

Let $M\in \mathcal{M}$ be a nonzero object, $M\simeq A_q/I$ for a left ideal $I\subset A_q$. We show that $\rk_SM^\vee \leq \rk_SM$.

Let $p\in I$ be a $\sigma$-good element such that $\rk_SM=\deg _zp$ (Lemma \ref{comp of rank} and Lemma \ref{lemma on realiz of sigma rank}). Put $N:=A_q/A_q\cdot p\in \mathcal{M}$ with the canonical surjection $N\twoheadrightarrow M$. We have
\begin{equation} \label{red of thm}
\rk_SM=\deg _zp=\rk _SN.
\end{equation}
Consider the dual inclusion $M^\vee \hookrightarrow N^\vee$. By Proposition
\ref{prop on equality of ranks} $N^\vee \simeq A_q/A_q\cdot r$ where  $$r(z,\sigma )=p_0(q^{-t}z)\cdot {}^\epsilon p\cdot  p_0(z)^{-1}.$$
Then
$$\rk_SN^\vee= \deg _zr =\deg _zp=\rk_SN.$$
Hence
$$\rk_SM^\vee \leq \rk_SN^\vee =\rk_SN=\rk_SM.$$
Applying the same argument to $M^\vee$ instead of $M$ we find that $\rk_SM=\rk_SM^\vee$, which proves Theorem \ref{thm duality preserves s-rank}.

\subsubsection{Proof of Theorem \ref{the duality preserves free}}

Let $M$ be a good module as in Proposition \ref{prop on equality of ranks}. In the notation of that proposition $M=A_q/A_q\cdot p$ and $M^\vee =A_q/A_q\cdot r$. Then
$$M\in \mathcal{M}_{\free} \ \Leftrightarrow \
                    \text{$p$ is $z$-good} \ \Leftrightarrow \
                   \text{$r$ is $z$-good} \ \Leftrightarrow \
                    M^\vee \in \mathcal{M}_{\free}.
$$
This proves Theorem \ref{the duality preserves free} for good modules.

Let $M\in \mathcal{M}_{\free}$ be any module. By Lemma \ref{crit for being in m} the ideal $I$ contains elements $x$ and $y$ which are $\sigma$-good and $z$-good respectively. For $n>0$ put
$$p:=(\sigma ^n+\sigma ^{-n})x+(z^n+z^{-n})y.$$
For $n\gg 0$ the polynomial $p\in I$ is both $\sigma$-good and $z$-good.

Let $N:=A_q/A_q\cdot p$. Then $N\in \mathcal{M}_{\free}$ and $M$ is a quotient of $N$, so $M^\vee \subset N^\vee$. Since $N$ is good, by the above argument we have $N^\vee \in \mathcal{M}_{\free}$. Hence also $M^\vee \in \mathcal{M}_{\free}$. This completes the proof of Theorem \ref{the duality preserves free}.

\section{The Picard group of the category $\mathcal{M}$}\label{sect on pic gr}

Line bundles over $A_q$ are by definition objects $L\in \mathcal{M}$
of $\rk_AL=1$. The collection of isomorphism classes of line bundles is an abelian group $\Pic(\mathcal{M})$ under the tensor product. Let us describe this group.

Let $L\in \Pic(\mathcal{M})$ and choose an isomorphism of $A$-modules $\phi :A\stackrel{\cong}{\to} L$. Then for the $\bbC$-linear automorphism $\sigma :L\to L$ we have
\begin{equation}
\label{line bundle}
\sigma (\phi (1))=cz^m\cdot \phi (1)
\end{equation}
for some $c\in \bbC ^*$ and $m\in \bbZ$. So $L\simeq A_q/A_q\cdot (\sigma -cz^m)$ and $\rk_SL=\vert m\vert$ (Lemma \ref{comp of rank}). If we change the isomorphism $\phi$, say consider $\psi :=dz^i\cdot \phi$, $d\in \bbC ^*$, $i\in \bbZ$, then
$$\sigma (\psi (1))=q^icz^m\cdot \psi (1).$$
This defines a bijection $\Pic (\mathcal{M})\to (\bbC ^*/q^{\bbZ})\times \bbZ$, which is clearly an isomorphism of groups. We call the integer $m$ in \eqref{line bundle} the {\bf degree} of $L$ and denote it $\deg (L)$. For $L\in \Pic(\mathcal{M})$ the dual module $L^\vee$ is the inverse line bundle $L^{-1}\in \Pic(\mathcal{M})$; in particular, $\deg(L^{-1})=-\deg(L)$. 

\section{Cohomology of objects in $\mathcal{M}$}

Given objects $M,N\in \mathcal{M}$ we denote by $\Ext ^i(M,N)$ the group computed in the ambient category $A_q\text{-Mod}$. Then by Lemma \ref{lemma list of basic prop},
$\Ext ^i(M,N)=0$ if $i\neq 0,1$. We do not know if these groups coincide with the ones computed in the category $\mathcal{M}$.

\begin{defn} For $M\in \mathcal{M}$ we define its cohomology groups $H^i(M)$ as $\Ext ^i(\mathcal{O},M)$.
\end{defn}

\begin{lemma} \label{lemma on coh} Given $M\in \mathcal{M}$ consider the complex (of $\bbC$-vector spaces)
$$K(M):0\to M\stackrel{(\sigma -1)\cdot}{{\xrightarrow{\hspace{22pt}}} }M\to 0.$$
Then $H^\bullet  (K(M))=H^\bullet (M)$, i.e.
$$H^0(M)=\ker (\sigma -1),\quad H^1(M)=\coker(\sigma -1).$$
\end{lemma}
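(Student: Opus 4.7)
The plan is to compute the Ext groups directly from a length-one projective resolution of $\mathcal{O}$. Since $\mathcal{O}=A_q/A_q\cdot(\sigma-1)$ by Definition \ref{defn of o}, the natural candidate resolution is
$$0 \to A_q \xrightarrow{\;\cdot(\sigma-1)\;} A_q \to \mathcal{O} \to 0,$$
where the first map is right multiplication by $\sigma-1$ (which is a homomorphism of left $A_q$-modules). I need to verify three things: surjectivity of the projection (immediate from the definition of $\mathcal{O}$), that the image of right multiplication by $\sigma-1$ is exactly the kernel $A_q\cdot(\sigma-1)$ (also immediate), and injectivity of right multiplication by $\sigma-1$. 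The last point follows because $A_q$ is a domain (Lemma \ref{lemma list of basic prop}(1)) and $\sigma-1\ne 0$. Since $A_q$ is a free left $A_q$-module, this is a projective (in fact free) resolution of $\mathcal{O}$ of length $1$, consistent with the global dimension of $A_q$ being $1$.

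Next, I apply $\Hom_{A_q}(-,M)$ to the truncated resolution $A_q \xrightarrow{\cdot(\sigma-1)} A_q$ to obtain a two-term complex computing $\Ext^\bullet(\mathcal{O},M)$. Under the canonical identification $\Hom_{A_q}(A_q,M) \xrightarrow{\sim} M$ sending $f\mapsto f(1)$, I need to identify the map induced by precomposition with $\cdot(\sigma-1)$. For $f\in \Hom_{A_q}(A_q,M)$, the composite $f\circ(\cdot(\sigma-1))$ evaluated at $1$ gives $f(\sigma-1)=\sigma f(1)-f(1)=(\sigma-1)f(1)$. Hence the induced map on $M$ is precisely left multiplication by $\sigma-1$, so the two-term complex $\Hom_{A_q}(A_q,M)\to \Hom_{A_q}(A_q,M)$ is isomorphic to $K(M)$.

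The conclusion is then automatic: $H^0(M)=\Ext^0(\mathcal{O},M)$ equals $\ker((\sigma-1)\cdot)$ acting on $M$, and $H^1(M)=\Ext^1(\mathcal{O},M)$ equals $\coker((\sigma-1)\cdot)$ acting on $M$. There is no real obstacle; the only points requiring any care are (i) the verification that right multiplication by $\sigma-1$ on $A_q$ is injective, which uses that $A_q$ is a domain, and (ii) keeping track of the fact that precomposing a left-module map with right multiplication by an element produces the corresponding left multiplication on the target, which is the source of the operator $(\sigma-1)\cdot$ appearing in $K(M)$.
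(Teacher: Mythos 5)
Your proof is correct and takes essentially the same approach as the paper: both use the free resolution $0\to A_q\xrightarrow{\cdot(\sigma-1)}A_q\to\mathcal{O}\to 0$ and then apply $\Hom_{A_q}(-,M)$ to identify the resulting two-term complex with $K(M)$. You spell out a couple of verifications the paper leaves implicit (injectivity via $A_q$ being a domain, and the precomposition-to-left-multiplication calculation), but there is no substantive difference.
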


\begin{proof} We have the projective resolution
\begin{equation}\label{res of o} 0\to A_q\stackrel{\cdot (\sigma -1)}{{\xrightarrow{\hspace{22pt}}} } A_q\to \mathcal{O}\to 0.
\end{equation}
Then for any $M$, the cohomology $H^i(M)$ is the cohomology of the complex
$$0\to \Hom _{A_q}(A_q,M)\stackrel{(\sigma -1)^*}{{\xrightarrow{\hspace{24pt}}} }\Hom _{A_q}(A_q,M)\to 0,$$
which is exactly the complex $K(M)$.
\end{proof}

\begin{remark} \label{rem that coh dep only on sigma} Lemma \ref{lemma on coh} shows the cohomology $H^\bullet(M)$ for $M\in \mathcal{M}$ depends only on the $S$-module structure of $M$.
\end{remark}

\subsection{The Riemann-Roch and Serre duality theorem for line bundles in $\mathcal{M}$}
\begin{theorem} \label{rr for line bundles} (1) $H^0(\mathcal{O})\simeq H^1(\mathcal{O})\simeq \bbC$.
(So the genus of the quantum elliptic curve is indeed $1$).

(2) Let $L\in \Pic(\mathcal{M})$, $\deg(L)=0$, $L\ncong \mathcal{O}$. Then $H^0(L)=H^1(L)=0$.

(3) (Riemann-Roch) Let $L\in \Pic(\mathcal{M})$, $\deg(L)\neq 0$. Then $H^0(L)=0$ and
$h^1(L)=\vert \deg(L)\vert$.

In particular, for any $L$ we have $h^0(L)\neq 0$ if and only if $L\simeq \mathcal{O}$ and
$$\chi (L)=h^0(L)-h^1(L)=-\vert \deg(L)\vert \leq 0$$

(4) (Serre duality) For any $L\in \Pic(\mathcal{M})$ we have $h^i(L)=h^i(L^{-1})$, $i=0,1$.
\end{theorem}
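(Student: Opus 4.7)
The plan is to reduce all four assertions to computations of the kernel and cokernel of $\sigma - 1$ acting on $L$ as a $\bbC$-vector space, which is justified by Lemma \ref{lemma on coh}. By the description of $\Pic(\mathcal{M})$ in Section \ref{sect on pic gr}, any line bundle can be presented as $L = A\cdot e$ with $\sigma(e) = cz^m e$ for some $c\in \bbC^*$ and $m = \deg(L)\in \bbZ$, and then $\sigma$ acts on the $A$-basis $\{z^k e\}_{k\in\bbZ}$ by $\sigma(z^k e) = cq^k z^{k+m} e$. So the entire theorem is reduced to linear algebra on $A = \bigoplus_{k\in\bbZ} \bbC z^k$.

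First I would handle the degree zero case $m = 0$, covering (1) and (2) simultaneously. Here $\sigma - 1$ is diagonal with eigenvalue $cq^k - 1$ on $z^k e$. Since $q$ is not a root of unity, $cq^k = 1$ has at most one solution $k$, and has exactly one precisely when $c\in q^{\bbZ}$. By Section \ref{sect on pic gr} the latter is the condition $L\simeq \mathcal{O}$, so in that case both $H^0$ and $H^1$ are one-dimensional, while for $L\not\simeq \mathcal{O}$ both vanish.

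For (3), assume $m > 0$ (the case $m < 0$ is entirely parallel, or may alternatively be deduced once (4) is available for $m > 0$). Writing $f = \sum_k a_k z^k$ and $g = (\sigma - 1)f = \sum_k g_k z^k$, the relation unpacks to the recurrence $g_k = cq^{k-m}\, a_{k-m} - a_k$, which decouples over the $m$ residue classes of $k$ modulo $m$. On a single residue class, setting $b_j = a_{r + jm}$ and $h_j = g_{r + jm}$, the equation becomes $h_j = \alpha_j b_{j-1} - b_j$ with $\alpha_j = cq^{r + (j-1)m} \ne 0$, a recurrence on finitely supported sequences on $\bbZ$. I would absorb the $\alpha_j$ into a geometric rescaling $\tilde b_j = b_j/\beta_j$ with $\beta_j/\beta_{j-1} = \alpha_j$, reducing to the standard finite-difference operator $\tilde h_j = \tilde b_{j-1} - \tilde b_j$. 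The latter has zero kernel (its kernel is constants, which are not of finite support unless zero) and one-dimensional cokernel, cut out by the telescoping condition $\sum_j \tilde h_j = 0$. Summing over the $m$ residue classes gives $h^0(L) = 0$ and $h^1(L) = m = |\deg L|$, establishing (3).

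Finally, (4) is an immediate consequence of (1)–(3): in every case the resulting formulas depend only on $|\deg L|$ and on whether $L\simeq \mathcal{O}$, and both quantities are symmetric under $L \leftrightarrow L^{-1}$. The main technical obstacle is the bookkeeping in the rescaling step: the multipliers $\beta_j$ must be defined consistently for all $j\in \bbZ$ (which is possible precisely because every $\alpha_j$ is nonzero, i.e.\ $c \ne 0$), and one must check that finite-support solutions $\tilde b_j$ correspond under rescaling to genuine Laurent polynomial solutions $a_k$, rather than merely formal series.
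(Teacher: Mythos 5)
Your argument is correct and follows essentially the same route as the paper: reduce via Lemma \ref{lemma on coh} to the kernel and cokernel of $\sigma-1$ acting on $A\cdot e$ with $\sigma(z^k e)=cq^kz^{k+m}e$, then do the linear algebra directly (the paper merely asserts that the images of $1,z,\dots,z^{d-1}$ give a basis of $H^1$, while you supply the telescoping/rescaling justification). One small caution: the parenthetical suggestion that the case $m<0$ ``may alternatively be deduced once (4) is available'' is circular, since you obtain (4) as a corollary of (1)--(3); stick with the ``entirely parallel'' argument.
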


\begin{proof} (1) This is obvious from the complex $K(\mathcal{O})$.

(2) If $H^0(L)\neq 0$, then there exists a nonzero morphism $\mathcal{O}\to L$, which is necessarily an isomorphism. If $\deg(L)=0$, $L\ncong \mathcal{O}$, then one sees immediately from the complex $K(L)$, that the map $\sigma -1:L\to L$ is surjective, i.e. $H^1(L)=0$.

(3) Suppose $\deg(L)=d\neq 0$, (say $d>0$) and choose an isomorphism of $A$-modules $L\simeq A$. Under this isomorphism $\sigma (1)=cz^d$, $c\in \bbC ^*$. Then the images of  $1,z,\ldots, z^{d-1}\in L$ in $H^1(L)$ form a $\bbC$-basis.

(4) This follows from (1), (2), and (3).
\end{proof}

\subsection{Cohomology of a general object in $\mathcal{M}$}

\begin{lemma}\label{bound on h0} For any $M\in \mathcal{M}$ we have $h^0(M)\leq \rk_A(M)$.
\end{lemma}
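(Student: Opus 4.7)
Write $r := \rk_A M$, so by Lemma \ref{any obj is a-free} the module $M$ is a free $A$-module of rank $r$. By Lemma \ref{lemma on coh}, $H^0(M) = \ker(\sigma - 1 : M \to M)$, and what we must show is that any $r+1$ elements $m_1, \ldots, m_{r+1}$ fixed by $\sigma$ are linearly dependent over $\bbC$. The plan is to argue by contradiction, using the discrete action of the $q^{\bbZ}$-orbit on the exponents to peel off a single monomial from an $A$-linear relation.

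Since $M$ is $A$-torsion free, it embeds into $M \otimes_A K(A)$, which is a $K(A)$-vector space of dimension $r$. Therefore any $r+1$ elements of $M$ are $K(A)$-linearly dependent, and after clearing denominators I get a nonzero relation
\begin{equation*}
\sum_{i=1}^{r+1} f_i(z)\, m_i = 0 \quad \text{in } M, \qquad f_i(z) \in A,\ \text{not all zero}.
\end{equation*}
Because $\sigma m_i = m_i$ and $\sigma \cdot f(z) = f(qz) \cdot \sigma$, applying $\sigma^k$ to this relation for each $k \in \bbZ$ yields
\begin{equation*}
\sum_{i=1}^{r+1} f_i(q^k z)\, m_i = 0, \qquad k \in \bbZ.
\end{equation*}

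Now I write $f_i(z) = \sum_{j \in S} a_{ij} z^j$ where $S \subset \bbZ$ is the (finite) union of the supports of the $f_i$, and $a_{ij} \in \bbC$. Taking a finite $\bbC$-linear combination of the above relations with coefficients $\lambda_k$, the coefficient of $m_i$ becomes $\sum_{j \in S} a_{ij}\bigl(\sum_k \lambda_k q^{jk}\bigr) z^j$. Since $q$ is not a root of unity, the values $\{q^j\}_{j \in S}$ are pairwise distinct, so by a Vandermonde/Lagrange-interpolation argument I can choose the $\lambda_k$ so that $\sum_k \lambda_k q^{jk} = \delta_{j, j_0}$ for any preselected $j_0 \in S$. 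The relation then collapses to
\begin{equation*}
z^{j_0}\sum_{i=1}^{r+1} a_{ij_0}\, m_i = 0,
\end{equation*}
and since $M$ is $A$-torsion free this forces $\sum_i a_{ij_0} m_i = 0$ in $M$. Picking $j_0 \in S$ so that some $a_{ij_0} \neq 0$ (which is possible by the definition of $S$), this is a nontrivial $\bbC$-linear relation among $m_1, \ldots, m_{r+1}$, contradicting their assumed $\bbC$-linear independence.

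The main technical point is the Vandermonde step, which is where the hypothesis that $q$ is not a root of unity enters crucially; without it the powers $q^j$ for $j \in S$ could coincide and the isolation of a single monomial would fail. Everything else is formal manipulation plus the torsion-freeness already established in Lemma \ref{any obj is a-free}.
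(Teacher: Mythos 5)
Your proof is correct and rests on exactly the same key technical point as the paper's: applying $\sigma^k$ to an $A$-linear relation among $\sigma$-invariant elements and then using that $q$ is not a root of unity (Vandermonde) to isolate individual monomials and produce a $\bbC$-linear relation. The paper packages this slightly more directly, as showing that the natural map $\mathcal{O}\otimes_{\bbC}H^0(M)\to M$ is injective, whereas you argue by contradiction from a $K(A)$-linear dependence, but the two arguments are essentially identical.
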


\begin{proof} We claim the natural map of $A_q$-modules
$$\mathcal{O}\otimes _{\bbC}H^0(M)\to M$$
is injective. Indeed, if $m_j\in H^0(M)\subset M$ are linearly independent over $\bbC$ and $\sum c_{i,j} z^i m_j$ is zero, then left-multiplying by $\sigma^k$,
we obtain $\sum c_{i,j} q^{ik}z^i m_j = 0$ for all $k$. As $q$ is not a root of unity, this implies all $c_{i,j}$ are $0$.
\end{proof}

\begin{prop}\label{prop on coh of tf} Let $M\in \mathcal{M}_{\tf}$. Then $h^0(M)=0$ and $h^1(M)=\rk_SM$.
\end{prop}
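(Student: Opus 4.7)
The plan is to compute $H^\bullet(M)$ using the complex $K(M)$ of Lemma \ref{lemma on coh}, which by Remark \ref{rem that coh dep only on sigma} depends only on the $S$-module structure of $M$. So I will freely work with $S$-submodules of $M$ that are not $A_q$-submodules.

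First, the vanishing $H^0(M)=0$ is immediate: an element $m\in M$ with $\sigma m = m$ satisfies $(\sigma-1)m = 0$, but $\sigma-1 \in S$ is nonzero, so since $M \in \mathcal{M}_{\tf}$ is $S$-torsion free, $m=0$.

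For the computation of $H^1(M)$, I will apply Proposition \ref{str of s-mod in m} to choose a free $S$-submodule $M_0 \subset M$ of full rank $k = \rk_S M$, so that the quotient $M/M_0$ is an $S$-torsion module with the decomposition $M/M_0 = \bigoplus_{\lambda \in \bbC^*}(M/M_0)_\lambda$ into finite-dimensional generalized $\sigma$-eigenspaces. The short exact sequence of $S$-modules $0 \to M_0 \to M \to M/M_0 \to 0$ gives, by the snake lemma applied to multiplication by $\sigma-1$ on each term, the six-term exact sequence
\begin{equation*}
0 \to \ker(\sigma-1|M_0) \to H^0(M) \to \ker(\sigma-1|M/M_0) \to \coker(\sigma-1|M_0) \to H^1(M) \to \coker(\sigma-1|M/M_0) \to 0.
\end{equation*}

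Now I compute each outer term. Since $M_0 \cong S^k$ as an $S$-module and $S$ is a domain with $S/(\sigma-1) \cong \bbC$, we have $\ker(\sigma-1|M_0) = 0$ and $\coker(\sigma-1|M_0) \cong \bbC^k$. On $M/M_0$, the operator $\sigma-1$ restricted to $(M/M_0)_\lambda$ is invertible for $\lambda \neq 1$ (since $\sigma-\lambda$ is nilpotent there and $\lambda-1$ is a unit), and on the finite-dimensional space $(M/M_0)_1$ the kernel and cokernel of any endomorphism have the same dimension, say $d$. Thus $\dim \ker(\sigma-1|M/M_0) = \dim \coker(\sigma-1|M/M_0) = d$.

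Substituting these into the alternating-sum relation from the six-term exact sequence,
\begin{equation*}
0 - h^0(M) + d - k + h^1(M) - d = 0,
\end{equation*}
and using $h^0(M)=0$, I conclude $h^1(M) = k = \rk_S M$. The main technical point is that $M_0$ is only an $S$-submodule (not $A_q$-stable), so this argument depends crucially on Remark \ref{rem that coh dep only on sigma}; beyond that, everything is bookkeeping once the structural decomposition of Proposition \ref{str of s-mod in m} is in hand.
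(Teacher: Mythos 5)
Your proof is correct and follows essentially the same route as the paper's: both isolate a full-rank free $S$-submodule via Proposition \ref{str of s-mod in m}, then use the resulting short exact sequence of $S$-modules together with the fact that $H^\bullet$ is $\Ext^\bullet_S(\bbC,-)$ (equivalently, the snake lemma on the two-term complexes $K(\cdot)$) and the kernel-equals-cokernel observation on the finite-dimensional torsion quotient. Your explicit eigenspace argument for $\dim\ker(\sigma-1|M/M_0)=\dim\coker(\sigma-1|M/M_0)$ is actually a touch cleaner than the paper's citation of Corollary \ref{cor that tor is a direct sum of fin dim}(3), since $M/M_0$ is only an $S$-module rather than an object of $\mathcal{M}_{\tor}$, but the underlying reasoning is the same.
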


\begin{proof} The first assertion is immediate from Lemma~\ref{lemma on coh}. Let us prove the second one.
By Lemma \ref{comp of rank} we know that $d:=\rk_SM<\infty$.

First assume that $S^{d}\simeq {}_SM\in \mathcal{M}_{\free}$. Consider the map $M\stackrel{\sigma -1}{{\xrightarrow{\hspace{18pt}}} } M$. Then $H^1(M)=\coker(\sigma -1)\simeq \bbC ^d$.

For a general $M\in \mathcal{M}_{\tf}$ we can find a free $S$-submodule $N\subset M$ of rank $d$ such that the $S$-module $M/N$ is torsion. (Note that $N$ and $M/N$ may not be $A_q$-modules.) Assume that we chose $N$ as in Proposition \ref{str of s-mod in m}, so that the torsion $S$-module $M/N$ has the form
$$M/N=\bigoplus _{\lambda \in \bbC}(M/N)_\lambda,$$
where $\sigma -\lambda$ acts nilpotently on $(M/N)_\lambda$ and each $(M/N)_\lambda$ is finite dimensional.

This gives the short exact sequence of $S$-modules
\begin{equation}\label{short exact of s modules}
0\to N\to M\to M/N\to 0.
\end{equation}

The cohomology $H^\bullet (M)$ depends only on the $S$-module structure of $M$ (Remark \ref{rem that coh dep only on sigma}). In fact, this cohomology in the category of $S$-modules coincides with the functor $\Ext^\bullet _S(\bbC ,M)$, where $\bbC =S/(\sigma -1)$. Therefore the sequence \eqref{short exact of s modules}
gives rise to the long exact sequence of cohomology
\begin{align*}
0&\to \Hom _S(\bbC ,N)\to \Hom _S(\bbC ,M) \to \Hom _S(\bbC ,M/N)\\
& \to \Ext ^1_S(\bbC ,N)\to \Ext ^1_S(\bbC ,M)\to \Ext ^1_S(\bbC ,M/N) \to 0.
\end{align*}
Clearly $\Hom _S(\bbC ,N) =  \Hom _S(\bbC ,M)=0$, and we have the exact sequence
\begin{equation}\label{4 term exact seq} 0\to
\Hom _S(\bbC ,M/N) \to \Ext ^1_S(\bbC ,N)\to \Ext ^1_S(\bbC ,M)\to \Ext ^1_S(\bbC ,M/N) \to 0
\end{equation}
As we showed above the space $\Ext^1_S(\bbC ,N)$ has dimension $d$. Hence the space $\Hom _S(\bbC ,M/N)$ is also finite dimensional.
The proposition now follows from the equality
$$\dim \Hom _S(\bbC ,M/N)=\dim\Ext ^1_S(\bbC ,M/N),$$
which is an immediate consequence of part (3)
of Corollary~\ref{cor that tor is a direct sum of fin dim}.

%
\end{proof}

\subsection{Riemann-Roch theorem for general objects in $\mathcal{M}$}

We summarize the results on cohomology of objects in $\mathcal{M}$ in the following theorem.

\begin{thm}\label{thm summary on cohomology} Fix $0\neq M\in \mathcal{M}$. Then

(1) $h^0(M),h^1(M)<\infty$, and $h^0(M)\leq \rk_AM$.

(2) If $M\in \mathcal{M}_{\tf}$, then $h^0(M)=0$ and $h^1(M)=\rk_S M>0$.

(3) $\chi (M):=h^0(M)-h^1(M)=-\rk_SM$. In particular, $\chi (M)=0$ if and only if $M\in \mathcal{M}_{\tor}$.
\end{thm}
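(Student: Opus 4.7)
The plan is to reduce everything to the already-established pieces via the canonical torsion/torsion-free short exact sequence from Corollary \ref{first ex seq for e}:
$$0 \to M_{\tor} \to M \to M_{\tf} \to 0,$$
which together with the global dimension $1$ assertion of Lemma \ref{lemma list of basic prop} (so that $\Ext^i(\mathcal{O},-)=0$ for $i\ge 2$) produces a four-term exact sequence
\begin{equation*}
0 \to H^0(M_{\tor}) \to H^0(M) \to H^0(M_{\tf}) \to H^1(M_{\tor}) \to H^1(M) \to H^1(M_{\tf}) \to 0.
\end{equation*}

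First, I would handle the two extreme pieces. By Proposition \ref{prop on coh of tf}, $h^0(M_{\tf}) = 0$ and $h^1(M_{\tf}) = \rk_S M_{\tf}$, which is finite. For the torsion piece, Corollary \ref{cor that tor is a direct sum of fin dim}(3) says that the $\sigma$-invariants and $\sigma$-coinvariants of $M_{\tor}$ have the same finite dimension, so $h^0(M_{\tor}) = h^1(M_{\tor}) < \infty$. Plugging these into the displayed exact sequence immediately forces $h^0(M), h^1(M) < \infty$, which is the finiteness in (1); the bound $h^0(M) \le \rk_A M$ is already contained in Lemma \ref{bound on h0}. Part (2) is literally Proposition \ref{prop on coh of tf}, so nothing new is required.

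For (3), the four-term exact sequence, combined with the vanishing $h^0(M_{\tf}) = 0$ and the fact that all six terms are finite-dimensional, gives the additivity relation
$$\chi(M) = \chi(M_{\tor}) + \chi(M_{\tf}).$$
The torsion summand satisfies $\chi(M_{\tor}) = 0$ by the equality of dimensions in Corollary \ref{cor that tor is a direct sum of fin dim}(3), and the torsion-free summand contributes $\chi(M_{\tf}) = -\rk_S M_{\tf}$ by Proposition \ref{prop on coh of tf}. Since $\rk_S$ is obviously additive in short exact sequences of $A$-modules (being a $K(A)$-dimension after localization) and $\rk_S M_{\tor} = 0$, we get $\rk_S M_{\tf} = \rk_S M$, hence $\chi(M) = -\rk_S M$. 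The final equivalence $\chi(M) = 0 \iff M \in \mathcal{M}_{\tor}$ is then just $\rk_S M = 0 \iff M \in \mathcal{M}_{\tor}$, which is definitional.

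There is no real obstacle; the only thing that requires minor care is verifying that the short exact sequence of $A_q$-modules indeed produces a long exact $\Ext$-sequence that terminates at $\Ext^1$, which is guaranteed by Lemma \ref{lemma list of basic prop}(4). Everything else is bookkeeping on previously proved statements.
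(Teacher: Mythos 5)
Your argument is correct and follows essentially the same route as the paper: the same canonical torsion/torsion-free short exact sequence, the same reduction of the torsion piece to Corollary \ref{cor that tor is a direct sum of fin dim}(3), the same use of Proposition \ref{prop on coh of tf} for the torsion-free piece, and the same additivity of $\chi$; you are merely more explicit in writing out the four-term $\Ext$-sequence that the paper leaves implicit. One small slip: in justifying the additivity of $\rk_S$ you write ``short exact sequences of $A$-modules'' and ``$K(A)$-dimension,'' but $\rk_S$ is the generic rank over $S=\bbC[\sigma,\sigma^{-1}]$, so this should read $S$-modules and $K(S)$-dimension; the sequence is one of $A_q$-modules, hence in particular of $S$-modules, and the intended additivity holds.
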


\begin{proof} (1) and (2) follow from Lemma \ref{bound on h0} and
Proposition \ref{prop on coh of tf}.

(3) Consider the short exact sequence
$$0\to M_{\tor}\to M\to M_{\tf}\to 0,$$
where $M_{\tor}\in \mathcal{M}_{\tor}$ and $M_{\tf}\in \mathcal{M}_{\tf}$. Then $\chi (M)=\chi (M_{\tor})+\chi (M_{\tf})$. We have $\chi (M_{\tor})=0$ by part (3) of Corollary \ref{cor that tor is a direct sum of fin dim}. Also $\chi (M_{\tf})=-\rk_SM_{\tf}\leq 0$ by part (2) above. It remains to notice that $\rk_SM=\rk_SM_{\tf}$.
\end{proof}

\subsection{Serre duality for general objects in $\mathcal{M}$}

\begin{thm}\label{Serre duality}
(1) For any $M\in \mathcal{M}$ we have $\chi (M)=\chi (M^\vee)$.

(2) If $M\in \mathcal{M}_{\tor}$ or $M\in \mathcal{M}_{\free}$ then $h^i(M)=h^i(M^\vee)$ for $i=0,1$
\end{thm}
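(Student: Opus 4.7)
The plan is to reduce both parts to previously established structural results. For (1), observe that Theorem~\ref{thm summary on cohomology}(3) gives $\chi(M) = -\rk_S M$ for any $M\in\mathcal{M}$, while Theorem~\ref{thm duality preserves s-rank} gives $\rk_S M = \rk_S M^\vee$. Composing these yields $\chi(M)=\chi(M^\vee)$ at once.

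For (2) in the free case $M\in\mathcal{M}_{\free}$, Theorem~\ref{the duality preserves free} guarantees $M^\vee\in\mathcal{M}_{\free}\subset\mathcal{M}_{\tf}$. Then Theorem~\ref{thm summary on cohomology}(2) gives $h^0(M)=h^0(M^\vee)=0$, after which the equality $h^1(M)=h^1(M^\vee)$ is immediate from part (1). So the free case is essentially formal.

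The torsion case is the one that requires a direct computation, and will be the main (mild) obstacle, since it is not accessible purely through rank invariants. If $M\in\mathcal{M}_{\tor}$, then by Lemma~\ref{tor is pres by dual} we may write $M\simeq A\otimes_\bbC V$ and $M^\vee\simeq A\otimes_\bbC V^*$ for a finite dimensional $S$-module $V$. I would first unwind the $S$-module structure on $V^*$ induced by the $A_q$-structure on $M^\vee=\Hom_A(M,\mathcal{O})$; the action $(\sigma f)(m)=\sigma(f(\sigma^{-1}m))$ translates, under the identification $\Hom_A(A\otimes V,A)=A\otimes V^*$, into $\sigma$ acting on $V^*$ by the transpose of $\sigma^{-1}$. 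Consequently the (generalized) $\lambda$-eigenspace of $\sigma$ on $V^*$ has the same dimension as the $\lambda^{-1}$-eigenspace of $\sigma$ on $V$.

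Next, using Lemma~\ref{lemma on coh}, I would compute $H^0(A\otimes V)=\ker(\sigma-1)$ directly: an element $\sum_n z^n\otimes v_n$ is $\sigma$-invariant iff $\sigma(v_n)=q^{-n}v_n$ for every $n$, so
\begin{equation*}
h^0(M)=\sum_{n\in\bbZ}\dim V^{\sigma=q^{-n}}.
\end{equation*}
Applying the same formula to $M^\vee$ and using the eigenvalue inversion on $V^*$ yields
\begin{equation*}
h^0(M^\vee)=\sum_{n\in\bbZ}\dim (V^*)^{\sigma=q^{-n}}=\sum_{n\in\bbZ}\dim V^{\sigma=q^{n}},
\end{equation*}
and the two sums agree after the substitution $n\mapsto -n$. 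Finally, Corollary~\ref{cor that tor is a direct sum of fin dim}(3) gives $h^0=h^1$ on torsion objects for both $M$ and $M^\vee$, so $h^1(M)=h^0(M)=h^0(M^\vee)=h^1(M^\vee)$, completing (2) in the torsion case.
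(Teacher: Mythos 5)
Your proposal is correct and follows the same route as the paper: part (1) is proved exactly as in the paper by combining Theorem~\ref{thm summary on cohomology}(3) with Theorem~\ref{thm duality preserves s-rank}, and for part (2) in the free case you similarly reduce to Theorem~\ref{the duality preserves free} and Proposition~\ref{prop on coh of tf} (deducing $h^1$ via part~(1) rather than via $\rk_S$ directly is an equivalent minor variant). In the torsion case you and the paper both use $M\simeq A\otimes_\bbC V$, $M^\vee\simeq A\otimes_\bbC V^*$ (Lemma~\ref{tor is pres by dual}) and the eigenvalue inversion $(V^*)_\lambda=(V_{\lambda^{-1}})^*$; the only difference is that you carry out the Jordan-block count explicitly as a sum over $n\in\bbZ$ of $\dim V^{\sigma=q^{\pm n}}$ and then invoke Corollary~\ref{cor that tor is a direct sum of fin dim}(3) to get $h^0=h^1$, whereas the paper states the count somewhat more tersely (implicitly normalizing the eigenvalues of $V$ to lie in a set of coset representatives $\Gamma$). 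Your more explicit computation makes the paper's ``number of Jordan blocks in $V_1$'' claim transparent without that implicit normalization, which is a small improvement in rigor, but it is not a genuinely different argument.
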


\begin{proof} (1) This follows from Theorem \ref{thm duality preserves s-rank} and part (3) of Theorem \ref{thm summary on cohomology}.

(2) If $M\in \mathcal{M}_{\tor}$, then $M=A\otimes _{\bbC}V$ for a finite dimensional $S$-module $V$ (Corollary \ref{cor that tor is a direct sum of fin dim}). Let $V=\oplus _\lambda V_\lambda$, where $V_\lambda$ is the generalized $\lambda$-eigenspace for $\sigma$. Then both $h^0(M)$ and $h^1(M)$ are equal to the number of Jordan blocks in $V_1$. By Lemma \ref{tor is pres by dual} we have $M^\vee =A\otimes V^*$ and $(V^*)_\lambda =(V_{\lambda ^{-1}})^*$. It follows that $h^i(M)=h^i(M^\vee)$ for $i=0,1$.

Now let $M\in \mathcal{M}_{\free}$.  Then $M^\vee \in \mathcal{M}_{\free}$ by Theorem \ref{the duality preserves free}.
By Proposition \ref{prop on coh of tf} we have $h^0(M)=0=h^0(M^\vee)$ and $h^1(M)=\rk_SM$, $h^1(M)=\rk_SM^\vee$. It remains to apply Theorem
\ref{thm duality preserves s-rank}.
\end{proof}

\section{The Euler form of the category $\mathcal{M}$}

Consider the Euler form on the category $\mathcal{M}$:
\begin{equation}\label{def of euler form} \chi (M,N):=\dim \Hom (M,N)-\dim \Ext ^1(M,N)
\end{equation}
(where $\Ext^1(-,-)$ is computed in the abelian category $A_q\text{-Mod}$).
Theorem \ref{Serre duality} implies that the Euler form is symmetric.

\begin{cor}\label{cor on symm eulet form} The Euler form \eqref{def of euler form} is symmetric, i.e. for any $M,N\in \mathcal{M}$ we have
$$\chi(M,N)=\chi(N,M).$$
\end{cor}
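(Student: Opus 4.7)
The strategy is to rewrite $\chi(M,N)$ as the Euler characteristic of a single $A_q$-module, $M^\vee \otimes_A N$, and then invoke Theorem \ref{Serre duality}(1).

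The first step is to establish
\[
\chi(M,N) \;=\; \chi(M^\vee \otimes_A N).
\]
For the $\Hom$-piece, observe that $\Hom_{A_q}(\mathcal{O},X) = H^0(X)$ for any $A_q$-module $X$; combining this with Lemma \ref{lemma useful later 1}(1) gives $\Hom_{A_q}(M,N) = \Hom_{A_q}(\mathcal{O},\mathcal{H}om(M,N)) = H^0(\mathcal{H}om(M,N))$, and Lemma \ref{lemma useful later 1}(2) then identifies this with $H^0(M^\vee \otimes_A N)$. For the $\Ext^1$-piece, I would apply the Grothendieck composition-of-derived-functors formula to the factorization $\Hom_{A_q}(M,-) = \Hom_{A_q}(\mathcal{O},-) \circ \mathcal{H}om(M,-)$. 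Since $M^\vee \in \mathcal{M}$ is $A$-free by Lemma \ref{any obj is a-free}, the functor $\mathcal{H}om(M,-) \cong M^\vee \otimes_A(-)$ is exact; moreover it admits the exact left adjoint $(-) \otimes_A M$ via the tensor–Hom adjunction encoded in Lemma \ref{lemma useful later 1}(3), and therefore preserves injectives. The Grothendieck spectral sequence collapses to give $\Ext^1_{A_q}(M,N) = \Ext^1_{A_q}(\mathcal{O},M^\vee \otimes_A N) = H^1(M^\vee \otimes_A N)$.

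The second step computes the dual of $M^\vee \otimes_A N$. Applying Lemma \ref{lemma useful later 1}(3) (with the tensor factors swapped by symmetry) together with the involutivity $M^{\vee\vee}=M$,
\[
(M^\vee \otimes_A N)^\vee \;=\; \mathcal{H}om(N,\mathcal{H}om(M^\vee,\mathcal{O})) \;=\; \mathcal{H}om(N,M) \;=\; N^\vee \otimes_A M.
\]
Theorem \ref{Serre duality}(1) says $\chi(X) = \chi(X^\vee)$ for every $X \in \mathcal{M}$, so
\[
\chi(M,N) \;=\; \chi(M^\vee \otimes_A N) \;=\; \chi\bigl((M^\vee \otimes_A N)^\vee\bigr) \;=\; \chi(N^\vee \otimes_A M) \;=\; \chi(N,M),
\]
which is the desired symmetry.

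The principal obstacle is the $\Ext^1$ half of the first step: one must know that $\mathcal{H}om(M,-)$ carries injective $A_q$-modules to $H^1$-acyclic modules. The cleanest route is through the adjunction sketched above; if the version of the adjunction recorded in Lemma \ref{lemma useful later 1}(3) is deemed insufficient (it is formulated inside $\mathcal{M}$), one may instead fix a projective $A_q$-resolution of $M$ of length at most $1$, available by Lemma \ref{lemma list of basic prop}(4), and perform the identification by a direct diagram chase applying $\Hom_{A_q}(-,N)$ versus $\Hom_{A_q}(\mathcal{O},-\otimes_A N)$.
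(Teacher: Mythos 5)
Your argument is correct and follows the same overall strategy as the paper: reduce $\chi(M,N)$ to the Euler characteristic of the single module $M^\vee\otimes_A N$, identify $(M^\vee\otimes_A N)^\vee\cong N^\vee\otimes_A M$, and conclude by the fact $\chi(X)=\chi(X^\vee)$ from Theorem \ref{Serre duality}(1). The only point of divergence is how the comparison $\Ext^\bullet_{A_q}(M,N)\cong H^\bullet(\mathcal{H}om(M,N))$ is obtained. The paper isolates this as Lemma \ref{assume lemma} and proves it concretely: it tensors the standard two-term projective resolution of $\mathcal{O}$ with $M$ over $A$ to get the explicit resolution $0\to A_q\otimes_A M\to A_q\otimes_A M\to M\to 0$, uses Lemma \ref{lemma useful later 1}(4) to see that $A_q\otimes_A M$ is projective and that $\Hom_{A_q}(A_q\otimes_A M,N)\cong \mathcal{H}om(M,N)$, and reads off the identification of complexes. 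You instead derive the comparison from the Grothendieck spectral sequence for $\Hom_{A_q}(M,-)=\Hom_{A_q}(\mathcal{O},-)\circ\mathcal{H}om(M,-)$, exploiting that $\mathcal{H}om(M,-)\cong M^\vee\otimes_A(-)$ is exact (since $M^\vee$ is $A$-free) and preserves injectives (via the exact left adjoint $(-)\otimes_A M$). That route is slightly more abstract and, as you note yourself, requires the tensor--Hom adjunction for arbitrary $A_q$-modules, not just objects of $\mathcal{M}$; this extension is straightforward (the adjunction is just the $\sigma$-equivariant form of the usual one over $A$), but is why the paper's explicit-resolution argument is a cleaner way to discharge the injective-acyclicity issue. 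Both proofs are sound; the paper's buys concreteness and self-containment, yours is shorter once one accepts the adjunction formalism in full generality.
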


We need a lemma.

\begin{lemma}\label{assume lemma} For $M,N\in \mathcal{M}$ we have an isomorphism
$$H^\bullet (\mathcal{H}om (M,N))=\Ext ^\bullet(M,N).$$
\end{lemma}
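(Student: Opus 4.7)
The plan is to compute $\Ext^\bullet_{A_q}(M,N)$ via an injective resolution of $N$, convert it to an injective resolution of $\mathcal{H}om(M,N)$ using exactness of $\mathcal{H}om(M,-)$, and then match the two computations through the tensor-hom adjunction.

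First I would record two properties of the endofunctor $\mathcal{H}om(M,-)$ of $A_q\text{-Mod}$, valid because $M \in \mathcal{M}$. \textbf{(a)} It is exact: the forgetful functor to $\bbC\text{-Vect}$ reflects exactness, and on underlying $\bbC$-vector spaces $\mathcal{H}om(M,-)$ coincides with $\Hom_A(M,-)$, which is exact since $M$ is a free $A$-module by Lemma \ref{any obj is a-free}. \textbf{(b)} It is right adjoint to $- \otimes_A M$: combining parts (1) and (3) of Lemma \ref{lemma useful later 1} (take $\sigma$-invariants of the $A_q$-module isomorphism in (3), then apply (1)) yields a natural bijection
$$\Hom_{A_q}(K \otimes_A M,\, N) \;=\; \Hom_{A_q}(K,\, \mathcal{H}om(M,N)).$$
The left adjoint $- \otimes_A M$ is also exact, because $M$ is $A$-flat (being $A$-free); a standard argument then shows its right adjoint $\mathcal{H}om(M,-)$ preserves injective objects in $A_q\text{-Mod}$.

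Next, choose an injective resolution $0 \to N \to I^\bullet$ in $A_q\text{-Mod}$. By (a), the complex $0 \to \mathcal{H}om(M,N) \to \mathcal{H}om(M,I^\bullet)$ is exact; by (b), each $\mathcal{H}om(M,I^k)$ is injective. Hence $\mathcal{H}om(M,I^\bullet)$ is an injective resolution of $\mathcal{H}om(M,N)$. Applying $\Hom_{A_q}(\mathcal{O},-)$ and using the adjunction with $K = \mathcal{O}$, together with the fact that $\mathcal{O}$ is the unit of the tensor product (so $\mathcal{O} \otimes_A M = M$), the resulting complex is
$$\Hom_{A_q}(\mathcal{O},\mathcal{H}om(M,I^\bullet)) \;=\; \Hom_{A_q}(M,I^\bullet).$$
The cohomology of the left complex is $\Ext^\bullet_{A_q}(\mathcal{O},\mathcal{H}om(M,N)) = H^\bullet(\mathcal{H}om(M,N))$ by definition, while that of the right complex is $\Ext^\bullet_{A_q}(M,N)$, yielding the desired equality.

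The principal subtlety is to ensure that the adjunction from Lemma \ref{lemma useful later 1} extends to arbitrary $N \in A_q\text{-Mod}$ (since an injective resolution $I^\bullet$ of $N$ need not lie in $\mathcal{M}$), and that it is natural enough in both variables to give an isomorphism of complexes commuting with differentials. Both hold because the constructions in Lemma \ref{lemma useful later 1} use only the $A$-module structure of $M$ together with the $A_q$-action on the second argument, so they extend verbatim to general $N$ and are tautologically natural. This is the point that requires checking carefully, but it is essentially a diagram chase once the formulas of Lemma \ref{lemma useful later 1} are in hand.
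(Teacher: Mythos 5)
Your proof is correct, but it takes the dual route to the paper's. The paper starts from the explicit two-term free resolution $0\to A_q\xrightarrow{\cdot(\sigma-1)}A_q\to\mathcal{O}\to 0$, tensors with $M$ over $A$ to get a projective resolution of $M$ (using Lemma~\ref{lemma useful later 1}(4) to see that $A_q\otimes_A M$ is projective), applies $\Hom_{A_q}(-,N)$, and then identifies the result with the complex $0\to\mathcal{H}om(M,N)\xrightarrow{(\sigma-1)\cdot}\mathcal{H}om(M,N)\to 0$, again via part (4). You instead resolve $N$ injectively, use exactness of $\mathcal{H}om(M,-)$ (free over $A$) and preservation of injectives by the right adjoint of the exact functor $-\otimes_A M$, and conclude via $\Hom_{A_q}(\mathcal{O},\mathcal{H}om(M,-))\cong\Hom_{A_q}(M,-)$. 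The gain of the paper's argument is explicitness: it reduces everything to a concrete two-term complex and never needs enough injectives or the extension of the adjunction beyond $\mathcal{M}$. The gain of yours is conceptual economy: it is the standard ``change of rings via adjunction'' argument and makes clear exactly which structural facts (exactness of $\mathcal{H}om(M,-)$, adjointness) drive the result. Your flagged caveat about extending Lemma~\ref{lemma useful later 1} to arbitrary $N$ is genuine but, as you note, harmless: the tensor-hom adjunction $\Hom_{A_q}(K\otimes_A M,N)\cong\Hom_{A_q}(K,\mathcal{H}om(M,N))$ holds for arbitrary $N\in A_q\text{-Mod}$ because the underlying $A$-module isomorphism is the usual one and compatibility with $\sigma$ is a direct check using only the diagonal $\sigma$-action on $K\otimes_A M$ and the conjugation action on $\mathcal{H}om(M,N)$.
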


Assume the lemma for now. We know that $\mathcal{H}om (M,N)=M^\vee \otimes N$  and  $(M^\vee \otimes N)^\vee =N^\vee \otimes M$ (Lemma \ref{lemma useful later 1}). So Theorem \ref{Serre duality} and Lemma \ref{assume lemma} imply that
$$\chi (M,N)=\chi (M^\vee \otimes N)=\chi(N^\vee \otimes M)=\chi (N,M),$$ which proves the corollary. So it remains to prove the lemma.

\begin{proof} Recall the standard projective resolution of the $A_q$-module
$\mathcal{O}=A$
\begin{equation} \label{stand res of a} 0\to A_q\stackrel{\cdot (\sigma -1)}{{\xrightarrow{\hspace{24pt}}} } A_q\to \mathcal{O}\to 0.
\end{equation}
We apply the exact functor $(-)\otimes _AM$ to the sequence \eqref{stand res of a} to get a resolution of $M$
\begin{equation} \label{stand res} 0\to A_q\otimes M\stackrel{\cdot ((\sigma -1)\otimes\id)}{{\xrightarrow{\hspace{44pt}}} } A_q\otimes M\to M\to 0.
\end{equation}
It follows from part (4) of Lemma \ref{lemma useful later 1} that the object $A_q\otimes M\in A_q\text{-Mod}$ is projective, so the complex
$$0\to\Hom _{A_q}(A_q\otimes M,N)\stackrel{((\sigma -1)\otimes id)^*}{{\xrightarrow{\hspace{48pt}}} } \Hom _{A_q}(A_q\otimes M,N)\to 0$$
computes $\Ext^\bullet (M,N)$. It also follows from part (4) of Lemma \ref{lemma useful later 1} (and its proof)
that it is isomorphic to the complex
$$0\to \mathcal{H}om(M,N)\stackrel{(\sigma -1)\cdot}{{\xrightarrow{\hspace{22pt}}} }
\mathcal{H}om(M,N)\to 0$$
which computes $H^\bullet (\mathcal{H}om(M,N))$ by Lemma \ref{lemma on coh}. This proves the lemma.
\end{proof}

\section{Some additional results about the category $\mathcal{M}$}\label{addit results}

The results of this section were suggested to us by Alexey Elagin.

\subsection{The derived category $D^b(\mathcal{M})$}
For $M,N\in \mathcal{M}$ we defined the groups $\Ext ^i(M,N)$ as the corresponding Ext-groups in the category $A_q\text{-Mod}$. However, these
groups coincide with the ones computed in the category $\mathcal{M}$.

\begin{prop} (i) The abelian category $\mathcal{M}$ is hereditary.

(ii) The natural functor $D^b(\mathcal{M})\to D^b_{\mathcal{M}}(A_q\text{-Mod})$ is an equivalence of categories.
\end{prop}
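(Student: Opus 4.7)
The plan is to deduce both statements from the fact that $\mathcal{M}$ is a Serre (thick) subcategory of $A_q\text{-Mod}$. Closure of $\mathcal{M}$ under quotients and extensions is immediate; closure under subobjects follows because $A=\bbC[z,z^{-1}]$ is Noetherian, so any $A_q$-submodule of an object of $\mathcal{M}$ is automatically finitely generated over $A$. An immediate consequence of closure under extensions is the equality $\Ext^1_{\mathcal{M}}(M,N)=\Ext^1_{A_q}(M,N)$ for all $M,N\in\mathcal{M}$, since the middle term of a Yoneda $1$-extension in $A_q\text{-Mod}$ between modules in $\mathcal{M}$ lies in $\mathcal{M}$.

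For part (i), I would show $\Ext^i_{\mathcal{M}}(M,N)=0$ for $i\geq 2$ by Yoneda splicing. Given a $2$-extension $\xi=[0\to N\to X\to Y\to M\to 0]$ in $\mathcal{M}$, put $K:=\mathrm{im}(X\to Y)$, which is in $\mathcal{M}$ by the Serre property, and decompose $\xi$ as the splice of $\alpha=[0\to N\to X\to K\to 0]$ and $\beta=[0\to K\to Y\to M\to 0]$. The connecting map $\delta\colon\Ext^1_{\mathcal{M}}(M,K)\to\Ext^2_{\mathcal{M}}(M,N)$ of the long exact sequence attached to $\alpha$ satisfies $\delta(\beta)=\xi$. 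The analogous long exact sequence in $A_q\text{-Mod}$ has right end $\Ext^2_{A_q}(M,N)=0$ by Lemma~\ref{lemma list of basic prop}(4), so $\beta$ lifts to some $\tilde\beta\in\Ext^1_{A_q}(M,X)$. The $\Ext^1$-matching places $\tilde\beta$ in $\Ext^1_{\mathcal{M}}(M,X)$, and by naturality of the two long exact sequences it still maps to $\beta$. Hence $\delta(\beta)=0$, so $\xi=0$, and $\Ext^2_{\mathcal{M}}=0$. A standard Yoneda induction (splitting an $(i{+}1)$-extension as a $1$-extension followed by an $i$-extension through an intermediate image in $\mathcal{M}$) upgrades this to $\Ext^i_{\mathcal{M}}=0$ for all $i\geq 2$, proving (i).

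For part (ii), I would use formality of complexes in hereditary derived categories. Both $\mathcal{M}$ (by (i)) and $A_q\text{-Mod}$ (by Lemma~\ref{lemma list of basic prop}(4)) are hereditary, so every bounded complex $C$ in either derived category is isomorphic to $\bigoplus_i H^i(C)[-i]$. This yields fully faithfulness: on the formal decompositions, $\Hom$-spaces reduce to finite direct sums of Ext-groups in the respective heart, which match by the preceding paragraph. It also yields essential surjectivity: for $C^\bullet\in D^b_{\mathcal{M}}(A_q\text{-Mod})$, the isomorphism $C^\bullet\cong\bigoplus_i H^i(C^\bullet)[-i]$ has right-hand side built from objects of $\mathcal{M}$, and so $C^\bullet$ is isomorphic to an object coming from $D^b(\mathcal{M})$.

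The main obstacle is the Yoneda comparison in (i). The delicate step is invoking naturality of the connecting homomorphisms of the long exact sequences with respect to the inclusion $\mathcal{M}\hookrightarrow A_q\text{-Mod}$; it is precisely the closure of $\mathcal{M}$ under subobjects (via Noetherianity of $A$), which keeps the intermediate image $K$ inside $\mathcal{M}$, that permits the lift $\tilde\beta$ to transfer from $A_q\text{-Mod}$ back to $\mathcal{M}$.
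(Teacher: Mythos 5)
Your argument is correct, but it is considerably more self-contained than what the paper actually does. The paper disposes of both parts by a one-line reduction and two citations: it observes that $\mathcal{M}$ is a thick abelian subcategory of the hereditary abelian category $A_q\text{-Mod}$, and then invokes \cite{RVdB}, Lemma A.1 and Prop.\ A.2, for heredity of $\mathcal{M}$ (part (i)), and \cite{Br}, Th.\ 5.1, or \cite{Kr}, Prop.\ 4.4.17, for the derived equivalence (part (ii)). Your proof supplies exactly the content behind those citations: the Yoneda-splice computation showing $\Ext^2_{\mathcal{M}}=0$ by lifting the right-hand $1$-extension through the surjection $\Ext^1_{A_q}(M,X)\twoheadrightarrow\Ext^1_{A_q}(M,K)$, with the key point (which you correctly isolate) being that $K=\mathrm{im}(X\to Y)$ stays in $\mathcal{M}$ by closure under subobjects, itself a consequence of the Noetherianity of $A$; and, for (ii), the formality of bounded complexes over a hereditary heart combined with the matching of $\Ext^0$ and $\Ext^1$ across the inclusion. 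What the paper's route buys is brevity and applicability beyond the hereditary case (the results of \cite{Br} and \cite{Kr} are more general); what your route buys is that a reader can verify everything on the page without chasing references, and it makes transparent exactly which structural facts about $\mathcal{M}$ (thickness, heredity of the ambient category, $\Ext^1$-matching) are doing the work. One small remark: the "standard Yoneda induction" you invoke to pass from $\Ext^2_{\mathcal{M}}=0$ to $\Ext^i_{\mathcal{M}}=0$ for $i\ge 2$ would benefit from a word of justification (splice an $(i{+}1)$-extension into a $1$-extension followed by an $i$-extension; the connecting map lands in $\Ext^{i}_{\mathcal{M}}(K,N)$, which vanishes by inductive hypothesis since the $\Ext^{i}$ groups vanish for all pairs of objects in $\mathcal{M}$), but this is standard and the logic is sound.
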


\begin{proof} (i) The category $A_q\text{-Mod}$ is hereditary and $\mathcal{M}$ is its thick abelian subcategory. Hence $\mathcal{M}$ is also hereditary by \cite{RVdB}, (Lemma A.1, Prop. A.2).

(ii) This is \cite{Br} (Th. 5.1) or \cite{Kr} (Prop. 4.4.17).
\end{proof}

\subsection{The quiver description of the category $\mathcal{M}$}

Let $\mathcal{S}\subset \mathcal{M}$ be the collection of all simple objects in $\mathcal{M}$. Consider the following quiver $Q$: the set of vertices is the set $\mathcal{S}$. For $M,N\in \mathcal{S}$ the
number of arrows from $M$ to $N$ is the dimension of the space $\Ext ^1(N,M)$.

\begin{remark} The quiver $Q$ is {\rm symmetric}, i.e. the number of arrows from $M$ to $N$ is the same as the number of arrows from $N$ to $M$. Indeed, if $M,N\in \mathcal{S}$ are non-isomorphic, then $\Hom (M,N)=\Hom (N,M)=0$, hence $\Ext^1(M,N)=\Ext^1(N,M)$ by Corollary \ref{cor on symm eulet form}.
\end{remark}

Let $\text{Mod-}\bbC Q$ be the category of all right modules over the
path algebra of the quiver $\bbC Q$. Let $\mathrm{mod}_0\text{-}\bbC Q\subset \text{Mod-}\bbC Q$ be the full abelian subcategory of finite dimensional
nilpotent representations. This is the thick abelian subcategory generated by $1$-dimensional (right) representations of $\bbC Q$.

\begin{prop} \label{quiver descr} There is a natural equivalence of abelian categories
$$ \mathcal{M}\to \tmod_0\text{-}\bbC Q$$
which takes a simple object $M\in \mathcal{M}$ to the $1$-dimenional module supported at the corresponding vertex of $Q$.
\end{prop}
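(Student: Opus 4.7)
The plan is to invoke the classical characterization of hereditary, Artinian, $\bbC$-linear length categories with Schurian simple objects as categories of finite-dimensional nilpotent representations of their Ext-quiver. To do this I would first verify the hypotheses. The category $\mathcal{M}$ is $\bbC$-linear abelian. It is hereditary: by Lemma~\ref{lemma list of basic prop}(4), $A_q$ has global homological dimension $1$, and by part (i) of the preceding proposition, $\Ext^i$ computed in $\mathcal{M}$ agrees with $\Ext^i$ in $A_q\text{-Mod}$. Every object has finite length by Corollary~\ref{cor on Artinian}, so $\mathcal{M}$ is Artinian. By Lemma~\ref{assume lemma}, for $M,N \in \mathcal{M}$ one has $\Ext^i(M,N) = H^i(\mathcal{H}om(M,N))$; since $\mathcal{H}om(M,N) = M^\vee \otimes N$ again lies in $\mathcal{M}$, Theorem~\ref{thm summary on cohomology} shows that $\Hom(M,N)$ and $\Ext^1(M,N)$ are finite-dimensional over $\bbC$. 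Finally, each simple $S \in \mathcal{S}$ has $\End(S) = \bbC$ by Schur's lemma, because $\End(S)$ is a finite-dimensional division algebra over the algebraically closed field $\bbC$.

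With these hypotheses in place, the result from \cite{El} (or the standard folklore theorem on hereditary length categories) asserts that any such $\mathcal{M}$ is equivalent to $\mathrm{mod}_0\text{-}\bbC Q'$, where $Q'$ denotes the Ext-quiver of $\mathcal{M}$, via the unique $\bbC$-linear exact equivalence sending each simple $S$ to the $1$-dimensional representation supported at the vertex $S$. By the very definition of $Q$ in the statement of the proposition, $Q' = Q$, which produces the desired equivalence and the normalization on simples.

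The main obstacle in this route is the finite-dimensionality of $\Hom$ and $\Ext^1$ between arbitrary objects of $\mathcal{M}$; this crucially uses both that the internal $\mathcal{H}om(-,-)$ stays inside $\mathcal{M}$ and the bounds $h^0 \leq \rk_A$ and finiteness of $h^1$ from Theorem~\ref{thm summary on cohomology}. If one prefers a hands-on construction of the equivalence rather than citing a general result, one builds $F$ by induction on length: set $F(S)$ equal to the $1$-dimensional vertex-representation for each simple $S$, and extend to an object $M$ of length $n \geq 2$ via a short exact sequence $0 \to N \to M \to S \to 0$ with $S$ simple, interpreting the extension class in $\Ext^1(S,N)$ as arrow data adjoining a new vertex of $Q$ to the representation $F(N)$. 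Well-definedness (independence of the chosen filtration) and fullness then rest on the hereditary property together with the symmetry of the Euler form (Corollary~\ref{cor on symm eulet form}); essential surjectivity and faithfulness follow directly from the construction.
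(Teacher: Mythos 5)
Your argument is correct and matches the paper's approach: the paper simply cites Theorem~4.12 and Corollary~4.14 of Elagin's paper \cite{El}, and your proof amounts to applying that same result while explicitly verifying its hypotheses (hereditary, length, Ext-finite over $\bbC$ via Lemma~\ref{assume lemma} and Theorem~\ref{thm summary on cohomology}, Schurian simples), which the paper leaves implicit.
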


\begin{proof} This is Theorem 4.12 and Corollary 4.14 in \cite{El}.
\end{proof}

Proposition \ref{quiver descr} has several applications.

\subsubsection{The group of auto-equivalences $\Aut(\mathcal{M})$} Recall that a right $\bbC Q$-module $P$ consists of the following data: to each vertex $v$ one assigns a vector space $P_v$ and for any two vertices $v,w$ one specifies a linear map $P_{vw}: \bbC ^{d_{vw}}\to \Hom _{\bbC}(P_w,P_v)$, where $d_{vw}$ is the number of arrows from $v$ to $w$.

Let $\Auteq(\text{Mod-}\bbC Q)$ be the group of auto-equivalences of the abelian category $\text{Mod-}\bbC Q$ (up to an isomorphism of functors).

Fix vertices $v,w$. Then there is a homomorphism
$$\alpha _{vw}:\GL_{d_{vw}}(\bbC)\to \Auteq(\text{Mod-}\bbC Q)$$
defined as follows: for a right $\bbC Q$-module $P$ the homomorphism $P_{vw}$ as above is replaced by the composition $P_{vw}\cdot g^{-1}$, for each $g\in \GL_{d_{vw}}$. And for right $\bbC Q$-modules $P,P'$ the map
$$\alpha _{vw}(g)\colon \Hom(P,P')\to \Hom(\alpha _{vw}(g)(P),\alpha _{vw}(g)(P'))$$ is the identity.

Clearly the auto-equivalences $\alpha _{vw}(g)$ preserve the subcategory
$\tmod_0\text{-}\bbC Q$.


The next proposition shows that the group of auto-equivalences of the abelian category $\mathcal{M}\simeq \tmod_0\text{-}\bbC Q$ is huge. This is in the contrast with the classical case: for an elliptic curve $E$ the group of auto-equivalences of the abelian category $\coh_E$ is generated by the automorphisms of the variety $E$ and by taking
the tensor product with a line bundle.

\begin{prop} For any vertices $v\neq w$ of $Q$ the kernel of the group
homomorphism $\alpha _{vw}:\GL_{d_{vw}}(\bbC)\to \Auteq(\tmod_0\text{-}\bbC Q)$ is contained in the center of $\GL_{d_{vw}}(\bbC)$.
\end{prop}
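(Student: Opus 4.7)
The plan is to show that if $g \in \ker(\alpha_{vw})$, then $g$ is a scalar matrix. Fix a natural isomorphism $\eta \colon \id \Rightarrow \alpha_{vw}(g)$. I will evaluate $\eta$ on a single carefully chosen test module $P \in \tmod_0\text{-}\bbC Q$, arranged so that the arrow compatibility at the pair $(v,w)$ records the matrix $g$ inside $\eta_{P,w}$, while naturality against morphisms $P \to S_w$ independently forces $\eta_{P,w}$ to be a scalar matrix. Comparing these two descriptions will force $g$ itself to be a scalar.

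Concretely, setting $d := d_{vw}$, I would take $P$ with $P_v = \bbC$, $P_w = \bbC^d$, and $P_u = 0$ for $u \notin \{v,w\}$, letting the $d$ arrows from $v$ to $w$ (i.e.\ the linear maps $A_i \colon P_w \to P_v$) be the coordinate functionals on $\bbC^d$, and all other structure maps zero. This $P$ is finite dimensional and has all paths of length $\ge 2$ acting trivially, so it lies in $\tmod_0\text{-}\bbC Q$. An isomorphism $\eta_P = (\eta_{P,v}, \eta_{P,w}) \in \bbC^* \times \GL_d(\bbC)$ is constrained only by the $d$ arrows from $v$ to $w$; writing these equations out, with the $i$-th twisted arrow equal to $\sum_j (g^{-1})_{ji} A_j$, leads after a short matrix calculation to the identity
\begin{equation}\label{eq:proof-plan-arrow}
\eta_{P,w} = \eta_{P,v}\, g^T.
\end{equation}

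On the other hand, I would observe that $\Hom(P, S_w) \cong (\bbC^d)^*$: a morphism $P \to S_w$ must kill $P_v$, and any linear functional on $P_w$ extends uniquely since every remaining compatibility reads $0 = 0$. The simple $S_w$ has trivial $v$-part, so $\alpha_{vw}(g)$ fixes it, and $\eta_{S_w}$ acts on the one-dimensional module $S_w$ as some scalar $\lambda_w \in \bbC^*$. Naturality of $\eta$ applied to each such morphism yields $f\, \eta_{P,w} = \lambda_w f$ for every row vector $f \in (\bbC^d)^*$, which forces $\eta_{P,w} = \lambda_w I$. Substituting back into \eqref{eq:proof-plan-arrow} gives $g^T = (\lambda_w / \eta_{P,v})\, I$, so $g$ is a scalar matrix and lies in $Z(\GL_d(\bbC))$.

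The only real choice in the argument is the test module $P$: it must carry nontrivial $d_{vw}$ arrows (to record the twist in $\eta_{P,w}$) and admit a full $(\bbC^d)^*$-worth of morphisms to $S_w$ (so that naturality cuts $\eta_{P,w}$ down to a scalar). I expect no serious obstacle once $P$ is chosen correctly; the rest is a short matrix computation combined with the definition of naturality. The hypothesis $v \neq w$ is used both to allow $P_v$ and $P_w$ to have independent dimensions and to guarantee that $\alpha_{vw}(g)$ fixes $S_w$.
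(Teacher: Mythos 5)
Your proof is correct, and it takes a genuinely different route from the paper's. The paper works with the family $\mathcal{P}$ of all $2$-dimensional representations $P$ with $P_v=P_w=\bbC$, nonzero $P_{vw}$ and $P_{wv}=0$; the isomorphism classes of such $P$ form a copy of $\bbP\bigl((\bbC^{d_{vw}})^*\bigr)$, the functor $\alpha_{vw}(g)$ acts on these iso-classes through the natural projective action of $g$, and an autoequivalence isomorphic to the identity must act trivially on iso-classes. So the paper needs only the weaker input that an isomorphic-to-identity autoequivalence permutes objects trivially up to isomorphism, and never touches the naturality squares. You instead fix a single $(d_{vw}+1)$-dimensional test module, extract the matrix $g^T$ (up to the scalar $\eta_{P,v}$) from the arrow-compatibility condition for $\eta_P$, and then use naturality against the family of morphisms $P\to S_w$ to force $\eta_{P,w}$ to be scalar. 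Both are short and correct; the paper's argument is a little more conceptual and avoids the matrix computation, while yours makes more explicit use of the natural transformation. Your verification that $\Hom(P,S_w)\cong(\bbC^d)^*$, that $\alpha_{vw}(g)$ fixes $S_w$, and that $P$ is nilpotent (every length-two path factors through a zero structure map) are all correct, and the hypothesis $v\neq w$ is used exactly where you say.
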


\begin{proof} We may assume that $d_{vw}\neq 0$.
Consider the collection $\mathcal{P}$ of objects in $\tmod_0\text{-}\bbC Q$ consisting of modules $P$ such that $P_w=\bbC=P_v$ and $P_z=0$ for any vertex $z\neq v,w$. We require the morphism
$$P_{vw}:\bbC ^{d_{vw}}\to \Hom (P_w,P_v)=\bbC$$
to be nonzero and $P_{wv}=0$.

Thus to define an object in $\mathcal{P}$ is the same as to give a nonzero vector in the dual vector space $(\bbC ^{d_{vw}})^*$. Two such modules are isomorphic if and only if the corresponding vectors are proportional.
Let $g\in \GL_{d_{vw}}(\bbC)$ be such that the functor $\alpha _{vw}(g):
\tmod_0\text{-}\bbC Q \to \tmod_0\text{-}\bbC Q$ is isomorphic to the identity. Then $\alpha _{vw}(g)$ acts trivially on the set of isomorphism classes of objects in $\mathcal{P}$, i.e. $g$ lies in the center of $\GL_{d_{vw}}(\bbC)$.
\end{proof}

\begin{remark} The previous proposition applies to any quiver.
\end{remark}

\begin{prop} The graph $Q$ is connected.
\end{prop}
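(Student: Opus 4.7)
The plan is to show that every vertex of $Q$ is joined to $\mathcal{O}$ by a path of length at most two. I would first classify the simple objects of $\mathcal{M}$. For a simple $X\in \mathcal{M}$, the torsion subobject $X_{\tor}\subset X$ (Lemma~\ref{tor is a submodule}) is either $0$ or $X$, so $X\in \mathcal{M}_{\tf}$ or $X\in \mathcal{M}_{\tor}$. In the torsion case, the equivalence $\overline\theta$ of Lemma~\ref{lemma on theta en equiv} identifies $X$ with a simple object of $S_{\fd}/q^{\bbZ}$; since the simples there are the $1$-dimensional $S$-modules $\bbC_c$, this gives $X\simeq L_c:=A_q/A_q\cdot(\sigma-c)$, a degree-$0$ line bundle parametrized by $\bar c\in \bbC^*/q^{\bbZ}$. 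In the torsion-free case, $\rk_S X>0$ because a nonzero $S$-torsion-free finitely generated module necessarily has positive $S$-rank. Throughout, Lemma~\ref{assume lemma} reduces edge computations to cohomology via $\Ext^1(M,N)=H^1(\mathcal{H}om(M,N))=H^1(M^\vee\otimes N)$.

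For a simple $X\in \mathcal{M}_{\tf}$, Proposition~\ref{prop on coh of tf} gives $h^1(X)=\rk_S X>0$, so $\Ext^1(\mathcal{O},X)=H^1(X)\neq 0$ and there is an edge between $\mathcal{O}$ and $X$ in $Q$. This handles all non-torsion simples.

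The remaining case is a simple $X=L_c$ with $\bar c\neq \bar 1$, where $h^1(L_c)=0$ by Theorem~\ref{rr for line bundles}(2), so there is no direct edge to $\mathcal{O}$. As intermediate hub I would use the degree-$1$ line bundle $L:=A_q/A_q\cdot(\sigma-z)$. Since $L\in \mathcal{M}_{\tf}$ with $\rk_S L=1$, the previous step yields an edge between $\mathcal{O}$ and $L$. For the edge between $L$ and $L_c$: Lemma~\ref{tor is pres by dual} gives $L_c^\vee\simeq L_{c^{-1}}$, and a direct computation on cyclic generators gives $L_{c^{-1}}\otimes L\simeq A_q/A_q\cdot(\sigma-c^{-1}z)$, which is a line bundle of degree $1$ in the sense of Section~\ref{sect on pic gr}. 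Hence by Theorem~\ref{rr for line bundles}(3), $\Ext^1(L_c,L)=H^1(L_{c^{-1}}\otimes L)$ has dimension $1$, producing the second edge and completing the path from $\mathcal{O}$ to $L_c$ through $L$.

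The conceptual point is that any line bundle of nonzero degree bridges the otherwise-disconnected family of degree-$0$ line bundles: the degree-$0$ line bundles form an $\Ext^1$-orthogonal family and are $\Ext^1$-orthogonal to $\mathcal{O}$, but each has non-vanishing $\Ext^1$ with every line bundle of nonzero degree. No step is technically demanding; the only content beyond the classification of simples is the Riemann–Roch computation recalled from Theorem~\ref{rr for line bundles}.
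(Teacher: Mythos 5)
Your proof is correct, and it follows the same basic strategy as the paper: use $\mathcal{O}$ as a hub, connect torsion-free simples to it directly via $h^1$, and bridge the degree-$0$ line bundles through a line bundle of nonzero degree.

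The one genuine difference is in the torsion case. You first invoke the equivalence $\overline{\theta}$ of Lemma~\ref{lemma on theta en equiv} to \emph{classify} the torsion simples as the degree-$0$ line bundles $L_c$, and then the tensor product $L_{c^{-1}}\otimes L$ is just another line bundle, so Riemann--Roch (Theorem~\ref{rr for line bundles}) applies immediately. The paper skips the classification: for an arbitrary simple $M\in\mathcal{M}_{\tor}$ and a line bundle $L$ of nonzero degree, it invokes Proposition~\ref{rank of tensor prod} (the formula $\rk_S(L\otimes M)=\rk_S(L)\cdot\rk_A(M)$) to conclude $\rk_S(L\otimes M)\neq 0$, and then applies the general Riemann--Roch of Theorem~\ref{thm summary on cohomology}(3). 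Your route is more explicit and keeps everything in the line-bundle calculus of Section~\ref{sect on pic gr}; the paper's route is marginally shorter because it never needs to know what the torsion simples actually look like. Both rely on facts already established in the paper, so this is a matter of taste rather than substance. One small detail worth making explicit (you implicitly use it): $\mathcal{M}_{\tor}$ is a Serre subcategory of $\mathcal{M}$, so a simple $X\in\mathcal{M}$ lying in $\mathcal{M}_{\tor}$ is simple there, which is what lets you transport simplicity across $\overline{\theta}$.
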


\begin{proof} It follows from Theorem \ref{rr for line bundles} that the subgraph of $Q$ supported on the line bundles in $\mathcal{M}$ is connected. Let $M\in \mathcal{M}$ be a simple object, i.e. $M$ is a vertex in $Q$. If $M\notin \mathcal{M}_{\tor}$, then $\Ext^1(\mathcal{O},M)\neq 0$ by part (3) of Theorem \ref{thm summary on cohomology}. Suppose that $M\in \mathcal{M}_{\tor}$. Choose a line bundle $L$ of nonzero degree. Then
$\rk _S L\otimes M\neq 0$ (Proposition \ref{rank of tensor prod}). Hence
$$0\neq h^1(L\otimes M)=\Ext^1(L^{-1},M)$$
by part (3) of Theorem \ref{thm summary on cohomology},  part (2) of Lemma \ref{lemma useful later 1} and
Lemma \ref{assume lemma}.
\end{proof}





\end{document}